\definecolor{notefontcolor}{rgb}{0.800781, 0.800781, 0.800781}
\definecolor{grey30}{rgb}{0.7,0.7,0.7}
\numberwithin{equation}{section}
\theoremstyle{plain}
\newtheorem{theorem}{Theorem}[section]
\newtheorem{lemma}[theorem]{Lemma}
\newtheorem{proposition}[theorem]{Proposition}
\theoremstyle{remark}
\newtheorem{remark}[theorem]{Remark}
\newcommand{\1}{\boldsymbol 1}
\newcommand{\N}{{\mathbb{N}}}
\newcommand{\Z}{{\mathbb{Z}}}
\DeclareMathOperator{\Vol}{Vol}
\DeclareMathOperator{\sgn}{sgn}
\newcommand{\radconv}{r}
\newcommand{\e}{\varepsilon}
\newcommand{\R}{{\mathbb{R}}}
\renewcommand{\P}{{\mathbb P}}
\newcommand{\E}{\mathbb E}
\newcommand{\dd}{\,\mathrm d}     
\newcommand{\kE}{{\cal E}}
\newcommand{\lf}{\lfloor}
\newcommand{\rf}{\rfloor}
\def\ben#1{\begin{equation}#1\end{equation}}
\def\al#1{\begin{align*}#1\end{align*}}
\def\aln#1{\begin{align}#1\end{align}}
\newcommand{\GOE}{{\mathrm{GOE}}}
\newcommand{\Id}{{\mathbb{I}}}
\newcommand{\sph}{{\mathrm{sp}}}
\newcommand{\dr}{\partial_r}
\newcommand{\dN}{\dr}
\newcommand{\Max}{{\mathrm{max}}}
\definecolor{mypink1}{rgb}{0.858, 0.188, 0.478}
\newcommand{\xione}{\boldsymbol{\xi}}
\newcommand{\xipone}{\boldsymbol{\xi}'}
\newcommand{\xippone}{\boldsymbol{\xi}''}
\newcommand{\uN}{\mathbf{u}_N}
\begin{document}

\title{Triviality of the geometry of mixed $p$-spin spherical Hamiltonians with
    external field}
\author{David Belius, Jiří Černý, Shuta Nakajima, Marius Schmidt}

\date{\normalsize Department of Mathematics and Computer Science, University of Basel}

\maketitle

\begin{abstract}
  We study isotropic Gaussian random fields on the high-dimensional
  sphere with an added deterministic linear term, also known as mixed $p$-spin
  Hamiltonians with external field. We prove that if the external field
  is sufficiently strong, then the resulting function has trivial
  geometry, that is only two critical points. This contrasts with the
  situation of no or weak external field where these functions typically
  have an exponential number of critical points. We give an explicit
  threshold $h_c$ for the magnitude of the external field necessary for
  trivialization and conjecture $h_c$ to be sharp. The Kac-Rice formula
  is our main tool. Our work extends \cite{fyodorov2013high}, which
  identified the trivial regime for the special case of pure $p$-spin
  Hamiltonians with \emph{random} external field.
\end{abstract}

\section{Introduction}

Isotropic Gaussian random fields on the sphere are paradigmatic high
dimensional complex functions. Due to their appearance in \emph{spin
  glass} models in statistical physics, they are also known as mixed $p$-spin
spherical Hamiltonians. One manifestation of the complexity is the
presence, in general, of an exponentially large number of critical points
(this has been proven for the special case of \emph{pure $p$-spin
    Hamiltonians}
  \cite{fyodorov2013high,auffinger2013random,subag2017complexity} and
  their perturbations \cite{AB13,arous2020geometry} and is expected to be
  generic beyond these special cases). In this paper, we prove that in
the presence of a deterministic linear term (external field in the
  physics terminology) with strength above a certain threshold, the
geometry of such functions trivializes in the sense that the only
critical points of these random function are one maximum and one
minimum. This extends \cite{fyodorov2014topology} which exhibited the trivialization phenomenon for pure $2$-spin Hamiltonians, and \cite{fyodorov2013high} which identified
the trivial regime for pure $p$-spin Hamiltonians
with \emph{random} external field, and makes mathematically rigorous part
of the results of \cite{ros2019complex} which demonstrated triviality for pure
$p$-spin Hamiltonians with deterministic external field using physics
methods. Our result proves trivialization for any mixed $p$-spin spherical Hamiltonian, which includes pure $p$-spin Hamiltonians as a special case, as well as Hamiltonians with a Gaussian random external field (see the discussion below Theorem \ref{thm:sec_thm}).
We further characterize the energies and other properties of the unique maximizer
and minimizer.

We now introduce our model. Let $\xi$ be a series
\begin{equation}\label{eq:xi_def}
  \xi\left(x\right)=\sum_{p\ge1}a_{p}x^{p},\qquad a_p\ge0,
\end{equation}
with radius of convergence $\radconv >1$, such that $a_p >0$ for at least
one $p\ge 2$. Let $H_{N}$ be a centered Gaussian process (the
  Hamiltonian) on the open ball in $\mathbb R^N$ with radius
$\sqrt{\radconv}$ whose covariance is given by
\begin{equation}\label{eq:covariance_H_N}
  \E \big[H_{N}(\sigma)H_{N}(\sigma')\big]
  =N\xi(\sigma\cdot\sigma'),
  \qquad |\sigma|, |\sigma'| < \sqrt{\radconv}.
\end{equation}
We
are mostly interested in the behavior of the Hamiltonian $H_N$ restricted
to the unit sphere
$S_{N-1}=\left\{ \sigma \in \mathbb R^N:\left|\sigma\right|=1\right\}$.

Note that any covariance function of an isotropic Gaussian random field
on the sphere must depend only on the scalar product $\sigma\cdot\sigma'$,
and thus take the form $\xi(\sigma \cdot \sigma')$ for some function $\xi$.
By Schoenberg's theorem \cite{schoenberg1942positive}, the only such $\xi$
that give well-defined covariances on $S_{N-1}$ for all $N$ are those of
the form \eqref{eq:xi_def}. They thus represent a very general class of
covariances of isotropic random Gaussian fields on the sphere. If
$\xi\left(x\right)=a_{p}x^{p}$ for some $p\ge 2$, then we call $H_N$ a
pure $p$-spin Hamiltonian.

For $h\ge0$ and a deterministic sequence $\uN\in S_{N-1}$,  we consider
the Hamiltonian with external field $h\uN$
\begin{equation}\label{eq:Hh}
  H_{N}^{h}(\sigma)=H_{N}(\sigma)+Nh\uN\cdot\sigma.
\end{equation}
A critical point of $H_N^h$ on $S_{N-1}$ is a $\sigma \in S_{N-1}$ such that
\begin{equation*}
  \nabla_{\sph} H_N^h(\sigma)=0,
\end{equation*}
where $\nabla_{\sph}$ denotes the gradient in the spherical metric (that
  is the standard gradient projected on the tangent space of $S_{N-1}$ at
  $\sigma$). We further use $\dr H^h_N(\sigma )$ to denote the radial
derivative of $H_N^h$ at $\sigma $, $\nabla_{\sph}^{2}H_N^h(\sigma)$ the
spherical Hessian, and $\lambda_{\max}(\nabla_{\sph}^{2}H_{N}^h(\sigma ))$
its largest eigenvalue. Using the shorthand notation
  $\xione=\xi(1)$, $\xipone=\xi'(1)$, $\xippone=\xi''(1)$, our main
  result shows that the function $H_{N}^{h}\left(\sigma\right)$
  trivializes for $h^2 > \xippone - \xipone$
  and gives formulas describing the properties of this function
    at its unique maximizer.
\begin{theorem}
  \label{thm:main_thm}
  If $h^2 > \xippone-\xipone$, then
  \begin{equation}\label{eq:triviality}
    \lim_{N\to\infty}\mathbb{P}\left(\begin{array}{c}
        \text{The only critical points of }H_{N}^{h}\text{ are }\\
        \text{one maximum and one minimum}
    \end{array}\right)=1
  \end{equation}
  and, letting $\sigma^{*}$ be the global maximum of $H_{N}^{h}$
  \begin{align}
    \lim_{N\to\infty}
    \frac{1}{N}H_{N}^h(\sigma^*)
    &=\sqrt{\xipone+h^2},
    \label{eq:main_res_GS_energy}
    \\
    \lim_{N\to\infty}\sigma^{*}\cdot{\mathbf u}_{N}
    &= \frac{h}{\sqrt{\xipone+h^2}},
    \label{eq:main_res_overlap_with_ext_field}
    \\
    \lim_{N\to\infty}\frac{1}{N} \dr H_N^h(\sigma^*)
    &= \frac{\xipone+\xippone+h^2}{\sqrt{\xipone+h^2}},
    \label{eq:rad_deriv}
    \\
    \lim_{N\to\infty}\lambda_{\Max} (\nabla^2_{\sph} H_N^h(\sigma^*))
    & = 2\sqrt{\xippone} - \frac{\xipone+\xippone+h^2}{\sqrt{\xipone+h^2}},
    \label{eq:eval_hessian}
  \end{align}
  where the limits are in probability.
\end{theorem}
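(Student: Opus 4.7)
The plan is to use the Kac--Rice formula to count critical points of $H_N^h$ on $S_{N-1}$, tracked jointly with the observables $m=\sigma\cdot\uN$, $e=H_N^h(\sigma)/N$, $d=\dr H_N^h(\sigma)/N$, and $\lambda=\lambda_{\Max}(\nabla_{\sph}^2 H_N^h(\sigma))$. Because $\#\{\text{critical points}\}\ge 2$ always by compactness of $S_{N-1}$, it suffices to show that $\E[\#\{\text{critical points}\}]\to2$ and that the expected count of critical points whose observables lie outside a small neighborhood of the predicted extremal values vanishes. Triviality \eqref{eq:triviality} then follows from Markov's inequality applied to the non-negative random variable $\#\{\text{critical points}\}-2$, and \eqref{eq:main_res_GS_energy}--\eqref{eq:eval_hessian} follow from the concentration at the predicted values.

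I first write the expected count with observables in a Borel set as a Kac--Rice integral over $S_{N-1}$. Invariance of the law of $H_N$ under the subgroup of $O(N)$ fixing $\uN$ makes the integrand depend on $\sigma$ only through $m$, reducing the computation to a one-dimensional integral over $m\in[-1,1]$ weighted by the slice volume $(1-m^2)^{(N-3)/2}$. A direct calculation with the isotropic covariance \eqref{eq:covariance_H_N} produces the joint Gaussian distribution of $(H_N^h,\dr H_N^h,\nabla_{\sph} H_N^h,\nabla_{\sph}^2 H_N^h)$ at $\sigma$; conditional on the first three components the spherical Hessian is a rescaled GOE matrix (with a rank-one correction along the $\uN$-projection) shifted by $-\dr H_N^h(\sigma)\,I$. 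Applying Ben~Arous--Dembo--Guionnet-type large deviations for $\tfrac1N\log|\det|$ of a shifted GOE matrix and combining with the explicit Gaussian densities identifies an exponential rate $\Sigma(m,e,d,\lambda)$ for the Kac--Rice integrand, and Laplace's method in $m$ then yields the complexity $\Sigma^*(m)=\sup_{e,d,\lambda}\Sigma(m,e,d,\lambda)$ governing the expected count.

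The crux is the variational step: showing that under $h^2>\xippone-\xipone$, $\Sigma(m,e,d,\lambda)\le0$ with equality at exactly two configurations — one corresponding to the unique maximum, with $\lambda<0$ and $(m,e,d,\lambda)=(m^*,e^*,d^*,\lambda^*)$ matching \eqref{eq:main_res_overlap_with_ext_field}--\eqref{eq:eval_hessian} for $m^*=h/\sqrt{\xipone+h^2}$, and one analogous configuration for the unique minimum. Strict negativity of $\Sigma^*$ away from these two configurations demands a delicate cancellation between the log-determinant rate, the quadratic Gaussian exponent coming from the density of $(\nabla_{\sph} H_N^h,\dr H_N^h,H_N^h)$, and the log-volume $\tfrac12\log(1-m^2)$.

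The main obstacle will be executing this variational analysis. A BBP-type case split is required, depending on whether the shift in the spherical Hessian lies inside or outside the limiting GOE semicircle support $[-2\sqrt{\xippone},2\sqrt{\xippone}]$: in the outside regime the log-determinant rate simplifies to the explicit integral of the shifted semicircle law, and, more importantly for the theorem, the top Hessian eigenvalue decouples from the bulk and equals $2\sqrt{\xippone}-d$, giving \eqref{eq:eval_hessian} at the maximizer. Verifying that the various contributions to $\Sigma$ balance so as to give $\Sigma^*\le0$ uniformly on $[-1,1]$, with the precise trivialization threshold emerging as $h^2=\xippone-\xipone$, is where the bulk of the technical work will sit; once this is established, the triviality and observable localisation claims follow from the Markov-type argument outlined above.
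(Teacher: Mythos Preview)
Your overall architecture matches the paper's: Kac--Rice, reduce by rotational invariance to an integral over the overlap $m$, optimize the exponential rate, then localize the observables via Markov's inequality. There are, however, two concrete issues.

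First, your description of the conditional law of the spherical Hessian is slightly off and, more importantly, your stated tools are too coarse for the conclusion you want. The restricted Euclidean Hessian $\nabla^2 H_N(\sigma)|_{\sph}$ is \emph{exactly} a scaled $\GOE_{N-1}$ and is \emph{independent} of $(H_N,\dr H_N,\nabla_{\sph} H_N)$; there is no rank--one correction. More seriously, Ben~Arous--Dembo--Guionnet large deviations for $\tfrac1N\log|\det|$ only give the exponential rate of the integrand, which yields $\tfrac1N\log\E[\mathcal N_N]\to 0$, not $\E[\mathcal N_N]\to 2$. To obtain the latter you need the exact identity
\[
\E\big[|\det(x\,\Id_{N-1}+\GOE_{N-1}(N^{-1}))|\big]=\sqrt{2}\,N^{-(N-2)/2}\Gamma(N/2)\,e^{Nx^2/2}\rho_N(x),
\]
a uniform asymptotic for the averaged spectral density $\rho_N$ outside the bulk, and a second--order Laplace analysis in which the Hessian determinant of the rate function at the maximizer, the Gamma prefactors, and the $\rho_N$ prefactors all cancel to produce the constant $2$. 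Nothing in your outline addresses this sub-exponential layer, so as written the argument cannot reach the triviality statement $\E[\mathcal N_N]\to 2$.

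Second, tracking $\lambda=\lambda_{\Max}(\nabla^2_{\sph}H_N^h)$ inside the Kac--Rice integrand is not what the paper does and is harder than necessary: you would need the joint behavior of $|\det|$ and the top eigenvalue, not just their marginals. The paper instead first proves \eqref{eq:rad_deriv} (localizing $d=N^{-1}\dr H_N^h(\sigma^*)$ near $\eta_*\sqrt{2\xippone}$), and then, since the Hessian is a shifted GOE with shift now pinned outside the bulk, splits off the top-eigenvalue indicator via Cauchy--Schwarz: one factor is controlled by the squared-determinant estimate $\E[|\det|^2]\le e^{\delta N}\E[|\det|]^2$ for shifts outside the bulk, and the other by the GOE top-eigenvalue LDP. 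Your BBP intuition is correct for the limiting value of $\lambda$, but the route you sketch would require substantially more random-matrix input than the paper's argument.
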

If $\xippone < \xipone$, then the conclusions hold for any
  $h\ge0$. On the other hand, if $\xippone \ge \xipone$, then the condition
  $h^2 > \xippone - \xipone$ is equivalent to $h > h_c$, where we define
  the threshold $h_c$ by
\begin{equation}\label{eq:hc}
  h_{c}=\sqrt{\xippone-\xipone}.
\end{equation}
Note that $\xippone \ge \xipone$ holds in particular if $a_1=0$, that is, if there is no random external field (see the discussion below
  Theorem~\ref{thm:sec_thm}).

The main step in proving Theorem~\ref{thm:main_thm} is a precise control
of the asymptotic behaviour of the expected number of critical points of
$H_N^h$ using the Kac-Rice formula, stated here as our second main result.
\begin{theorem}
  \label{thm:sec_thm}
  Let $\mathcal N_N$ be the number of critical points of $H^h_N$,
  \begin{equation}
    \label{eqn:NN}
    \mathcal N_N =
    \big|\{\sigma \in S_{N-1}: \nabla_{\sph} H_N^h (\sigma ) = 0\}\big|.
  \end{equation}
  \begin{enumerate}[(i)]
    \item If $h^2 >\xippone-\xipone$, then
    \begin{equation}\label{eq:expNtriv}
      \lim_{N\to\infty}\E \left[\mathcal{N}_N\right] = 2.
    \end{equation}
    \item If $h^2 \le \xippone - \xipone $
      (in the case $\xippone \ge \xipone$), then
    \begin{equation}
      \label{eq:main_res_triv2}
      \lim_{N\to\infty}
      \frac{1}{N}\ln\E[\mathcal{N}_{N}]=
      \begin{cases}
        \frac{1}{2}(\frac{h^2 }{h_c^2}-1-\ln{\frac{h^2}{h_c^2}}),
        \qquad& \text{if } h/h_c \in \left(\sqrt{\frac{ \xipone}{\xippone}},1\right],
        \\ \frac{1}{2}\ln{ \frac{ \xippone}{\xipone}}-\frac{h^2}{2\xipone},
        & \text{if } h/h_c \in \left[0,\sqrt{\frac{
              \xipone}{\xippone}}\right].
      \end{cases}
    \end{equation}
  \end{enumerate}
\end{theorem}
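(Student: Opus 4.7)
}
I would apply the Kac-Rice formula for the expected number of critical points and exploit the rotational symmetry of the law of $H_N$ around $\uN$ to collapse the surface integral over $S_{N-1}$ to a one-dimensional integral in the overlap $m = \sigma \cdot \uN$:
\begin{equation*}
\E[\mathcal{N}_N] = |S_{N-2}| \int_{-1}^{1} (1-m^2)^{(N-3)/2}\, F_N(m)\, dm,
\end{equation*}
where for any fixed $\sigma$ with $\sigma \cdot \uN = m$,
\begin{equation*}
F_N(m) = \E\bigl[|\det \nabla^2_{\sph} H_N^h(\sigma)| \bigm| \nabla_{\sph} H_N^h(\sigma) = 0\bigr]\, p_{\nabla_{\sph} H_N^h(\sigma)}(0).
\end{equation*}
At such $\sigma$, the spherical gradient is Gaussian on the tangent space with mean $hN(\uN - m\sigma)$ (of norm $hN\sqrt{1-m^2}$) and isotropic covariance $N\xipone P_\sigma$, so its density at the origin contributes the factor $(2\pi N\xipone)^{-(N-1)/2}\exp\bigl(-Nh^2(1-m^2)/(2\xipone)\bigr)$.

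Using the standard decomposition $\nabla^2_{\sph} H_N^h(\sigma) = [P_\sigma \nabla^2 H_N(\sigma) P_\sigma]|_{T_\sigma} - (\dr H_N(\sigma) + Nhm)\,\mathrm{Id}_{T_\sigma}$, the independence (by isotropy) of $\dr H_N(\sigma)$, $\nabla_{\sph} H_N(\sigma)$ and the bracketed block, and the fact that the last is an $(N-1)\times(N-1)$ GOE-type matrix with variance proportional to $\xippone$, the conditional expectation reduces to a Gaussian integral over $\dr H_N(\sigma)$ of expected absolute determinants of shifted GOE matrices. The latter have precise large-$N$ asymptotics via the logarithmic potential of the semicircle law (cf.\ \cite{auffinger2013random}), giving $F_N(m) = \exp(N\Psi(m) + o(N))$ for an explicit $\Psi$. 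Laplace's method then yields $\tfrac{1}{N}\ln\E[\mathcal{N}_N] = \max_{m \in [-1,1]} \Phi(m) + o(1)$ with
\begin{equation*}
\Phi(m) = \tfrac{1}{2}\ln(1-m^2) - \tfrac{h^2(1-m^2)}{2\xipone} + \Psi(m).
\end{equation*}

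For part (ii), I would solve this variational problem explicitly. Two regimes emerge according to whether the optimizing shift inside the GOE determinant lies inside the semicircle support $[-2\sqrt{\xippone}, 2\sqrt{\xippone}]$ (smooth regime, active for $h/h_c \le \sqrt{\xipone/\xippone}$, producing $\tfrac{1}{2}\ln(\xippone/\xipone) - h^2/(2\xipone)$), or outside it, where the large-deviation tail of the top eigenvalue contributes (active for $h/h_c \in (\sqrt{\xipone/\xippone},1]$, producing $\tfrac{1}{2}(h^2/h_c^2 - 1 - \ln(h^2/h_c^2))$); one checks that the two expressions agree at the crossover $h/h_c = \sqrt{\xipone/\xippone}$.

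For part (i), the same analysis gives $\Phi(m) < 0$ strictly on the interior of $[-1,1]$ when $h^2 > \xippone - \xipone$, with $\max \Phi = 0$ attained only at the endpoints $m = \pm 1$, so the bulk of the integral is exponentially small. To extract the sharp limit $2$ rather than merely $e^{o(N)}$, I would replace the bulk Laplace estimate by a local analysis near each endpoint: rescaling $1 - m^2 = \rho/N$, the degenerating surface-measure factor combines with the Gaussian gradient density (whose mean also vanishes at $m = \pm 1$) and the Hessian determinant to produce an $N$-independent integral evaluating to exactly $1$ per endpoint, matching the unique maximum and minimum predicted by Theorem~\ref{thm:main_thm}. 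The main obstacle is precisely this sub-exponential precision in the trivial regime: it requires sharp (not merely logarithmic-scale) asymptotics of the shifted-GOE expected determinant uniformly as the shift approaches the spectrum edge, together with a coordinated treatment of the simultaneous degenerations of the volume element and of the gradient density at the endpoints.
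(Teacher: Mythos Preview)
Your outline for part~(ii) is essentially correct and parallels the paper's approach: reduce via Kac--Rice to a Laplace-type integral, identify two regimes according to whether the effective GOE shift lies inside or outside the semicircle support, and recover the stated formulas. The paper organizes the computation slightly differently---it keeps both the overlap $\gamma$ and the radial-derivative value $x$ as integration variables and optimizes the two-variable rate $F(x,\gamma)$ jointly (Proposition~\ref{pro:maxF})---but this is equivalent to your iterated Laplace (first in $x$, then in $m$).

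For part~(i), however, there is a genuine error in where you locate the dominant contribution. You assert that in the trivial regime $h^2 > \xippone - \xipone$ the rate function $\Phi(m)$ is strictly negative on the interior of $[-1,1]$ and attains its maximum $0$ only at the endpoints $m = \pm 1$, and you then propose a rescaled endpoint analysis $1 - m^2 = \rho/N$ to extract the constant $2$. This cannot work: the term $\tfrac{1}{2}\ln(1-m^2)$ in your own expression for $\Phi(m)$ forces $\Phi(m) \to -\infty$ as $m \to \pm 1$, so the endpoints contribute nothing. What actually happens (Proposition~\ref{pro:maxF}(i)) is that the total exponential rate vanishes at two \emph{interior} points $\pm(x_*,\gamma_*)$ with
\[
\gamma_* = \frac{h}{\sqrt{\xipone + h^2}} \in (0,1),
\]
and is strictly negative elsewhere. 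The sharp constant $2$ is then obtained not by an endpoint rescaling but by a standard second-order Laplace expansion around each of these two interior maximizers: one needs the precise (not merely logarithmic) asymptotic \eqref{rhoN asympt} for $\rho_N$ outside the bulk, and an explicit computation of the $2\times 2$ Hessian $\nabla^2 F(x_*,\gamma_*)$. The resulting algebraic factors cancel to give exactly $1$ per maximizer (Section~\ref{complexity_triviality}). Your proposed endpoint mechanism does not produce the answer, and the actual computation---an interior Gaussian saddle-point with a delicate determinant evaluation---is of a different nature than you describe.
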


Observe that the triviality \eqref{eq:triviality} directly follows from
\eqref{eq:expNtriv} and Markov's inequality, since any differentiable
function on the sphere has at least two critical points, one global
maximum and one global minimum. The result \eqref{eq:main_res_triv2} also
gives the exponential rate of the expectation for $h < h_{c}$.

Note also that if $\xippone \ge \xipone$ (cf.~\eqref{eq:hc}), then the
right-hand side of \eqref{eq:eval_hessian} equals
$2\sqrt{\xippone} -  \frac{2\xippone+h^2-h_c^2}{\sqrt{\xippone+h^2-h_c^2}}$
and thus tends to zero as $h\downarrow h_{c}$, showing that the unique local
maximum becomes increasingly flat as the external field approaches the
critical value $h_{c}$ from above. Furthermore since $H_{N}$ and $-H_{N}$
are identical in law, statements similar to
\eqref{eq:main_res_GS_energy}--\eqref{eq:eval_hessian} for the unique
minimum follow, with the obvious change of sign.

When $a_1=0$, our claim
\eqref{eq:main_res_GS_energy} on the energy of the unique global maximum
coincides with (13) of Proposition 1 in \cite{chen2017parisi}. Our paper
  thus provides an alternative proof of this result. The method of
\cite{chen2017parisi} is very different, in that it uses the Parisi
formula to derive a general formula for
$\lim_{N\to\infty} \frac{1}{N}H_{N}^{h}\left(\sigma^{*}\right)$ (known as
  the \emph{ground state energy}), which is shown to simplify to the
right-hand side of \eqref{eq:main_res_GS_energy} when $h > h_c$. Using
this and a further approach the mathematically non-rigorous work
\cite{ros2019complex} argues for triviality precisely when $h>h_c$ in the
special case of pure $p$-spin Hamiltonians with deterministic external
field.

Fyodorov \cite{fyodorov2013high} proves \eqref{eq:expNtriv} (and thus
  \eqref{eq:triviality}) for pure $p$-spin Hamiltonians with \emph{random
  Gaussian} external field, that is for Hamiltonians of the form
$\tilde H_N(\sigma ) = H_N(\sigma) + h ( U_N \cdot\sigma)$, where $H_N$
is as above, and  where $ U_N$ is a centered Gaussian random vector in
$\mathbb R^N$ whose covariance is $N$ times the identity matrix, and
which is independent of $H_N$. The covariance of $\tilde H_N$ is then
$\mathbb E[ \tilde H_N(\sigma )\tilde H_N(\sigma' )]=N\tilde \xi (\sigma \cdot \sigma ')$
for $\tilde{\xi}(x)=h^2 x + \xi(x)$. Thus, since we allow $a_1>0$ in
\eqref{eq:covariance_H_N}, our results also cover the case of random
external field, or a combination of random and deterministic external
fields.

From the first mathematically rigorous uses of the Kac-Rice formula for spin glass
Hamiltonians in  \cite{fyodorov2004complexity, fyodorov2012, fyodorov2013high, auffinger2013random}
it has become a widely used tool in this context.
The work \cite{subag2017complexity} used it to compute the second moment
of $\mathcal{N}_N$ to obtain concentration of $\mathcal{N}_N$ for $h=0$
and $H_N$ a pure $p$-spin Hamiltonian (and in \cite{arous2020geometry}
  for perturbations thereof). The work \cite{fan2018tap} used it to count
so called TAP solutions, and \cite{subag2017geometry,arous2020geometry}
to compute free energies and study the Gibbs measure of certain
Hamiltonians. Furthermore \cite{arous2019landscape} used the Kac-Rice
formula for the similar problem of studying the \emph{complexity} (number
  of critical points at exponential scale)  of pure $p$-spin Hamiltonians
with a deterministic term of polynomial degree $p$.

In our proof, we follow Fyodorov \cite{fyodorov2013high} in using the
Kac-Rice formula to compute $\E [\mathcal{N}_N]$ and exploiting that the expected determinant of a shifted GOE matrix can be computed very precisely (see Lemmas \ref{lem:detGOE}, \ref{lem:rhoapprox}). Our
proof diverges from \cite{fyodorov2013high} in that all our computations
are for general $\xi$ rather than the pure $p$-spin covariance function $\xi(x)=x^p$, and, more importantly,
because when considering a deterministic external field one obtains from
the Kac-Rice formula an integral over two rather than one variables. To
find the asymptotic of the integral one must thus find explicit formulas
for the maximizers of a function of $\mathbb{R}^2$ rather than as in
\cite{fyodorov2013high} for a function of $\mathbb{R}$ (see
  Section~\ref{sec:opti}). The extra variable corresponds to the inner
product with the deterministic external field, whereas with random
external field the only variable of integration corresponds to the radial
derivative.

Though we do not prove it, there is a good reason to believe that the
threshold $h_c$ is sharp for the triviality \eqref{eq:triviality},
\eqref{eq:expNtriv}: Indeed \cite{chen2017parisi} shows that this is
precisely the threshold for the minimizer of their Parisi formula for the
ground state to be ``replica symmetric'', and replica calculations of
\cite{ros2019complex} demonstrate using physics methods that for
$h < h_{c}$ the quenched complexity
$\lim_{N\to\infty}\frac{1}{N}\ln\mathcal{N}_N$ is positive in the special
case of a pure $p$-spin Hamiltonian (but smaller than the right-hand side
  of \eqref{eq:main_res_triv2}, i.e. the ``quenched'' and ``annealed''
  averages do not coincide in the physics terminology).

Our work is a step on the way towards rigorously determining the
complexity of critical points for mixed $p$-spin Hamiltonians in general.
It would furthermore be interesting to investigate the ``physical''
consequences for the Gibbs measure of the triviality of the Hamiltonian.




\paragraph{Structure of paper}

In Section~\ref{sec:prelim}, we introduce notation and recall some
results on random matrices. In~Section \ref{sec:exact}, we derive an
exact and essentially explicit formula for the mean number of critical
points of $H^h_N$ on the sphere. To this end, we employ the Kac-Rice
formula which in our setting reads (see
  e.g.~\cite[(12.1.4)]{adler2009random})
\begin{equation}
  \label{eq:KacRice}
  \mathbb{E}\left[\mathcal{N}_{N}\right]
  =  \int_{S_{N-1}} \E\left[\left|
    \det \nabla^2 H^{h}_N(\sigma)
    \right|\,\Big|\,
    \nabla H^{h}_N(\sigma)=0 \right]
  f_{\nabla_\sph
    H^h_N(\sigma)}(0)\,
  \dd \sigma,
\end{equation}
where $\dd\sigma$ is the area element on $S_{N-1}$ and
where $f_{\nabla_\sph H^h_N(\sigma)}$ is the density of
$\nabla_\sph H^h_N(\sigma)$. We also use a slightly more general version
restricting energy, radial derivative $x$, and overlap
  $\gamma $, with the external field
to an arbitrary measurable set. The upshot is an estimate of the form
\begin{equation*}
  \mathbb{E}\left[\mathcal{N}_{N}\right]
  = e^{o(N)} \int_{[-1,1] \times\R}
  \exp{\left(N F(x,\gamma)\right)}\dd \gamma  \dd x ,
\end{equation*}
where $F$ is defined in \eqref{eq:defF} and a precise asymptotic for the
term $e^{o(N)}$ is also provided. From this it is clear that the
asymptotic behaviour of $\E [\mathcal{N}_N]$ is closely connected to the
maximizers of $F$. Section \ref{sec:opti} is devoted to the explicit
computation of these maximizers via the solution of the critical point
equations for $F$. We will see that their behavior is different for
$h< h_c$ and $h> h_c$, see Proposition \ref{pro:maxF}. Knowledge of the
maximizers will allow us to verify \eqref{eq:main_res_triv2}, as well as
a weaker version of \eqref{eq:expNtriv}, namely that for $h>h_c$
\begin{equation*}
  \lim_{N\to\infty}N^{-1} \ln \mathbb{E}\left[\mathcal{N}_{N}\right]
  =0.
\end{equation*}
A detailed analysis of the subexponential contributions is conducted in
Section \ref{complexity_triviality} culminating in the proof of
\eqref{eq:expNtriv}. In Section \ref{sec:maxchar}, the claims
\eqref{eq:main_res_GS_energy}--\eqref{eq:eval_hessian} are proved using that any but the given energy,
radial derivative and overlap with external field have exponentially decaying
mean number of critical points, which implies the claims by Markov's
inequality.

\section{Preliminaries}
\label{sec:prelim}

In this section we introduce the notation that is used throughout the
paper and state few important results used in the proof of
Theorems~\ref{thm:main_thm}-\ref{thm:sec_thm}.

When considering the Hamiltonian and its
derivatives at a given $\sigma \in S_{N-1}$, we always express them in
the orthonormal basis $(\mathbf{e}_{i}(\sigma ))_{i=1}^N$ of $\mathbb R^N$
which is fixed so that $\mathbf{e}_N(\sigma)=\sigma$ and the vector
$\mathbf u_N$ lies in the plane spanned by $\mathbf e_1(\sigma )$ and
$\mathbf e_N(\sigma )$. Then $(\mathbf{e}_i(\sigma))_{i=1}^{N-1}$
is a basis for the tangent space of $S_{N-1}$ at $\sigma$. For a
sufficiently smooth function $f: \mathbb R^N\to \mathbb R$, we use
$\partial_i f(\sigma )$ to denote the standard derivative of $f$ in the
direction $\mathbf e_i(\sigma )$ at the point $\sigma$,
$\nabla f = (\partial_i f)_{i=1}^N$ stands for its Euclidean gradient,
and $\nabla^2 f(\sigma) = (\partial^2_{ij} f(\sigma ))_{i,j =1}^N$ for
its Euclidean Hessian. In this basis, $\partial_N f(\sigma)$ coincides
with
the radial derivative $\partial_r f(\sigma)$, the spherical gradient
$\nabla_\sph f (\sigma )$ is the restriction of the usual gradient to the
first $N-1$ coordinates,
\begin{equation}
  \label{eq:covgrad}
  \nabla_\sph f(\sigma ) = (\partial_i f (\sigma ))_{i=1}^{N-1},
\end{equation}
and the spherical Hessian satisfies
\begin{equation}
  \label{eq:covHess}
  \nabla^2_\sph f = \nabla^2 f \big|_\sph - \dN f \,\Id_{N-1}
  = (\partial^2_{ij} f - \delta_{ij} \dN f)_{i,j=1}^{N-1},
\end{equation}
where $\Id_N$ stands for the $N\times N$ identity matrix,
$\nabla^2 f |_\sph$ is the top left $(N-1)\times (N-1)$ submatrix
 of $\nabla^2 f$
and $\delta_{ij}$ is the Kronecker symbol.

If $\xi(x)$ has radius of convergence $r$ greater than one, then
$H_N(\sigma)$ is almost surely a smooth function on the open ball
$\{ \sigma: |\sigma| < \sqrt{\radconv} \} \subset \mathbb{R}^N$, so we
may speak of its Euclidean and spherical derivatives.

We write $a_n\sim b_n$ if $a_n/b_n\to 1$ as $n\to \infty$. For a random
variable $X$, we use $f_X$ to denote its density, if it exists.

For the evaluation of the determinant appearing in the Kac-Rice formula,
we will need few facts about GOE random matrices. Given $a>0$ and
$N \in \N$, we use  $\GOE_{N}(a)$ to denote a $N\times N$ symmetric
random matrix whose entries $A_{ij}$, $1\le i\le j\le N$, are independent
normal random variables with mean $0$ and variance
\begin{equation}
  \E A_{ij}^2=\frac{(1+\delta_{ij})\,a}{2}.
\end{equation}
We write $\lambda^{N,a}_1\geq \cdots\geq \lambda_{N}^{N,a}$ for the
ordered eigenvalues of $\GOE_{N}(a)$, and define the averaged empirical
spectral measure by
\begin{equation}
  \label{eq:empirical_measure}
  \mu_{N,a}(A)=N^{-1}\sum_{i=1}^N\P[\lambda_i^{N,a} \in A],\qquad A\in \mathcal B( \R).
\end{equation}
The density of $\mu_{N,a}$ is denoted by $\rho_{N,a}$, and we introduce
$\rho_N := \rho_{N,N^{-1}}$ as a convenient abbreviation. It is
well-known that  $\rho_N(x)$ converges to
$\frac{1}{2\pi }{\sqrt{2-x^2}}\mathbf{1}_{|x|\leq \sqrt{2}}$ as $N\to\infty$,
see e.g.~\cite[(7.2.31)]{mehta2004random}.

We will need the following identity for the determinant of a shifted $\GOE_{N-1}(N^{-1})$.

\begin{lemma}
  \label{lem:detGOE}
  For any $x\in\mathbb{R}$,
  \begin{equation}
    \label{determinant-computation}
    \E\big[\big|\det(x \Id_{N-1}+\GOE_{N-1}(N^{-1}))\big|\big]
      =\sqrt{2}\, N^{-(N-2)/2}\,
      \Gamma\left(\frac{N}{2}\right)e^{Nx^2/2}\rho_{N}(x).
  \end{equation}
\end{lemma}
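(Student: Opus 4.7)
The identity is a classical relation between the expected absolute characteristic polynomial of a $\GOE_{N-1}(N^{-1})$ matrix and the averaged spectral density of a $\GOE_N(N^{-1})$ matrix. My plan is to prove it by direct manipulation of the explicit joint eigenvalue density.

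First, letting $\lambda_1,\ldots,\lambda_{N-1}$ denote the eigenvalues of $\GOE_{N-1}(N^{-1})$, I would write
$$|\det(x\Id_{N-1}+\GOE_{N-1}(N^{-1}))|=\prod_{i=1}^{N-1}|x+\lambda_i|,$$
and, since $\GOE_{N-1}(a)$ is invariant in law under sign flip, the expectation equals $\E\prod_i|x-\lambda_i|$. The symmetric joint eigenvalue density for $\GOE_k(N^{-1})$ is, up to a normalizing constant $C_k$,
$$p_k(\lambda)=C_k^{-1}\prod_{1\le i<j\le k}|\lambda_i-\lambda_j|\exp\Bigl(-\tfrac{N}{2}\sum_{i=1}^k\lambda_i^2\Bigr).$$

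The crucial structural observation is that multiplying the integrand of $\E\prod_i|x-\lambda_i|$ by the Vandermonde $\prod_{1\le i<j\le N-1}|\lambda_i-\lambda_j|$ produces the full Vandermonde of the augmented $N$-tuple $(x,\lambda_1,\ldots,\lambda_{N-1})$. After inserting the missing Gaussian factor $e^{-Nx^2/2}$ and recognizing the result as the symmetric density $p_N$ with one argument fixed at $x$, exchangeability of the eigenvalues yields
$$\rho_N(x)=(C_{N-1}/C_N)\,e^{-Nx^2/2}\,\E\prod_{i=1}^{N-1}|x-\lambda_i|,$$
so the lemma reduces to evaluating $C_N/C_{N-1}$.

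The latter is obtained by the rescaling $\mu_i=\sqrt{N}\lambda_i$, which converts $C_k$ into $N^{-k/2-k(k-1)/4}M_k$, with $M_k=\int\prod_{i<j}|\mu_i-\mu_j|e^{-\sum\mu_i^2/2}d\mu$ Mehta's integral (available in closed form as a product of gamma functions in the $\beta=1$ case). Telescoping the $N$-powers gives a factor $N^{-N/2}$, while Mehta's formula yields $M_N/M_{N-1}=2^{3/2}\Gamma(1+N/2)$; using the duplication $\Gamma(1+N/2)=(N/2)\Gamma(N/2)$ produces the claimed prefactor $\sqrt{2}\,N^{-(N-2)/2}\Gamma(N/2)$. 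The main obstacle is the bookkeeping of these constants, but no step requires anything beyond the Selberg--Mehta integral, and once the structural identity relating $\E\prod|x-\lambda_i|$ to $\rho_N(x)$ is established the rest is routine.
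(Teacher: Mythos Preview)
Your proposal is correct and follows essentially the same approach as the paper: both recognize that $\Delta_{N-1}(\lambda)\prod_i|x-\lambda_i|=\Delta_N(x,\lambda)$, insert the missing Gaussian factor $e^{-Nx^2/2}$ to rebuild the $N$-point density, and then compute the ratio of Selberg--Mehta normalizing constants. The only cosmetic difference is that the paper works with the ordered eigenvalue density and a sum over the position of $x$ (expressed via Dirac deltas), whereas you use the symmetric density and exchangeability directly; your route is slightly more streamlined but the content is identical.
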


We also need precise estimates for $\rho_N(x)$.
\begin{lemma}
  \begin{enumerate}[(i)]
      \item For any $\delta>0$,
      \begin{equation}\label{rhoN asympt}
        \rho_N(x)=
        \frac{\exp{(N\,\Phi(x))}}{2\sqrt{\pi\,N}\,(x^2-2)^{\frac 1 4}
          \,(|x|+\sqrt{x^2-2})^{\frac{1}{2}+o(1)}}
      \end{equation}
      with the error term $o(1)$ converging to zero uniformly
      for $|x|>\sqrt{2}(1+\delta)$, and where
      \begin{equation}
        \label{eq:phi_def}
        \Phi(x)=
        \left(-\frac{|x|\sqrt{x^2-2}}{2}+\ln{\left\{\frac{|x|+\sqrt{x^2-2}}{\sqrt{2}}
        \right\}}\right)\,\1_{\{|x|\geq \sqrt{2}\}} \le 0.
      \end{equation}

      \item
      For any $\e>0$ and large enough $N$, for all $x\in \mathbb R$
      \begin{equation}
        \label{Upper_bound_of_rho_N}
        e^{N\Phi(x)(1+\varepsilon )-N\e}
        \leq \rho_N(x)\leq
        e^{N\Phi(x)(1-\varepsilon )+N\e}.
    \end{equation}
  %
\end{enumerate}
  \label{lem:rhoapprox}
\end{lemma}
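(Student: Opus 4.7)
The plan is to derive (i) from the orthogonal-polynomial representation of $\rho_N$ combined with Plancherel--Rotach asymptotics for Hermite functions, and then to deduce (ii) from (i) by splitting on the location of $x$ relative to the bulk edge.

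For (i), I would first invoke the explicit representation of the averaged spectral density of $\GOE_N(N^{-1})$. After the rescaling $y = \sqrt{N}x$, $\rho_N(x)$ is known to equal $\sqrt{N}$ times a finite sum of squared normalized Hermite functions $\varphi_k(y)$ for $k = 0,\dots,N-1$, plus a boundary correction term specific to the orthogonal ensemble involving $\varphi_{N-1}$ and a $\sgn$-convolution (see e.g.~Mehta, Chapter 7, or Forrester). Next I would apply Plancherel--Rotach asymptotics for $\varphi_k(y)^2$ in the exponential regime $y^2 > 2k$, which yield an explicit exponentially decaying expression with known polynomial prefactor. For $|x| > \sqrt{2}(1+\delta)$ only the summands with $k$ close to $N$ contribute non-negligibly; extracting this dominant contribution, verifying that its exponential rate coincides with $N\Phi(x)$ under $y = \sqrt{N}x$, and reading off the polynomial prefactor would yield \eqref{rhoN asympt}. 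Uniformity in $|x| > \sqrt{2}(1+\delta)$ inherits from the uniformity of Plancherel--Rotach.

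For (ii), I would split into the cases $|x| > \sqrt{2}(1+\delta(\varepsilon))$ and $|x| \le \sqrt{2}(1+\delta(\varepsilon))$ for an appropriate $\delta$ chosen in terms of $\varepsilon$. In the first case, (i) directly produces the bounds: the polynomial prefactors in \eqref{rhoN asympt} are negligible on the exponential scale $e^{\pm N\varepsilon}$, while the factor $(1\pm\varepsilon)$ multiplying $\Phi(x)$ absorbs any remaining slack coming from the $o(1)$ in the exponent. In the second case, $|\Phi(x)| = O(\delta^{3/2})$ while $\rho_N$ is uniformly $O(1)$ by convergence to the semicircle density and edge tightness, so both inequalities reduce to bounds of the form $O(1) \le e^{\pm N\varepsilon}$, which hold for large $N$ after choosing $\delta$ small enough that $N|\Phi(x)|\varepsilon$ does not interfere.

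The main obstacle will be the precise extraction of the polynomial prefactor in (i). Plancherel--Rotach delivers the leading exponential rate $N\Phi(x)$ cleanly, but tracking the subleading factors $(x^2-2)^{-1/4}$ and $(|x|+\sqrt{x^2-2})^{-1/2+o(1)}$ requires careful asymptotic analysis both of the sum over $k$ near $N$ and of the orthogonal-ensemble boundary correction term; while the latter is exponentially smaller than the main contribution for $|x|>\sqrt{2}(1+\delta)$, one must still verify that it does not interfere at the polynomial level, which accounts for the $o(1)$ appearing in the exponent.
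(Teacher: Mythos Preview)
Your overall plan (Hermite-function representation plus Plancherel--Rotach, then a bulk/exterior split) matches the paper's, but there is a genuine error in the analysis of the exterior region $|x|>\sqrt{2}(1+\delta)$: you have the roles of the two pieces of the GOE density reversed. Writing $\rho_N(x)=N^{-1/2}\bigl(A_N(x)+B_N(x)\bigr)$ with $A_N(x)=\sum_{k<N}\varphi_k(\sqrt{N}x)^2$ and $B_N$ the orthogonal-ensemble correction, Plancherel--Rotach gives $\varphi_N(\sqrt{N}x)\asymp e^{N\Phi(x)}$ outside the bulk, hence (via Christoffel--Darboux) $A_N(x)=O\bigl(N^2 e^{2N\Phi(x)}\bigr)$. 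The correction term, by contrast, is of order $\varphi_{N-1}(\sqrt{N}x)$ times a bounded factor, so $B_N(x)\asymp e^{N\Phi(x)}$. Since $\Phi(x)<0$ in this region, it is $A_N$ that is exponentially smaller and $B_N$ that is dominant --- the opposite of what you wrote. In particular, the polynomial prefactor $(x^2-2)^{-1/4}(|x|+\sqrt{x^2-2})^{-1/2}$ in \eqref{rhoN asympt} comes entirely from the correction term, and extracting it requires the asymptotics of $\varphi_{N-1}(\sqrt{N}x)$ (note the index shift from $N$ to $N-1$, which accounts for the $o(1)$ in the exponent) together with the limiting value of the $\sgn$-convolution $J_N$.

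For (ii), the split you describe is right in spirit, but ``$\rho_N$ is uniformly $O(1)$ by convergence to the semicircle density and edge tightness'' does not by itself yield the lower bound $\rho_N(x)\ge e^{-N\varepsilon}$ uniformly near and at the edge, where the semicircle density vanishes. The paper needs a further subdivision of the region $|x|\le\sqrt{2}(1+\delta)$ into several subregimes (bulk, near-edge on each side, and an inner window of width $N^{-4/7}$ around $\sqrt{2}$) and in the innermost window resorts to a separate argument comparing $\rho_N$ at nearby points via the explicit joint eigenvalue density, since Hermite asymptotics alone do not give a clean uniform lower bound there.
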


\begin{remark}
  Lemma~\ref{lem:detGOE} is (38) from \cite{fyodorov2013high}.
  For
  Lemma~\ref{lem:rhoapprox}(i) with pointwise convergence see \cite[(49)]{fyodorov2013high} and
  \cite[(3.11)]{forrester2012spectral}. We give self-contained proofs of both lemmas in Appendix A.
\end{remark}

We record the following easy estimate for $\Phi(x)$ that
  follows directly from \eqref{eq:phi_def}, showing that it grows
  quadratically:
\begin{equation}\label{eq:PhiBound}
  -\frac{x^2}{2} \le \Phi(x)
  \le - \frac{x^2}{2} + c + c'x \text{ for all $x\in\mathbb{R}$, for some constants $c,c'$.}
\end{equation}

\section{Exact formula for the mean number of critical points}
\label{sec:exact}

In this section, we make the first step on the way to prove
Theorems~\ref{thm:main_thm}-\ref{thm:sec_thm}. The main result is
Proposition~\ref{pro:exactN} giving a precise formula for the number of
critical points with certain properties.
The additional properties will be useful later to show \eqref{eq:main_res_GS_energy}--\eqref{eq:eval_hessian} characterizing the maximizer of $H_N^h$.

To state this proposition we need several definitions.
Given measurable sets $\Gamma\subset[-1,1]$ and  $R,E\subset \R$, we
define
\begin{equation}
  \begin{split}
    \mathcal{N}_N&(\Gamma,R,E)
    \\&=\big|\{\sigma\in S_{N-1} :
      \nabla_\sph H^h_N(\sigma)=0,\,
      \sigma\cdot{\uN} \in \Gamma,\,
      N^{-1}\dN H_N(\sigma) \in R,\,
      N^{-1}H_N^h(\sigma) \in E
    \}\big|
  \end{split}
\end{equation}
(note that the radial derivative $\partial_r H_N(\sigma)$ is indeed of the Hamiltonian $H_N(\sigma)$ without external field, and not of $H^h_N(\sigma)$).
For $\gamma\in (-1,1)$, $x\in \mathbb R$, we set
\begin{align}
  \label{eq:Gdef}
  G(x,\gamma)&=\frac{1}{2}\ln{(1-\gamma^2)}+
  \frac{h^2\gamma^2}{2\xipone}-\frac{\,x^2}{2(\xipone+\xippone)}+
  \frac{\left(x + h \gamma \right)^2}{4\xippone},\\
  \label{eq:pxgamma}
  p_{x,\gamma}(E)&=\P\big(N^{-1} H_N^h(\sigma) \in E\,\big |
    \,N^{-1} \dN H_N(\sigma)=x\big),
\end{align}
where in the last formula $\sigma \in S_{N-1}$ is such that
$\sigma \cdot \uN = \gamma $ (it is easy to see from the symmetry of the
  Hamiltonian that the right-hand side depends on $\sigma$ only through $\gamma$).
Finally, we recall the definition of $\rho_N$ from below
\eqref{eq:empirical_measure}.

\begin{proposition}
  \label{pro:exactN}
  For every $N\in \mathbb N$, measurable $\Gamma \subset [-1,1]$  and
  $R,E\subset \mathbb R$,
  \begin{equation*}
    \begin{split}
      &\E\big[\mathcal N_N(\Gamma ,R, E)\big]\\
      &\quad=
      e^{-\frac{Nh^2}{2\xipone}}
      \left(\frac{\xippone}{\xipone}\right)^{\frac{N-1}{2}}
      \frac{2N}{\sqrt{\pi (\xipone+\xippone)}}
      \,\frac{\Gamma(\frac N2)}{\Gamma(\frac{N-1}{2})}
      \int_\Gamma \int_R
      \frac{e^{NG(x,\gamma)}}{(1-\gamma^2)^{{3}/{2}}}\,
      \rho_N\left(\frac{x+h
          \gamma}{\sqrt{2\xippone}} \right)p_{x,\gamma}(E) \dd x
      \dd \gamma.
    \end{split}
  \end{equation*}
\end{proposition}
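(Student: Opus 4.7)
Proof plan for Proposition~\ref{pro:exactN}:

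The starting point is the Kac-Rice formula \eqref{eq:KacRice}, enriched by inserting inside the conditional expectation the indicator of the event $\{\sigma \cdot \uN \in \Gamma,\; N^{-1}\dr H_N(\sigma) \in R,\; N^{-1}H_N^h(\sigma) \in E\}$. The overlap condition is deterministic at fixed $\sigma$ and restricts the outer integration to $\{\sigma : \sigma\cdot \uN\in \Gamma\}$. By isotropy of $H_N$ and our basis convention, in which $\uN = \sqrt{1-\gamma^2}\,\mathbf e_1(\sigma) + \gamma\, \mathbf e_N(\sigma)$, the integrand depends on $\sigma$ only through $\gamma$, so $\int_{S_{N-1}} \dd\sigma$ reduces to $|S_{N-2}|\int_{-1}^{1}(1-\gamma^2)^{(N-3)/2}\dd\gamma$ with $|S_{N-2}|=2\pi^{(N-1)/2}/\Gamma(\tfrac{N-1}{2})$.

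The main work is to compute the joint Gaussian law of $(H_N(\sigma), \dr H_N(\sigma), \nabla_\sph H_N(\sigma), \nabla^2 H_N(\sigma)|_\sph)$ by differentiating $N\xi(\sigma\cdot\sigma')$ in our basis (in which $\sigma_i=\delta_{iN}$). One finds that $\nabla_\sph H_N(\sigma) \sim \mathcal N(0, N\xipone \Id_{N-1})$, $\dr H_N(\sigma)\sim \mathcal N(0, N(\xipone+\xippone))$, $H_N(\sigma)\sim \mathcal N(0, N\xi(1))$, and the restriction $\nabla^2 H_N(\sigma)|_\sph$ is distributed as $\GOE_{N-1}(2N\xippone)$, which in law equals $N\sqrt{2\xippone}\cdot \GOE_{N-1}(N^{-1})$. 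Moreover, the third- and fourth-order mixed covariances all reduce (via $\sigma_i=\delta_{iN}$) to $0$ on the tangent space, so the restricted Hessian is independent of $(H_N,\dr H_N,\nabla_\sph H_N)$, and $\nabla_\sph H_N$ is independent of $(H_N,\dr H_N)$. Adding the external field produces deterministic shifts: $\nabla_\sph H_N^h(\sigma) = \nabla_\sph H_N(\sigma) + Nh\sqrt{1-\gamma^2}\,\mathbf e_1$, $\dr H_N^h(\sigma) = \dr H_N(\sigma)+Nh\gamma$, $H_N^h(\sigma) = H_N(\sigma)+Nh\gamma$, and by \eqref{eq:covHess}, $\nabla_\sph^2 H_N^h(\sigma) = \nabla^2 H_N(\sigma)|_\sph - (\dr H_N(\sigma)+Nh\gamma)\Id_{N-1}$. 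In particular $f_{\nabla_\sph H_N^h(\sigma)}(0) = (2\pi N\xipone)^{-(N-1)/2}\exp(-Nh^2(1-\gamma^2)/(2\xipone))$.

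Exploiting this independence structure, conditioning on $\nabla_\sph H_N^h(\sigma)=0$ leaves the joint law of $(H_N,\dr H_N,\nabla^2 H_N|_\sph)$ unchanged, and the conditional expectation of $|\det \nabla_\sph^2 H_N^h(\sigma)|\mathbf 1_E$ factorizes as $\int_R \E[|\det(N\sqrt{2\xippone}M-N(x+h\gamma)\Id_{N-1})|]\,p_{x,\gamma}(E)\, f_{N^{-1}\dr H_N(\sigma)}(x)\dd x$ with $M\sim \GOE_{N-1}(N^{-1})$, where $f_{N^{-1}\dr H_N(\sigma)}(x) = \sqrt{N/(2\pi(\xipone+\xippone))}\,\exp(-Nx^2/(2(\xipone+\xippone)))$. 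Pulling out $(N\sqrt{2\xippone})^{N-1}$ and applying Lemma~\ref{lem:detGOE} at argument $-(x+h\gamma)/\sqrt{2\xippone}$ — using that $\rho_N$ is even — yields the factor $\rho_N((x+h\gamma)/\sqrt{2\xippone})$ together with the exponential $\exp(N(x+h\gamma)^2/(4\xippone))$.

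The final step is bookkeeping: splitting $(1-\gamma^2)^{(N-3)/2}=(1-\gamma^2)^{N/2}(1-\gamma^2)^{-3/2}$, the exponential contributions $\tfrac12\ln(1-\gamma^2)$, $h^2\gamma^2/(2\xipone)-h^2/(2\xipone)$, $-x^2/(2(\xipone+\xippone))$ and $(x+h\gamma)^2/(4\xippone)$ combine to exactly $NG(x,\gamma)-Nh^2/(2\xipone)$, while the polynomial prefactors ($|S_{N-2}|$, $\sqrt{2}N^{-(N-2)/2}\Gamma(N/2)$ from Lemma~\ref{lem:detGOE}, the rescaling $(N\sqrt{2\xippone})^{N-1}$, and the two Gaussian normalizations) telescope to $\frac{2N}{\sqrt{\pi(\xipone+\xippone)}}\frac{\Gamma(N/2)}{\Gamma((N-1)/2)}(\xippone/\xipone)^{(N-1)/2}$. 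The essential conceptual input is the independence of the restricted Hessian from the other Gaussian derivatives at $\sigma$, which permits a direct use of Lemma~\ref{lem:detGOE}; the main obstacle is only the careful accounting of the many constants.
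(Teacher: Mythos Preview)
Your proposal is correct and follows essentially the same approach as the paper: apply the Kac--Rice formula, use the covariance computation to obtain the independence of $\nabla^2 H_N(\sigma)|_\sph$ from $(H_N,\dr H_N,\nabla_\sph H_N)$ and of $\nabla_\sph H_N$ from $(H_N,\dr H_N)$, disintegrate over $x=N^{-1}\dr H_N(\sigma)$ to extract $p_{x,\gamma}(E)$, apply Lemma~\ref{lem:detGOE} to the shifted GOE determinant, and reduce the sphere integral to an integral over $\gamma$. The paper organizes the same calculation into three intermediate lemmas (the covariance structure, the density $f_{\nabla_\sph H_N^h(\sigma)}(0)$, and the determinant expectation), but the substance and the key inputs are identical.
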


\begin{proof}
  By the Kac-Rice formula (see, e.g.~\cite[(12.1.4)]{adler2009random})
  \begin{equation}
    \begin{split}
      \label{eq:KacRiceNN}
      \E\big[\mathcal{N}_N(\Gamma,R,E)\big]
      = \int_{S_{N-1}}
      \E\big[|
        \det \nabla^2_\sph H^h_N(\sigma)| \1_{\kE_{E,R}}
        \, \big| \, \nabla_\sph
        H^h_N(\sigma)=0\big]
      \1_\Gamma ( \gamma )
      \,f_{\nabla_\sph H^h_N(\sigma)}(0)\,
      \dd \sigma,
    \end{split}
  \end{equation}
  where we set
  \begin{equation}
    \label{eq:gamma}
    \gamma  = \gamma (\sigma ) = \sigma  \cdot \mathbf u_N,
  \end{equation}
  and
  \begin{equation}
    \kE_{E,R}=\{N^{-1} H_N^h(\sigma) \in E,\,N^{-1}\dN
      H_N(\sigma) \in R\}.
  \end{equation}
  Using \eqref{eq:Hh}, formulas \eqref{eq:covgrad},
  \eqref{eq:covHess} and
  the notation introduced above them, it follows that
  \begin{align}
    \label{nabla-determinant}
    \nabla_\sph H_N^{h}(\sigma)
    &=\big(\partial_i H_N(\sigma)+hN\mathbf{u}_{N}\cdot\mathbf{e}_i(\sigma)\big)_{i=1}^{N-1},
    \\
    \label{nabla2-determinant}
    \nabla^2_\sph H_N^{h}(\sigma)
    &= \nabla^2 H_N^{h}(\sigma)\big|_\sph-\dN H_N^{h}(\sigma)\, \Id_{N-1}
    =
    \nabla^2  H_N(\sigma)\big|_\sph
    -\big(\dN H_N(\sigma)+Nh \gamma  \big)  \Id_{N-1}.
  \end{align}

  The vector that lists all entries of $H_N(\sigma ), \nabla H_N(\sigma )$
  and $\nabla^2 H_N(\sigma )$ is a centred multivariate Gaussian vector.
  Its covariance can be computed from \eqref{eq:covariance_H_N}. This
  computation is standard in the context of the critical point complexity
  for spherical Hamiltonians (see \cite[Lemma~1]{AB13} and
    \cite[Appendix~A]{arous2020geometry}); we recall these results in
  Lemma~\ref{lem:covariances_of_HN} in Appendix B. Here we only need
  the following claims that are a direct consequence of this lemma.

  \begin{lemma}
    \label{lem:law_of_grads}
    For every $\sigma \in S_{N-1}$
    \begin{enumerate}[(a)]
      \item $\nabla_\sph H(\sigma)$ is independent of
      $\big(H_N(\sigma), \dN H_N(\sigma), \nabla^2 H_N(\sigma)|_\sph\big)$,
      and $\nabla^2 H_N(\sigma)|_\sph$ is independent of
      $\big(H_N(\sigma ),\dN H_N(\sigma)\big)$.

      \item  $\partial_i H_N(\sigma),i=1,\ldots,N-1$, are i.i.d.~centred normal
      random variables with  variance $\xipone N$.

      \item  $\nabla^2 H_N(\sigma)|_\sph$ has the law of
      $\GOE_{N-1}(2 \xippone N)\overset{d}{=}N\sqrt{2 \xippone} \, \GOE_{N-1}(N^{-1})$.

      \item $\dN H_N(\sigma)$ is a centred normal random variable with
      variance $(\xipone+\xippone) N$.
    \end{enumerate}
  \end{lemma}

  We now evaluate the terms appearing on the right-hand side of the
  Kac-Rice formula \eqref{eq:KacRiceNN}.

  \begin{lemma}
    \label{lem:density}
    For every $\sigma \in S_{N-1}$, using the notation \eqref{eq:gamma},
    \begin{equation*}
      f_{\nabla_\sph H^{h}_N(\sigma)}(0)= C_1(N)
      \exp{\left(\frac{Nh^2\gamma ^2}{2\xipone} \right)},
    \end{equation*}
    where
    \begin{equation*}
       C_1(N) =
        (2\pi\,\xipone\,N)^{-\frac{N
            -1}{2}}\,\exp{\left(-\frac{Nh^2}{2\xipone}
            \right)}.
      \end{equation*}
  \end{lemma}

  \begin{proof}
    By \eqref{nabla-determinant} and Lemma~\ref{lem:law_of_grads}(b),
    $\nabla_\sph H^{{h}}_N(\sigma )$ is a
    Gaussian vector whose components are independent and whose $i$-th
    component has mean $hN  \uN\cdot \mathbf{e}_i(\sigma)$
    and variance $\xipone N$.  Therefore,
    \begin{equation*}
      f_{\nabla_\sph
        H^h_N(\sigma)}(0)
      =(2\pi\,\xipone\,N)^{-\frac{N- 1}{2}}
      \,\exp{\left(-\frac{Nh^2}{2\xipone}\sum_{i=1}^{N-1}
          (\uN\cdot \mathbf{e}_i(\sigma))^2\right)}.
    \end{equation*}
    Using that
    $1=|\uN|^2=\sum_{i=1}^{N}(\uN\cdot \mathbf{e}_i(\sigma))^2$
    and recalling that $\mathbf{e}_N(\sigma ) = \sigma $ completes the proof.
  \end{proof}

  \begin{lemma}
    \label{lem:determinant}
    For every $\sigma \in S_{N-1}$, every $\Gamma$, $R$, $E$ as in
    Proposition \ref{pro:exactN}, and $h\geq 0$,
    \begin{equation*}
      \label{eq:determinant}
      \begin{split}
        &\E\big[ \lvert \det \nabla^2_\sph H^h_N(\sigma)\rvert \1_{\kE_{E,R}}
          \,\big|\,
          \nabla_\sph H^h_N(\sigma)=0\big]
        \\& = C_2 (N) \int_{R}
        \exp\left(N\left(-\frac{x^{2}}{2(\xipone+\xippone)}
              +\frac{(x+h \gamma )^2}{4\xippone}    \right) \right)
        \rho_N\left(\frac{x+h \gamma }{\sqrt{2\xippone}}\right)
        p_{x,\gamma}(E)
        \dd x,
      \end{split}
    \end{equation*}
    where
    \begin{equation*}
      C_2(N) = \frac{(2\xippone)^{\frac{N-1}{2}}\,N^{\frac{N+1}{2}}\,\Gamma(N/2)}
      {\sqrt{\pi(\xipone+\xippone)}}.
    \end{equation*}
  \end{lemma}

  \begin{proof}
    Using \eqref{nabla-determinant} and \eqref{nabla2-determinant}
    together with Lemma~\ref{lem:law_of_grads}(a), we see that
    $\nabla^2_\sph H_N^h(\sigma )$ and $\mathcal E_{E,R}$ are independent
    of $\nabla_\sph H_N^h(\sigma )$, and therefore we can remove the
    conditioning on $\nabla_\sph H_N^h(\sigma ) = 0$. By
    \eqref{nabla2-determinant} and Lemma~\ref{lem:law_of_grads}(a,c), we
    then obtain
    \begin{equation*}
      \begin{split}
        \E\big[&\big\lvert
          \det \nabla^2_\sph H^h_N(\sigma)\big\rvert \1_{\kE_{E,R}} \big] =
        \E\left[\big\lvert\det \left(  \nabla^2
            H_N(\sigma)|_\sph -\dN
            H_N^{h}(\sigma)\Id_{N-1}\right)\big\rvert \1_{\kE_{E,R}}\right]                 \\
        & =\E\left[\Big\lvert
          \det \left(N \sqrt{2\xippone}\,
            \GOE_{N-1}(N^{-1})-\dN H^h_N(\sigma)\Id_{N-1}
        \right)\Big\rvert \1_{\kE_{E,R}}\right]      \\
        & = (2\xippone N^2)^{\frac{N-1}{2}}\,\E\bigg[\bigg\lvert\det
          \Big(\GOE_{N-1}(N^{-1})
            -\Big(\frac{\dN H_N(\sigma)+Nh\gamma }{N\sqrt{2\xippone}}\Big)\Id_{N-1}
             \bigg)\bigg\rvert \1_{\kE_{E,R}}\bigg]
        \label{XN_appears},
      \end{split}
    \end{equation*}
    where the matrix ${\rm GOE}_{N-1}(N^{-1})$ is independent of
    $\dN H_N(\sigma )$ and $H_N(\sigma )$. Recalling the distribution of
    $\dN H_N(\sigma )$ from Lemma~\ref{lem:law_of_grads}(d), using
    the notation from \eqref{eq:pxgamma}
    to write the expectation as an integral over the value $x$ of
    $N^{-1}\dN H_N(\sigma )$, this becomes
    \begin{equation*}
      \begin{split}
        (2\xippone N^2)^{\frac{N-1}{2}}&\,\sqrt{\frac{N}{2\pi(\xipone+\xippone)}}
        \int_{R}
        \exp{\left(-\frac{Nx^{2}}{2(\xipone+\xippone)}\right)}
        \\&\times
        \E\bigg[\bigg\lvert\det \bigg(\GOE_{N-1}(N^{-1})-
            \frac{x+h \gamma }{\sqrt{2\xippone}} \Id_{N-1}
        \bigg)\bigg\rvert \bigg]
        p_{x,\gamma}(E) \dd x.
      \end{split}
    \end{equation*}
    Lemma~\ref{lem:detGOE} and $\rho_N(x)=\rho_N(-x)$ then yield the claim.
  \end{proof}

  Going back to \eqref{eq:KacRiceNN}, using Lemmas~\ref{lem:density} and
  \ref{lem:determinant},  we obtain
  \begin{equation}
    \label{MAtCH}
    \begin{split}
      & \E\big[\mathcal{N}_N(\Gamma,R,E)\big]
      = C_1(N) C_2(N)
         \int_{  S_{N-1}}\int_{R}  \1_\Gamma(\gamma )
         \exp\bigg(\frac{Nh^2 \gamma^2}{2\xipone}\bigg)
         \\ &\qquad \qquad \times
         \exp{\bigg(N\bigg(-\frac{x^{2}}{2(\xipone+\xippone)}+\frac{(x+h
                \gamma)^2}{4\xippone}\bigg) \bigg)}
      \rho_N\bigg(\frac{x+h \gamma
      }{\sqrt{2\xippone}}\bigg)p_{x,\gamma }(E)
      \dd x \dd \sigma.
    \end{split}
  \end{equation}
  We proceed with the evaluation of the double integral on the right-hand
  side of \eqref{MAtCH} which we denote by $I_N(\Gamma , R, E)$.
  Observing that the integrand depends on $\sigma $ only through $\gamma$,
  we obtain
  \begin{equation}
    \begin{split}
      I_N(\Gamma , R, E) & = \int_{\Gamma}\int_R
      \frac{\Vol(\sqrt{1-\gamma^2}
          S_{N-2})}{\sqrt{1-\gamma^2}} \exp{\bigg(N\bigg(
            \frac{h^2\gamma^2}{2\xipone}-\frac{x^{2}}{2(\xipone+\xippone)}
            +\frac{(x+h \gamma)^2 }{4\xippone}\bigg)\bigg)}        \\
      & \qquad\qquad\times \rho_N\bigg(\frac{x+h
          \gamma}{\sqrt{2\xippone}}\bigg) p_{x,\gamma}(E) \dd x \dd \gamma .
    \end{split}
  \end{equation}
  Using that $ \Vol(r S_{N-2})=2r^{N-2}
    {\pi^{(N-1)/{2}}}/{\Gamma(\frac{N-1}{2})}$ and
  recalling the notation from \eqref{eq:Gdef}, we get
  \begin{equation*}
    \begin{split}
      I_N(\Gamma ,R, E) &=
      \frac{2 \pi^{\frac{N-1}{2}}}{\Gamma(\frac{N-1}{2})}
      \int_{\Gamma}\int_R
      (1-\gamma^2)^{\frac{N-3}{2}}
      \exp{\bigg(N\bigg( \frac{h^2\gamma^2}{2\xipone}-\frac{x^{2}}{2(\xipone+\xippone)}
            +\frac{(x+h\gamma )^2}{4\xippone}\bigg)\bigg)}\\
      &\qquad\qquad\times\rho_N\bigg(\frac{x+h
          \gamma}{\sqrt{2\xippone}}\bigg) p_{x,\gamma}(E)
      \dd x \dd \gamma \\
      &=  \frac{2  \pi^{\frac{N-1}{2}}}{\Gamma(\frac{N-1}{2})}
      \int_\Gamma \int_R
      (1-\gamma^2)^{-\frac{3}{2}}
      e^{NG(x,\gamma)}\rho_N\bigg(\frac{x+h
          \gamma}{\sqrt{2\xippone}} \bigg) p_{x,\gamma}(E) \dd x\dd \gamma .
    \end{split}
  \end{equation*}
  Inserting this into \eqref{MAtCH} and simplifying the prefactors yields
  the claim of Proposition \ref{pro:exactN}.
\end{proof}

\section{Optimising the integrand}\label{sec:opti}

The asymptotic behaviour of $\mathbb E[\mathcal N_N(\Gamma ,R, E)]$ will
be determined using the Laplace method. To this end, we need to control the
exponential growth rate of the integrand in Proposition~\ref{pro:exactN}.
We will discuss the rate of $p_{x,\gamma }(E)$ in Section
\ref{sec:maxchar} and set for this section $E=\R$, so that
$p_{x,\gamma }(E)=1$.

Let
\begin{equation}
  \label{eq:defF}
  \begin{split}
      F(x,\gamma) & =  G(x,\gamma)+\Phi\bigg(\frac{x+\gamma
          h}{\sqrt{2\xippone}}\bigg)                            \\
      & =
      \frac{1}{2}\ln{(1-\gamma^{2})}+\frac{h^{2}\gamma^{2}}{2\xipone}
      -\frac{ x^{2}}{2(\xipone+\xippone)}
      +\frac{(x+h\gamma)^{2}}{4\xippone}
      +\Phi\bigg(\frac{x+h \gamma }{\sqrt{2\xippone}}\bigg),
    \end{split}
\end{equation}
with $\Phi$ as in \eqref{eq:phi_def}. The next lemma gives an estimate of
the integrand in Proposition~\ref{pro:exactN} in terms of $F$. Before
stating and proving it, note that \eqref{eq:PhiBound} implies the
following uniform bound for $F(x,\gamma)$:
\begin{equation}\label{ineq: tailest}
  F(x,\gamma)
  \le \frac{1}{2}\ln( 1- \gamma^2) + c_1 - c_2 x^2
  \text{ for some } c_1,c_2 \text{ (depending on $h$, $\xi$) and all }x,\gamma.
\end{equation}

\begin{lemma}
  \label{lem:expass}
  It holds that
  \begin{equation}
    \label{eq:GandF}
    (1-\gamma^2)^{-\frac{3}{2}}  e^{NG(x,\gamma)}
    \rho_N\bigg(
      \frac{x + h \gamma}{\sqrt{2 \xippone}} \bigg)
    = e^{N F(x,\gamma) (1+o(1))+ o(N)},
  \end{equation}
  where the error terms may depend on $h,\xi$, but are uniform in
  $(x,\gamma)\in\R\times(-1,1)$.
\end{lemma}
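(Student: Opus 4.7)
The plan is to take logarithms of both sides, use the uniform control of $\ln\rho_N$ provided by Lemma~\ref{lem:rhoapprox}(ii), and reduce the claim to the uniform domination of $|\Phi|$ and $|\ln(1-\gamma^2)|$ by $|F|+1$, which follows from the tail bound \eqref{ineq: tailest}.

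Write $z = (x+h\gamma)/\sqrt{2\xippone}$ so that $F(x,\gamma) = G(x,\gamma)+\Phi(z)$ by \eqref{eq:defF}. Taking $\log$ of the left-hand side gives
\[L_N(x,\gamma) := -\tfrac{3}{2}\ln(1-\gamma^2) + NG(x,\gamma) + \ln\rho_N(z).\]
For each $\varepsilon'>0$, Lemma~\ref{lem:rhoapprox}(ii) yields $N_0(\varepsilon')$ such that $|\ln\rho_N(z) - N\Phi(z)| \le N\varepsilon'(|\Phi(z)|+1)$ for all $z\in\R$ and all $N \ge N_0$ (note $\Phi\le 0$, so both endpoints of the interval given by \eqref{Upper_bound_of_rho_N} are within distance $N\varepsilon'(|\Phi(z)|+1)$ of $N\Phi(z)$). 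Substituting this into $L_N$, we obtain $L_N = NF + \theta_N$ with error satisfying
\[|\theta_N(x,\gamma)| \le N\varepsilon'(|\Phi(z)|+1) + \tfrac{3}{2}|\ln(1-\gamma^2)|.\]
The target is precisely $\theta_N = NF\cdot o(1) + o(N)$ uniformly, which follows once one shows that, for some constant $C$ depending only on $h,\xi$,
\[|\Phi(z)| + |\ln(1-\gamma^2)| \le C\bigl(|F(x,\gamma)|+1\bigr) \qquad \text{for all } (x,\gamma)\in\R\times(-1,1).\]

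The displayed bound is the main step. For $|\Phi(z)|$: from \eqref{eq:PhiBound}, $|\Phi(z)|\le z^2/2 \le C'(x^2+1)$ after expanding $z$ and using $\gamma^2\le 1$. For $x^2$ and $|\ln(1-\gamma^2)|$ themselves: \eqref{ineq: tailest} reads $F(x,\gamma)\le \tfrac{1}{2}\ln(1-\gamma^2) + c_1 - c_2 x^2$. Since $\ln(1-\gamma^2)\le 0$, when $F\le 0$ this rearranges to $x^2 \le c_2^{-1}(|F|+c_1)$ and, dropping the $c_2 x^2$ term, to $|\ln(1-\gamma^2)| \le 2(|F|+c_1)$. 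When instead $F>0$, the same inequality forces $\tfrac{1}{2}\ln(1-\gamma^2) + c_1 - c_2 x^2 > 0$, which constrains both $x^2$ and $|\ln(1-\gamma^2)|$ to bounded sets depending only on $c_1,c_2$. Combining yields the displayed bound.

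With this in hand, $|\theta_N(x,\gamma)| \le \bigl(N\varepsilon'(C+1) + \tfrac{3C}{2}\bigr)(|F(x,\gamma)|+1)$. For any prescribed $\varepsilon>0$, first choose $\varepsilon' = \varepsilon/(2(C+1))$ and then take $N \ge \max\{N_0(\varepsilon'),\, 3C/\varepsilon\}$ to obtain $|\theta_N|\le \varepsilon N(|F|+1) = \varepsilon N|F|+\varepsilon N$ uniformly in $(x,\gamma)$, which is the desired $NF\cdot o(1)+o(N)$. The only genuine worry is uniformity in the two directions of degeneracy $|x|\to\infty$ and $\gamma\to\pm 1$, but both are absorbed cleanly because \eqref{ineq: tailest} shows that $|F|$ itself dominates the quadratic $z^2$-growth of $|\Phi|$ and the logarithmic blow-up of $|\ln(1-\gamma^2)|$.
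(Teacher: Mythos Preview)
Your proof is correct and follows essentially the same strategy as the paper: apply Lemma~\ref{lem:rhoapprox}(ii) to $\ln\rho_N$, and then use \eqref{eq:PhiBound} and \eqref{ineq: tailest} to show that the error terms $|\Phi(z)|$ and $|\ln(1-\gamma^2)|$ are uniformly dominated by $|F|+1$. Your treatment is in fact more explicit than the paper's, which only sketches the compact versus large-$|x|$ dichotomy and leaves the $\gamma\to\pm 1$ direction implicit; your direct proof of the bound $|\Phi(z)|+|\ln(1-\gamma^2)|\le C(|F|+1)$ handles both degenerate directions at once.
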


\begin{proof}
  Using Lemma~\ref{lem:rhoapprox}(ii) one obtains that the logarithm of the
  left-hand side equals
  \begin{equation*}
    N F(x,\gamma) + o\left(N\left(|\ln(1-\gamma^2)|
        + \Big|\Phi\Big(\tfrac{x+h\gamma}{\sqrt{2\xippone}}\Big)\Big| + 1\right)\right).
  \end{equation*}
  On any compact subset of $\R\times(-1,1)$ the error term can be bounded
  by $o(N)$. On the other hand for large enough $x$ the inequalities
  \eqref{eq:PhiBound} and \eqref{ineq: tailest} imply that the error term
  can be bounded by $o(N|F(x,\gamma)|)$.
\end{proof}


In order to determine the maximum of $F$, it is convenient to make
a number of changes of variables. We eliminate $x$ by setting
\begin{equation}
  \label{eq:defeta}
  \eta=\frac{x+h\gamma}{\sqrt{2\xippone}},
\end{equation}
and introduce
\begin{equation}
  \label{eq:defhtildea}
  \widetilde{h}=\frac{h}{\sqrt{2\xippone}}\in [0,\infty)
  \qquad\text{and}\qquad
  a=\frac{\xipone}{\xippone}\in(0,\infty).
\end{equation}
We then set
\begin{equation}
  \label{eq:tildeF}
  \widetilde{F}(\eta,\gamma)=F\big(\eta\sqrt{2\xippone}
    -h\gamma,\gamma\big) =
    \frac{1}{2}\ln\big(1-\gamma^2\big)+\frac{\widetilde{h}^2\gamma^2}{a}-
    \frac{(\eta-\widetilde{h}\gamma)^{2}}{1+a}+\frac{\eta^2}{2}+
    \Phi (\eta).
\end{equation}

Note that the maximum values of $F$ and $\widetilde{F}$ over
$\mathbb{R}\times(-1,1)$ coincide and $(x,\gamma)$ is a maximizer of $F$
if and only if $(\eta,\gamma)$ (with $\eta$ as in \eqref{eq:defeta})
is a maximizer of $\widetilde{F}$.

\begin{proposition}
  \begin{enumerate}[(i)]
    \item If $h^2  >  \xippone-\xipone$ (i.e.~if $h>h_c$ or
      $a>1$), then the unique maximizers of $F$ and $\widetilde F$ are
    $\pm (x_*, \gamma_*)$ and $\pm (\eta_*, \gamma_*)$ respectively, where
    \begin{equation}
      \label{eq:etastar}
      \gamma_*=\frac{h}{\sqrt{\xipone+h^2}},
      \qquad \eta_*=\frac{\xipone+\xippone+h^2}
      {\sqrt{2\xippone(\xipone +h^2)}},
      \qquad x_*=\frac{\xipone+\xippone}{\sqrt{\xipone +h^2}}.
    \end{equation}
    The common value of their maxima is then
    \begin{equation}
      \label{eq:Fetastar}
      F(x_*,\gamma_*)=\widetilde{F}(\eta_*,\gamma_*)
      =\frac{h^2}{2\xipone}-\frac{1}{2}\ln{
        \frac{\xippone}{\xipone}}.
    \end{equation}

    \item If $a<1$ and $h\in (h_c\sqrt a, h_c]$,
    then the unique maximizers of $F$ and $\widetilde F$ are
    $\pm (x_0, \gamma_0)$ and $\pm (\eta_0, \gamma_0)$ respectively, where
    \begin{equation}
      \label{eq:eta0}
      \gamma_0=\frac{1}{h}
      \sqrt{\frac{\xippone h^2-\xipone(\xippone-\xipone)}{\xippone}},
      \qquad
      \eta_0=\frac {h \gamma_0\sqrt{2 \xippone}}{\xippone-\xipone},
      \qquad
      x_0 =\frac {h\gamma_0 (\xipone+\xippone)}{\xippone-\xipone}.
    \end{equation}
    The common value of their maxima is then
    \begin{equation}
      \label{eq:Feta0}
      F(x_0,\gamma_0)=\widetilde{F}(\eta_0,\gamma_0)=\frac{1}{2}(H-1-\ln{H}),
    \end{equation}
    where
    \begin{equation}
      H=\frac{\xippone
        h^2}{\xipone(\xippone-\xipone)}= \frac{\xippone}{\xipone} \frac{h^2}{h_c^2}.
    \end{equation}

    \item If $a<1$ and $h \le h_c\sqrt a$, then $(0,0)$ is the unique maximizer of $F$ and
    $\widetilde F$
    and $F(0,0) = \widetilde F(0,0) = 0$.
    \item
      If $a=1$ and $h=0$, then
      $F(x,\gamma)=\tilde{F}(\eta,\gamma)=\frac{1}{2}\ln{(1-\gamma^2)}+\Phi(\eta)$
      with $\eta=\frac{x}{\sqrt{2\xippone}}$ and the maximum of $F$ over
      $\R \times (-1,1)$ is $0$.
  \end{enumerate}
  \label{pro:maxF}
\end{proposition}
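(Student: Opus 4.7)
Since the map $(x,\gamma)\leftrightarrow(\eta,\gamma)$ via \eqref{eq:defeta} is a smooth bijection, I will maximize $\widetilde F$ on $\R\times(-1,1)$. The symmetry $\widetilde F(-\eta,-\gamma)=\widetilde F(\eta,\gamma)$ pairs extrema, and the uniform bound \eqref{eq:PhiBound} together with the factor $\frac12\ln(1-\gamma^2)$ forces $\widetilde F\to-\infty$ at the boundary of $\R\times(-1,1)$, so the supremum is attained at a critical point. Case (iv) ($a=1$, $h=0$) is immediate: $\widetilde F=\frac12\ln(1-\gamma^2)+\Phi(\eta)$ is a sum of two non-positive functions, both achieving their maximum value of zero.

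For the other cases I compute $\Phi'(\eta)=-\sgn(\eta)\sqrt{\eta^2-2}\,\1_{\{|\eta|\ge\sqrt 2\}}$, which is continuous on $\R$, and set $\nabla\widetilde F=0$ to obtain
\al{\frac{a-1}{1+a}\eta+\frac{2\tilde h\gamma}{1+a}+\Phi'(\eta)&=0,\\
-\frac{\gamma}{1-\gamma^2}+\frac{2\tilde h^2\gamma}{a}+\frac{2\tilde h(\eta-\tilde h\gamma)}{1+a}&=0.}
I then split into the \emph{inner regime} $|\eta|<\sqrt 2$ (where $\Phi'\equiv 0$) and the \emph{outer regime} $|\eta|\ge\sqrt 2$, using the $\pm$ symmetry to restrict to $\eta\ge 0$.

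In the inner regime the first equation is linear, forcing either $(\eta,\gamma)=(0,0)$ or, for $a\ne 1$, $\eta=2\tilde h\gamma/(1-a)$. Substituting the latter into the second equation yields $1-\gamma^2=a(1-a)/(2\tilde h^2)$, which admits $\gamma\in(0,1)$ iff $a<1$ and $h^2\ge ah_c^2$; the inner-regime constraint $|\eta|\le\sqrt 2$ reduces further to $h\le h_c$, and translating back produces the $(x_0,\gamma_0)$ of \eqref{eq:eta0}. In the outer regime I introduce $Q=\eta+\sqrt{\eta^2-2}$, a smooth strictly increasing bijection $[\sqrt 2,\infty)\to[\sqrt 2,\infty)$ with inverse $\eta=(Q^2+2)/(2Q)$ and $\sqrt{\eta^2-2}=(Q^2-2)/(2Q)$. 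The first equation then becomes $2\tilde h\gamma=(Q^2-2a)/Q$, expressing $\gamma$ rationally in $Q$; inserting this into the second equation eliminates the square root and, after elementary manipulation, collapses to the single constraint $Q^2=4\tilde h^2+2a$. This admits the unique solution $Q=\sqrt{4\tilde h^2+2a}$, which exceeds $\sqrt 2$ precisely when $h^2>h_c^2$ (automatic when $a>1$); back-substitution recovers $(\eta_*,\gamma_*,x_*)$ of \eqref{eq:etastar}.

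To identify the global maximizer I compare $\widetilde F$-values at the finitely many critical points. A direct Hessian computation at $(0,0)$ gives $\partial_\eta^2\widetilde F(0,0)=(a-1)/(1+a)$ and $\det\nabla^2\widetilde F(0,0)=[a(1-a)-2\tilde h^2]/[a(1+a)]$, so $(0,0)$ is a strict local, hence global, maximum exactly when $a<1$ and $h<h_c\sqrt a$, covering case (iii) (with the boundary case $h=h_c\sqrt a$ handled by continuity and the fact that there the inner branch coalesces into $(0,0)$). In case (ii), $(0,0)$ becomes a saddle and the global maxima are $\pm(\eta_0,\gamma_0)$; substitution into \eqref{eq:tildeF} with $\Phi=0$ and rewriting in terms of $H$ yields \eqref{eq:Feta0}. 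In case (i), the unique outer critical points $\pm(\eta_*,\gamma_*)$ dominate; substitution using the closed form $\Phi(\eta)=-\tfrac12\eta\sqrt{\eta^2-2}+\ln\frac{\eta+\sqrt{\eta^2-2}}{\sqrt 2}$ collapses to \eqref{eq:Fetastar}, and strict positivity of $\widetilde F$ at these points (hence domination over $(0,0)$) follows from the elementary bound $\ln x\le x-1$. The main technical obstacle is the outer-regime algebra: naive elimination of $\sqrt{\eta^2-2}$ would introduce spurious roots after squaring, and the $Q$-substitution is the key device that keeps the system polynomial and renders uniqueness transparent.
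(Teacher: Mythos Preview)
Your argument is correct and follows the paper's overall strategy (reduce to $\widetilde F$, exploit the $(\eta,\gamma)\mapsto(-\eta,-\gamma)$ symmetry, find all critical points in an inner and an outer regime, then compare values), but your handling of the outer regime $\eta\ge\sqrt 2$ is genuinely different and cleaner. The paper squares the first equation to obtain a quadratic in $\eta$, picks the positive root, substitutes into the second equation, simplifies via the identity $CR-B=1$, squares again to eliminate the surviving radical, observes a fortuitous cancellation of the cubic and quadratic terms in $\gamma^2$, and must then return to the original system to check that the solution was not spurious. Your substitution $Q=\eta+\sqrt{\eta^2-2}$ rationalizes the system from the outset: the first equation becomes $2\tilde h\gamma=(Q^2-2a)/Q$, and after substitution the second collapses directly to $Q^2=4\tilde h^2+2a$, with no squaring and hence no need to rule out extraneous roots. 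The paper does introduce an equivalent variable $z=\sqrt{\xippone/(h^2+\xipone)}$ (so $Q=\sqrt 2/z$), but only \emph{after} the critical point has been found, to evaluate $\widetilde F(\eta_*,\gamma_*)$; you use the same idea earlier and to better effect.

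Two small points of presentation. First, your phrase ``strict local, hence global'' in case (iii) is not a valid implication on its own; what makes the argument work is that you have already shown $(0,0)$ is the \emph{only} critical point in that regime, so it is the global maximum automatically. Second, when $h=0$ (which occurs in case~(i) for $a>1$) the expression $\gamma=(Q^2-2a)/(2\tilde h Q)$ is indeterminate; the conclusion $Q^2=2a$, $\gamma=0$ still holds but has to be read off the two equations separately rather than by substitution. Neither of these affects the validity of the proof.
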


\begin{proof}
Note that from \eqref{eq:phi_def}
  \begin{equation}\label{eq: phi deriv}
   \Phi'(\eta) = -\sqrt{\eta^{2}-2}\,\sgn(\eta)
      \,\1_{ |\eta|\ge\sqrt{2} },
  \end{equation}
so that $\Phi$ is a differentiable function (including at $\pm\sqrt{2}$). Thus
  $\widetilde{F}$ is differentiable as well.
  In addition, from \eqref{ineq: tailest} it follows that $\widetilde{F}(\eta,\gamma)$ tends to $-\infty$ as
  $|\gamma|\to 1$ or $|\eta|\to\infty$. Therefore, a maximizer of $\widetilde F$
  must exist and be a critical point. Moreover, for every  $\eta\ge 0$ and
  $\gamma \ge 0$,
  \begin{equation}
    \label{eq:Fsym}
    \widetilde{F}(\eta,\gamma)=\widetilde{F}(-\eta,-\gamma)
    \geq \widetilde{F}(-\eta,\gamma)=\widetilde{F}(\eta,-\gamma),
  \end{equation}
  where equality holds only when $\eta=\gamma=0$.  Hence, to look for a
  maximizer, we only need to consider the critical points in
  $[0,\infty)\times[0,1)$. Thus we assume $\eta,\gamma\ge0$
    in the remainder of the proof. Taking the derivatives in $\eta$ and $\gamma$, and using \eqref{eq: phi deriv}
  we obtain
  \begin{align*}
    \partial_{\eta}\widetilde{F}(\eta,\gamma)
    &=
    A\gamma-
    B\eta-\sqrt{\eta^{2}-2}\,
    \1_{ |\eta|\ge\sqrt{2}},\\
    \partial_{\gamma}\widetilde{F}(\eta,\gamma)
    &=C\gamma+A\eta-\frac{\gamma}{ 1-\gamma^{2}},
  \end{align*}
  with
  \begin{equation}
    A=\frac{2\widetilde{h}}{1+a}, \qquad B=\frac{1-a}{1+a}\qquad\text{and}\qquad
    C=\frac{2\widetilde{h}^{2}}{(1+a)a}.
    \label{eq:ABC}
  \end{equation}
  Hence, the critical points of $\widetilde F$ in $[0,\infty)\times(0,1)$ solve the system
  \begin{equation}
    \label{eq:system1}
    \begin{split}
      A\gamma-B\eta & =  \sqrt{\eta^{2}-2}
      \,\1_{ |\eta|\ge\sqrt{2} },          \\
      C\gamma+A\eta & =  \frac{\gamma}{1-\gamma^{2}}.
    \end{split}
  \end{equation}

  Solutions of this system are illustrated on
  Figure~\ref{fig:tildeFcrit}. The next two lemmas give  the
  solutions in various regimes. For the first one recall the definitions
  of $\gamma_*$, $\eta_*$ from \eqref{eq:etastar}.

  \begin{figure}[ht]
    \centering
    \includegraphics[height=5.7cm]{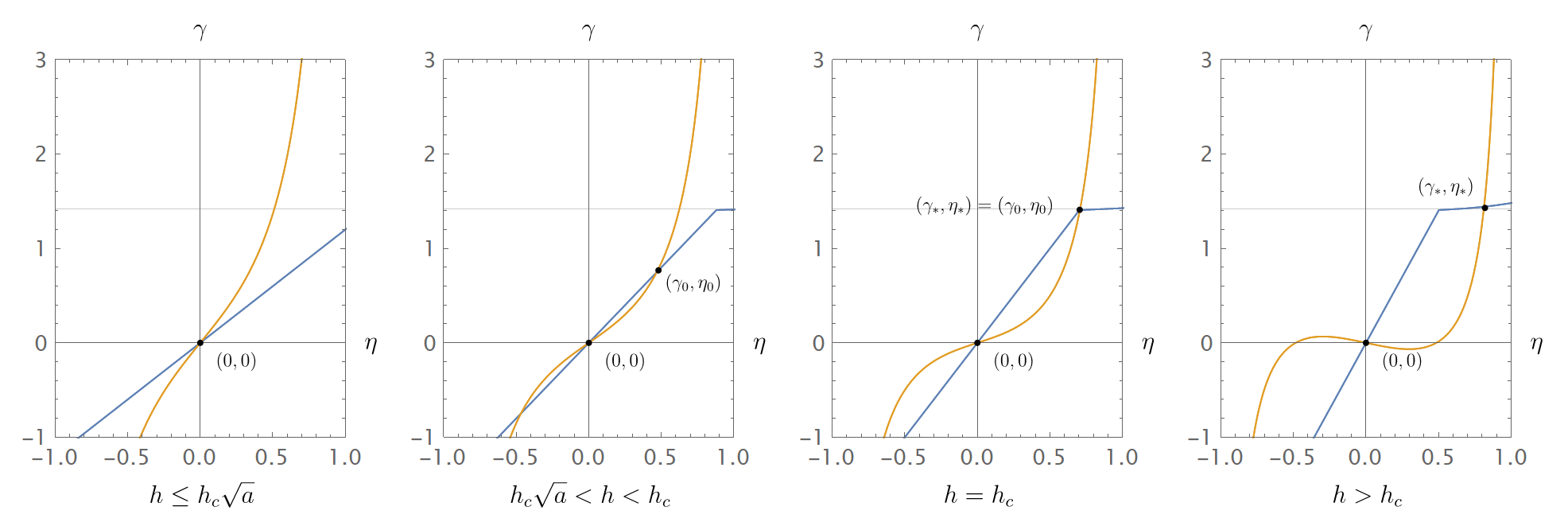}
    \caption{Points $(\gamma ,\eta )$ solving the first (blue line) and
      the second (yellow line) equation of the system \eqref{eq:system1}
      for pure $3$-spin Hamiltonian.}
    \label{fig:tildeFcrit}
  \end{figure}

  \begin{lemma}
    \label{lem:system_with_rhs}
    If $h^2\ge \xippone-\xipone$ (i.e. $a<1$ or $h \ge h_c$) then the point
    $(\eta_*, \gamma_*)$ is the only solution to the system
    \eqref{eq:system1} (and thus the only critical point of $\widetilde F$)
    in $[\sqrt{2},\infty)\times[0,1)$.

    If $h^2< \xippone-\xipone$ (i.e. $a>1$ and $h<h_c$) there is no solution to
    \eqref{eq:system1} in $[\sqrt{2},\infty)\times[0,1)$.
  \end{lemma}

  \begin{proof}
    We first consider the case $h=0$. Then the calculation reduces to the one for purely random external field carried out in \cite{fyodorov2013high}. Indeed $A=C=0$, and the second
    equation of \eqref{eq:system1} implies $\gamma=0=\gamma_*$.  By the
    first equation, $\eta=\sqrt{{2}/({1-B^2})}$ and since
    \begin{equation}\label{eq:Bsquared ident}
    1-B^2= \frac{4a}{(1+a)^2},
    \end{equation}we have $\sqrt{{2}/({1-B^2})} = \eta_*$. Therefore,
      $(\eta_*,\gamma_*)$ is the unique solution of \eqref{eq:system1},
    and the proof is completed in this case.

    From now on we assume $h>0$.   For $\eta\geq \sqrt{2}$, the first
    equation of \eqref{eq:system1} implies that
    $\eta^{2}-2=\left(B\eta-A\gamma\right)^{2}$, which yields the equation
    \begin{equation*}
      (1-B^{2})\eta^{2}+2AB\eta\gamma-(2+A^{2}\gamma^{2}) = 0.
    \end{equation*}
    As $B\in (-1,1)$ this is a quadratic equation for $\eta$ with two solutions, only one of which is non-negative since $2(1-B^{2}) + A^{2}\gamma^{2} > A^2 B^2 \gamma^2 $, namely
    \begin{equation*}
      \eta=\frac{-AB\gamma + \sqrt{2(1-B^{2}) + A^{2}\gamma^{2}}}{1-B^{2}}.
    \end{equation*}
    
    We can rewrite this as
    \begin{equation}
      \eta=\frac{1}{R}\bigg(-\frac{B}{A}\gamma +
        \frac{1}{A}\sqrt{2R+\gamma^{2}}\bigg),\label{eq:eta_from_gamma-1}
    \end{equation}
    with 
    \begin{equation}
      \label{eq:R}
      R=\frac{1-B^{2}}{A^{2}}\overset{\eqref{eq:Bsquared ident},\eqref{eq:ABC}}=
      \frac{a}{\widetilde{h}^{2}}=\frac{2\xipone}{h^{2
      }}.
    \end{equation}
    Plugging this into the second equation of \eqref{eq:system1}, we
    get
    \begin{equation*}
      C\gamma+\frac{A}{R}\left(-\frac{B}{A}\gamma +
        \frac{1}{A}\sqrt{2R+\gamma^{2}}\right)=\frac{\gamma}{1-\gamma^{2}}.
    \end{equation*}
    Observing that $\gamma=0$ is not a solution to this equation, after dividing through by
    $\gamma$ and some simplifications, we obtain
    \begin{equation}
      \label{eq:BLA-1}
      CR-B + \sqrt{\frac{2R}{\gamma^{2}}+1}=\frac{R}{1-\gamma^{2}}.
    \end{equation}
    By \eqref{eq:ABC}, $CR-B=1$. Hence, this is equivalent to
    \begin{equation}\label{eq: R equation}
      \sqrt{1+\frac{2R}{\gamma^2}}=\frac{R}{1-\gamma^2}-1.
    \end{equation}
    If one squares both sides to eliminate the square root and multiplies
    out by $\gamma^2(1-\gamma^2)^2$ to remove all fractions, then one
    gets what is a priori a third degree equation in $\gamma^2$. However
    the third and second degrees cancel yielding the equation $2R(1-2\gamma^2) = R^2 \gamma^2 - 2R\gamma^2$, which has a single non-negative  solution
    \begin{equation}
      \label{eq:gamma_sol_eta_large}
      \gamma=\sqrt{\frac{2}{R+2}}
      \overset{\eqref{eq:R}} {=}
      \frac{h}{\sqrt{\xipone+h^{2}}}\overset{\eqref{eq:etastar}}=
      \gamma_*.
    \end{equation}
    Plugging this into \eqref{eq:eta_from_gamma-1} and using that $\sqrt{2R + \gamma_*^2} = (h^2 + 2\xipone)/h$ we obtain that
    $\eta$ must satisfy
    \begin{equation}\label{eq:eta_star}
      \eta= \frac{1}{\sqrt{\xipone + h^2}}
      \frac{1}{A R h} \left( -B h^2 + h^2 + 2 \xipone\right)=
      \frac{h^{2}+\xipone+\xippone}{\sqrt{2\xippone(\xipone+h^{2})}}
      \overset{\eqref{eq:etastar}}= \eta_*.
    \end{equation}
    Hence the only possible solution of \eqref{eq:system1} in
    $[\sqrt{2},\infty)\times [0,1)$ is $(\eta_*,\gamma_*)$.

    To check that $(\eta_*,\gamma_*)$ \emph{is} indeed a solution (since we squared the equation \eqref{eq: R equation} we must verify this), we
    firstly note that by the definition of $\eta_*$
    \begin{equation}\label{etastar_at_least_root2}
        \eta_*^2 -2 = \frac{(\xipone+h^2-\xippone)^2}{2\xippone(\xipone+h^2)},
    \end{equation} so
    that obviously $\eta_* \ge 2$.
    Secondly, by plugging $\eta=\eta_*$,
      $\gamma=\gamma_*$ into the second equation
      of \eqref{eq:system1} we see that these always solve the equation, since
    \begin{equation}
        C\gamma_*+A\eta_* =
        \frac{2\tilde{h}}{(1+a)\xipone\sqrt{2\xippone(\xipone+h^2)}}
        (\xippone h^2+\xipone(\xippone+\xipone+h^2)),
    \end{equation}
    where the last parenthesis factors as $(\xipone + h^2)(\xipone + \xippone)=\xippone(\xipone + h^2)(1+a)$, giving
     \begin{equation}        C\gamma_*+A\eta_* =\frac{\tilde{h}\sqrt{2\xippone}\sqrt{\xipone+h^2}}{\xipone}=\frac{h\sqrt{\xipone+h^2}}{\xipone}=\frac{\gamma_*}{1-\gamma_*^2}.
     \end{equation}
      Lastly, the left-hand side of the first equation of
      \eqref{eq:system1} equals
      \begin{align*}
        A\gamma_*-B\eta_*
        &=\frac{2h^2\xippone-(\xippone-\xipone)(\xipone+\xippone+h^2)}{(\xipone+\xippone)\sqrt{2\xippone(\xipone+h^2)}}\\
        &= \frac{2\xippone h^2-h^2(\xippone-\xipone)-(\xippone-\xipone)(\xipone+\xippone)}{\sqrt{2\xippone(\xipone+h^2)}(\xipone+\xippone)}\\
        &=\frac{(h^2+\xipone-\xippone)(\xipone+\xippone)}
        {\sqrt{2\xippone(\xipone+h^2)}(\xipone+\xippone)}
        =\frac{h^2+\xipone-\xippone}{\sqrt{2\xippone(\xipone+h^2)}}.
      \end{align*}
      
      The right-hand side of the first equation of \eqref{eq:system1}
      is $\sqrt{\frac{(\xipone+h^2-\xippone)^2}{2\xippone(\xipone+h^2)}}$ by \eqref{etastar_at_least_root2}, which equals the last expression in the above display as long as $h^2 \ge \xippone - \xipone$.
      %
  \end{proof}

  We now inspect critical points with $\eta \in [0,\sqrt 2]$. Recall the
  definition of $\eta_0$, $\gamma_0$ from Proposition~\ref{pro:maxF}.

  \begin{lemma}
    \label{lem:system_0}
      $\widetilde F$ has at most two critical points in
      $[0,\sqrt 2]\times [0,1)$:

      If $a < 1$
      and $h \le \sqrt{a}h_c$ or if $h^2> \xippone-\xipone$ (i.e. $a>1$ or $h>h_c$) then  the point $(0,0)$ is the only  solution
      to \eqref{eq:system1} in $[0,\sqrt 2]\times [0,1)$.

      If $a<1$ and $h \in (h_c\sqrt a, h_c]$, then the
      points $(0,0)$ and $(\eta_0, \gamma_0)$ are the only solutions to
      \eqref{eq:system1} in $[0,\sqrt 2]\times [0,1)$.
  \end{lemma}

  \begin{proof}
  As claimed $\gamma=0,\eta=0$ is always a solution to \eqref{eq:system1}. In the remainder of the proof we thus seek to determine when there are other non-negative solutions.

  If $\gamma=0$, then $\eta=0$ is the only solution of \eqref{eq:system1}
    in $[0,\sqrt{2})$, and we can thus assume that $\gamma> 0$.

    We first consider the case $a=1$ (that is $\xippone - \xipone=0$) and $h>0$. Then $B=0$ and $A>0$, and
    the first equation of \eqref{eq:system1} leads to $\gamma=0$, showing that $(0,0)$ is the only solution and completing the proof in this special case.

    From now on we can assume $a\neq 1$, and thus $B\neq 0$.  Since we consider $\eta\in[0,\sqrt{2}]$
    only, the first equation in \eqref{eq:system1} is linear and implies
    $\eta=\frac{A}{B}\gamma$.  If $a>1$, then $A>0$ and $B<0$, and thus
    there is no solution to \eqref{eq:system1} with $\gamma,\eta>0$. This completes the proof in the case $a>1$. Hence, we assume $a<1$ for the rest of the proof.

    Plugging $\eta = \frac AB \gamma$ into the second equation, we obtain
    \begin{equation*}
      \Big(C+\frac{A^{2}}{B}\Big)\gamma=\frac{\gamma}{1-\gamma^{2}}.
    \end{equation*}
    Using the identity $C+\frac{A^2}B=\frac{h^2}{(\xippone-\xipone)a}$
    which follows from \eqref{eq:ABC}, it is easy to see that any non-zero solution satisfies
    \begin{equation}
      \label{eq:gamma_sol}
      \gamma^2 = 1-\frac{(\xippone-\xipone)a}{h^2}.
    \end{equation}
    If $h \le h_c \sqrt{a}$ then the right-hand side is non-positive, so that there are no non-zero solutions to \eqref{eq:gamma_sol} and thus no further solutions to \eqref{eq:system1}. This completes the proof of the case $a<1$ and $h \le h_c \sqrt{a}$.
    When $h > h_c \sqrt{a}$ then \eqref{eq:gamma_sol} has unique positive solution
    \begin{equation}\label{eq: gamma0}
        \sqrt{1-\frac{(\xippone-\xipone)a}{h^2}}\overset{\eqref{eq:defhtildea},\eqref{eq:eta0}}= \gamma_0.
    \end{equation}
    The matching
    $\eta$, computed from the first equation, is given by
    $\eta  = \frac AB \gamma_0 = \eta_0$ (see \eqref{eq:ABC}), for $\eta_0$ as
    claimed in \eqref{eq:eta0}.
    Thus $(\eta_0,\gamma_0)$ is the only possible solution to \eqref{eq:system1} in $[0,\sqrt{2}] \times [0,1)$ other than $(0,0)$, and is a solution if indeed $\eta_0 \le \sqrt{2}$.

    If $h\leq  h_{c}$, then $\eta_0 \le \sqrt 2$ holds true, because $\eta_0$ is an increasing functions of
    $h$ by \eqref{eq: gamma0} and \eqref{eq:eta0}, and $\eta_0 = \sqrt 2$ for $h = h_c$, by
    \eqref{eq:eta0}. This completes the proof of the case $a<1,h \in (\sqrt{a} h_c , h_c]$.

    Otherwise, if $h>h_c$, then
    $\eta_0>\sqrt{2}$, so $(\eta_0,\gamma_0)$ is not a solution to \eqref{eq:system1}. This completes the proof of the case $a<1,h>h_c$.
  \end{proof}

  We now have all ingredients to complete the proof of Proposition~\ref{pro:maxF}.

  \begin{proof}[Proof of (iii)]
    The claim (iii) follows directly from the
    previous two lemmas, as $(0,0)$ is the only critical point for
    $h\le h_c\sqrt {a}$, so it must be
    a maximum.
  \end{proof}

  For claims (i), (ii) we need to evaluate $\widetilde F$ at
  the remaining critical points and show that it is positive there.

  \begin{proof}[Proof of (i)]
  By
    Lemmas~\ref{lem:system_with_rhs},~\ref{lem:system_0}, and
    \eqref{eq:Fsym} the possible maximizers  of $\tilde F$ are $(0,0)$ and $\pm (\eta_*,\gamma_*)$.
    We need to compute
    $\widetilde F (\eta_*, \gamma_*)$ and show that it is positive. To this end we write
    \begin{equation}\label{eta representation}
      \eta_{*}=\frac{1}{\sqrt{2}}\Big(z+\frac{1}{z}\Big)\quad\text{for
      }z=\sqrt{\frac{\xippone}{h^{2}+\xipone}}.
    \end{equation}
    If $x=\frac{1}{\sqrt{2}}(z+\frac{1}{z})$
    for $0<z<1$, then the identity
    \begin{equation}
      \frac{x^{2}}{2}+\Phi(x)=\frac{1}{2}+\frac{z^2}{2}
      -\ln z
      \label{eq:Omega_identity}
    \end{equation}
    can be proved from the definition \eqref{eq:phi_def} of $\Phi$ by noting
    that $\sqrt{x^2-2}=\frac{1}{\sqrt{2}}(\frac{1}{z}-z)$. In
    addition,  by \eqref{eq:etastar},
    \begin{equation*}
      \frac{1}{2}\ln\big(1-\gamma_{*}^{2}\big)=
      \frac{1}{2}\ln\bigg(\frac{\xippone}{\xipone+h^{2}}\bigg).
    \end{equation*}
    Inserting the last two displays into~\eqref{eq:tildeF} and cancelling
    the terms containing the logarithm, we obtain
    \begin{equation}\label{eq:F_tilde}
      \widetilde{F}(\eta_{*},\gamma_{*})
      =\frac{\widetilde{h}^{2}\gamma _{*}^{2}}{a}
      -\frac{(\eta_{*}-\widetilde{h}\gamma_{*})^{2}}{1+a}+\frac{1}{2}+
      \frac{1}{2}\frac{\xippone}{h^{2}+\xipone} - \frac{1}{2}\ln
      \bigg(\frac{\xippone}{\xipone}\bigg).
    \end{equation}
    Plugging in all definitions (see  \eqref{eq:defhtildea} and
      \eqref{eq:etastar}) and using $\frac{\widetilde{h}^{2}\gamma _{*}^{2}}{a}=\frac{h^{4}}{2\xipone\left(\xipone+h^{2}\right)}$ and $\frac{(\eta_{*}-\widetilde{h}\gamma_{*})^{2}}{1+a}=\frac{1}{2}\frac{\xipone+\xippone}{\xipone+h^{2}}$ one
      verifies \eqref{eq:Fetastar}.

    Applying the elementary inequality
      \begin{equation}\label{eq: elemineq}
        \ln y \le y-1 \text{ for all }y>0\quad \text{ (with equality only if }y=1).
      \end{equation}
    with $y=\frac{\xippone}{\xipone}$ to
    \eqref{eq:Fetastar} we get for
    $h^2> \xippone-\xipone$ that
    \begin{equation}\label{eq: etagammaineq}
      \widetilde{F}(\eta_*,\gamma_*)
      >\frac{\xippone-\xipone}{2\xipone}
      -\frac{1}{2} \Big(\frac{\xippone}{\xipone}-1\Big)=0 =
      \widetilde{F}(0,0).
    \end{equation}
    Thus $\pm(\eta_*,\gamma_*)$ are the unique
    maximizers of $\tilde F$ if $h^2> \xippone-\xipone$.
    This proves claim (i) of the proposition.
  \end{proof}

  \begin{proof}[Proof of (ii)]
    By
    Lemmas~\ref{lem:system_with_rhs}, \ref{lem:system_0} and
    \eqref{eq:Fsym},  we have that $\pm(\eta_0,\gamma_0)$ are the only possible
    maximizers of $\tilde F$ for $h\in (h_c\sqrt a, h_c)$.
    When $a<1$ and $h=h_c$, then $\eta_*=\eta_0=\sqrt{2}$, $x_*=x_0$ and
    $\gamma_*=\gamma_0$, so this holds also for $h=h_c$.
    Thus to verify (ii) we must compute $F(x_0,\gamma_0)$. To this end we use
    that $\Phi(\eta_0)=0$ since $|\eta_0|\leq \sqrt{2}$. Plugging
    \eqref{eq:eta0} into the \eqref{eq:tildeF} and
    using that $ 1-\gamma_0^2 = \frac{1}{H}$ we obtain
    \begin{align*}
      &\frac{\widetilde{h}^{2}\gamma_0^2}{a}-
      \frac{(\eta_0-\widetilde{h}\gamma_0)^{2}}{1+a}+\frac{\eta_0^2}{2} \\
      &=\gamma_0^2\left(\frac{h^2}{2 \xippone a}-
        \frac1{1+a}
        {\bigg(\frac{h\sqrt{2\xippone}}{\xippone-\xipone}-\frac{h}{\sqrt{2\xippone}}\bigg)^{2}}
        +\frac{2 h^2  \xippone}{2 (\xippone-\xipone)^2 }\right)\\
      &=\left(1-\frac{1}{H}\right)\left(\frac{h^2}{2 \xipone}-
        \frac{h^2}{\xippone+\xipone}
        \bigg({\frac{2\xippone^2}{(\xippone-\xipone)^2}-2\frac{\xippone}{\xippone-\xipone}}+\frac{1}{2}\bigg)
        +\frac{2 h^2  \xippone}{2 (\xippone-\xipone)^2 }\right)\\
      & = \left(1-\frac{1}{H}\right)\left(\frac{h^2}{2 \xipone}+
          \frac{h^2}{2} \frac{\left(\xippone\right)^2
          -
          \left(\xipone\right)^2}{\left(\xippone - \xipone \right)^2\left(\xippone + \xipone \right)} \right)\\
      &=\left(1-\frac{1}{H}\right)\left(\frac{h^2}{2 \xipone}+
        \frac{ h^2}{2(\xippone-\xipone)}\right) = \left( 1-\frac{1}{H} \right) \frac{H}{2} =  \frac{H-1}{2}.
    \end{align*}
    This gives \eqref{eq:Feta0}.  As $H>1$ for $h>\sqrt{a}h_c$ and $a<1$, we see immediately using \eqref{eq: elemineq} that
    $\widetilde F(\eta_0, \gamma_0)>0  = \widetilde F(0,0)$. This
    proves the claim (ii).

  \end{proof}

  \begin{proof}[Proof of (iv)]
      Substituting $\xippone=\xipone$ and $h=0$ into $F$ and $\tilde{F}$, it is straightforward to check $F(x,\gamma)=\tilde{F}(\eta,\gamma)=\frac{1}{2}\ln{(1-\gamma^2)}+\Phi(\eta).$ By \eqref{eq: phi deriv}, $\Phi(\eta)\leq 0$ for any $\eta\in\R$. Hence, since $\ln{(1-\gamma^2)}\leq 0$, the maxima of $F$ and $\tilde{F}$ are $0$.
  \end{proof}

  This completes the proof of all parts of Proposition~\ref{pro:maxF}.
\end{proof}

Having control over the exponential term we are prepared to finish this
section with the proof of claim \eqref{eq:main_res_triv2} which gives the annealed
complexity in the nontrivial regime.

\begin{proof}[Proof of Theorem \ref{thm:sec_thm}(ii)]
  By Proposition \ref{pro:exactN} with $R=E=\mathbb{R}$ and
  $\Gamma = [-1,1]$,
  \begin{equation*}
    \mathbb{E}[\mathcal{N}_{N}]
    =  \exp\left( N \left[-\frac{h^2}{2\xipone}+\frac{1}{2}\ln\left(\frac{\xippone}{\xipone}\right)\right]+o(N)\right)
    \int_{-1}^{1} \int_{-\infty}^{\infty}
    \frac{e^{NG(x,\gamma)}}{(1-\gamma)^{3/2}}\,
    \rho_N\left(\frac{x+h
        \gamma}{\sqrt{2\xippone}} \right) \dd x
    \dd \gamma,
  \end{equation*}
  which by Lemma~\ref{lem:expass} equals
  \begin{equation*}
    \exp\left( N \left[-\frac{h^2}{2\xipone}+\frac{1}{2}\ln\left(\frac{\xippone}{\xipone}\right)\right]+o(N)\right)
    \int_{-1}^{1} \int_{-\infty}^{\infty}
    e^{N F(x,\gamma)(1+o(1))}\,
    \dd x \dd \gamma .
  \end{equation*}

  Bounding the integral over the complement of a sufficiently large box
  around the origin from above using \eqref{ineq: tailest} gives an upper
  bound of $e^{-L N}$ for any $L$. Hence restricting to a bounded region
  only causes vanishing multiplicative error and then the Laplace method yields
  \begin{equation}\label{eq:upperbound_exp_scale}
    \mathbb{E}\left[\mathcal{N}_{N}\right]
    = \exp\left(N\left[\frac{1}{2}\ln\left(\frac{\xippone}{\xipone}\right)
        -\frac{h^2}{2\xipone}
        +\max_{\gamma\in[-1,1],x\in \mathbb{R}} F(x,\gamma)
    \right]+o(N)\right).
  \end{equation}
   Applying Proposition \ref{pro:maxF}(ii, iii, iv) then implies the claim
   \eqref{eq:main_res_triv2}.
\end{proof}

Note that by the same argument using \eqref{eq:Fetastar} of Proposition \ref{pro:maxF}(i) we obtain
\begin{equation}
  \lim_{N\to\infty}\frac{1}{N}\ln \mathbb{E}\left[\mathcal{N}_{N}\right]=
  0 \quad \text{if
  } h^2>\xippone-\xipone,
\end{equation}
which is a weaker form of the triviality claimed in \eqref{eq:expNtriv}.

\section{Exact asymptotic in the trivial regime}
\label{complexity_triviality}

In this section we consider the trivial regime $h^2 > \xippone - \xipone$, and conclude the proof of the asymptotic
complexity \eqref{eq:expNtriv}.

\begin{proof}[Proof of Theorem~\ref{thm:sec_thm}(i)]
  By Proposition \ref{pro:exactN}, using $E=R=\mathbb{R}$ and
  $\Gamma=[-1,1]$, we have
  \begin{equation*}
    \E[\mathcal N_N]=
    e^{-\frac{Nh^2}{2\xipone}}\left(\frac{\xippone}{\xipone}\right)^{\frac{N-1}{2}}
    \,\frac{2N}{\sqrt{\pi
        (\xipone+\xippone)}}\,\frac{\Gamma(\frac N2)}{\Gamma(\frac{N-1}{2})}
    \int_\R\int_{[-1,1]} \frac{e^{NG(x,\gamma)}}{(1-\gamma^2)^{{3}/{2}}}
    \rho_N\left(\frac{x+h
        \gamma}{\sqrt{2\xippone}} \right)
    \dd \gamma \dd x  .
  \end{equation*}
  Following the argument of the proof of Theorem \ref{thm:sec_thm}(ii) and using \eqref{ineq: tailest} we can bound the integral outside a sufficiently large box above by $e^{-L N}$ for any $L$. Hence removing the complement of a sufficiently large box only causes vanishing error. By Laplace principle we then may further restrict to any fixed neighborhood $[ x_* - \varepsilon, x_* + \varepsilon] \times [\gamma_* - \varepsilon, \gamma_*+\varepsilon]$ of
  the maximizers  of the exponential contribution, still causing only vanishing error. Since we assume that
  $h^2>\xippone-\xipone$, it follows from
  Proposition \ref{pro:maxF}(i) that these maximizers are $\gamma_* $ and
  $x_*$, and in addition $ \frac{x_*+h\gamma_*}{\sqrt{2\xippone}}=\eta_*>\sqrt{2}$ by \eqref{etastar_at_least_root2}.
  Hence choosing $\varepsilon>0$ small enough allows the use of Lemma \ref{lem:rhoapprox}(i). Recalling the
  definition of $\eta $ from \eqref{eq:defeta}, we obtain that $\E[\mathcal N_N]$ equals
  \begin{equation*}
    e^{-\frac{Nh^2}{2\xipone}}\left(\frac{\xippone}{\xipone}\right)^{\frac{N-1}{2}}
    \frac{2\sqrt{N}}{\pi\sqrt{
        (\xipone+\xippone)}}\,\frac{\Gamma(\frac N2)}{\Gamma(\frac{N-1}{2})}
    \int_{x_* - \varepsilon}^{x_* + \varepsilon}
    \int_{\gamma_* - \varepsilon}^{\gamma_* + \varepsilon}
    \frac{(1-\gamma^2)^{-\frac{3}{2}}e^{N F(x,\gamma)}}
    {(\eta^2-2)^{\frac 14}\,(|\eta|+\sqrt{\eta^2-2})^{\frac 12}}
    \dd \gamma \dd x + o(1).
  \end{equation*}
  Note that an extra factor $2$ arises since the neighborhood
  of $(-\gamma_*,-x_*)$ by symmetry has the same contribution as the
  neighborhood of $(\gamma_*,x_*)$. Using the Laplace principle with second
  order corrections we obtain that the above is
  \begin{equation*}
    e^{-\frac{N h^2}{2\xipone}}
    \left(\frac{\xippone}{\xipone}\right)^{\frac{N-1}{2}}
    \frac{2\sqrt{N}}{\pi\sqrt{
        (\xipone+\xippone)}}\,\frac{\Gamma(\frac N2)}{\Gamma(\frac{N-1}{2})}
    \frac{(1-\gamma_*^2)^{-\frac{3}{2}} e^{N F(x_*,\gamma_*)} }
    {(\eta_*^2-2)^{\frac 14}\,(|\eta_*|+\sqrt{\eta_*^2-2})^{\frac 12}}
    \frac{2\pi}{N\sqrt{|\operatorname{det} \nabla^2 F(x_*,\gamma_*)|}} + o(1).
  \end{equation*}
  Using that
  ${\Gamma(\frac N2)}/{\Gamma(\frac{N-1}{2})}\sim \sqrt{N/2}$ and plugging in the value of
  $F(x_*,\gamma_*)$ from \eqref{eq:Fetastar} of
  Proposition~\ref{pro:maxF}(i), we benefit from cancellations
  and obtain
  \begin{equation}\label{eq:chap5comp1}
    \E[\mathcal N_N]= \frac{2\sqrt{2}\sqrt{\xipone}(1-\gamma_*^2)^{-\frac{3}{2}}}
    {\sqrt{\xippone}\sqrt{
        (\xipone+\xippone)}(\eta_*^2-2)^{\frac 14}
      \,(|\eta_*|+\sqrt{\eta_*^2-2})^{\frac 12}\sqrt{|\operatorname{det} \nabla^2 F(x_*,\gamma_*)|}}+o(1).
  \end{equation}
  Calculating the second order
  derivatives of $F$ from \eqref{eq:defF}, we obtain:
  \begin{align}
     \partial^2_x
    F(x,\gamma)&=\frac{1}{2\xippone}\left(\frac{\xipone-\xippone}{\xipone+\xippone}+\Phi''(\eta)\right),\notag          \\
     \partial^2_\gamma
    F(x,\gamma)&=-\frac{1+\gamma^2}{(1-\gamma^2)^2}+\frac{h^2}{2\xippone}\left(1+2\frac{\xippone}{\xipone}+\Phi''(\eta)\right)
    \notag\\
    &= \frac{h^2}{2\xippone}\left(-\frac{2\xippone(\xipone+h^2)(\xipone+2h^2)}{h^2(\xipone)^2}+1+2\frac{\xippone}{\xipone}+\Phi''(\eta)\right),  \\
     \partial_x\partial_{\gamma} F(x,\gamma)&=\frac{h}{2\xippone}\left(1+\Phi''(\eta)\right). \notag
  \end{align}
  We now plug in $\gamma=\gamma^*$ and $x=x_*$, so that $\eta=\eta_*$ (recall the formulas from \eqref{eq:etastar}).
  Using $\Phi''(\eta)=-\frac{\eta}{\sqrt{\eta^2-2}}$ for $\eta>\sqrt{2}$ and $\sqrt{\eta_*^2-2}=\frac{\xipone+h^2-\xippone}{\sqrt{2\xippone(\xipone+h^2)}}$(recall \eqref{etastar_at_least_root2}), one verifies that  $\Phi''(\eta_*)=-\frac{\xipone+h^2+\xippone}{\xipone+h^2-\xippone}$. Using also $\gamma_*^2 = \frac{h^2}{\xipone + h^2}$ and simplifying one obtains
  \begin{align}
     \partial^2_x
    F(x_*,\gamma_*)&=-\frac{2\xipone+h^{2}}{\left(h^{2}+\xipone-\xippone\right)\left(\xipone+\xippone\right)},\notag          \\
     \partial^2_\gamma
    F(x_*,\gamma_*)&=-\frac{\left(\xipone+2h^{2}\right)\left(\xipone+h^{2}\right)}{\left(\xipone\right)^{2}}+h^{2}\frac{h^{2}-\xippone}{\xipone\left(h^{2}+\xipone-\xippone\right)}
    \notag, \\
     \partial_x\partial_{\gamma} F(x_*,\gamma_*)&=-\frac{h}{h^{2}+\xipone-\xippone}. \notag
  \end{align}
Using these expressions in the formula for the determinant of a $2\times 2$ matrix and extracting factors $h^2+\xipone-\xippone$, $(\xipone)^2$ and $\xipone+\xippone$ one obtains that $\operatorname{det} \nabla^2 F(x_*,\gamma_*)$ equals
\begin{equation}\label{eq: det exp}
\begin{split}
&\frac{1}{h^{2}+\xipone-\xippone} \left( \frac{2\xipone+h^{2}}{\xipone+\xippone}\left(\frac{\left(\xipone+2h^{2}\right)\left(\xipone+h^{2}\right)}{\left(\xipone\right)^{2}}-\frac{h^2(h^{2}-\xippone)}{\xipone\left(h^{2}+\xipone-\xippone\right)}\right)-\frac{h^{2}}{h^{2}+\xipone-\xippone} \right)\\
&=\frac{1}{\left(h^{2}+\xipone-\xippone\right)\left(\xipone\right)^{2}\left(\xipone+\xippone\right)}
\bigg(\left(2\xipone+h^{2}\right)\left(\xipone+2h^{2}\right)\left(\xipone+h^{2}\right)
\\&\quad-\frac{h^2\left(h^{2}-\xippone\right)\xipone}{h^{2}+\xipone-\xippone}\left(2\xipone+h^{2}\right)-\frac{h^{2}\left(\xipone\right)^{2}\left(\xipone+\xippone\right)}{h^{2}+\xipone-\xippone}\bigg).
\end{split}
\end{equation}
The last two terms equal
\begin{equation*}
h^{2}\frac{\left(h^{2}-\xippone\right)\xipone}{h^{2}+\xipone-\xippone}\left(2\xipone+h^{2}\right)+\frac{h^{2}\left(\xipone\right)^{2}\left(\xipone+\xippone\right)}{h^{2}+\xipone-\xippone}
=\frac{h^{2}\left(h^{2}-\xippone\right)\xipone\left(2\xipone+h^{2}\right)+h^{2}\left(\xipone\right)^{2}\left(\xipone+\xippone\right)}{h^{2}+\xipone-\xippone}.
\end{equation*}
Remarkably a factor of $h^{2}+\xipone-\xippone$ can be pulled out of the numerator by writing
\begin{equation}
\begin{array}{l}
h^{2}\left(h^{2}-\xippone\right)\xipone\left(2\xipone+h^{2}\right)+h^{2}\left(\xipone\right)^{2}\left(\xipone+\xippone\right)\\
=h^{2}\left(h^{2}+\xipone-\xippone\right)\xipone\left(2\xipone+h^{2}\right)-h^{2}\left(\xipone\right)^{2}\left(2\xipone+h^{2}\right)+h^{2}\left(\xipone\right)^{2}\left(\xipone+\xippone\right)\\
=h^{2}\left(h^{2}+\xipone-\xippone\right)\xipone\left(2\xipone+h^{2}\right)-h^{2}\left(\xipone\right)^{2}\left(\xipone+h^{2}-\xippone\right)\\
=(h^{2}+\xipone-\xippone)\left(h^{2}+\xipone\right)h^{2}\xipone.
\end{array}
\end{equation}
Thus the determinant \eqref{eq: det exp} becomes
\begin{equation}
\frac{\left(\xipone+h^{2}\right)\left\{ \left(2\xipone+h^{2}\right)\left(\xipone+2h^{2}\right)-h^{2}\xipone\right\} }{\left(h^{2}+\xipone-\xippone\right)\left(\xipone\right)^{2}\left(\xipone+\xippone\right)}.
\end{equation}
By multiplying out and completing the square the second term of the numerator simplifies to $2(\xipone+h^2)^2$ and we obtain
\begin{equation}\label{eq: det final form}
\operatorname{det} \nabla^2 F(x_*,\gamma_*) = \frac{2\left(\xipone+h^{2}\right)^{3}}{\left(h^{2}+\xipone-\xippone\right)\left(\xipone\right)^{2}\left(\xipone+\xippone\right)}.
\end{equation}
Using \eqref{eq:etastar}, $\eta_*=\frac{1}{\sqrt{2}}(\frac{1}{z}+z)$ and $\sqrt{\eta_*^2-2}=\frac{1}{\sqrt{2}}(\frac{1}{z}-z)$ with $z=\sqrt{\frac{\xippone}{\xipone+h^2}}\leq 1$ (recall \eqref{eta representation}), the remaining terms simplify to
\begin{align}
  &(1-\gamma_*^2)^{-{3}/{2}}=
  \xipone^{-3/2}\,(\xipone+h^2)^{3/2},\\
  &\frac{1}{(\eta_*^2-2)^{1/4}
    (|\eta_*|+\sqrt{\eta_*^2-2})^{1/2}}=\sqrt{\frac{\xippone}{\xipone+h^2-\xippone}}.
  \label{eq:endaa}
\end{align}
Plugging \eqref{eq: det final form}--\eqref{eq:endaa} into
\eqref{eq:chap5comp1} we obtain $\E[\mathcal N_N]\sim 2$, which completes
the proof of
\eqref{eq:expNtriv}.
\end{proof}

\section{Characterization of maximum}\label{sec:maxchar}

In the final section of this paper we prove the asypmtotic equalities
\eqref{eq:main_res_GS_energy}--\eqref{eq:eval_hessian} describing the
properties of the field at its maximizer. The proofs are based on
the following two lemmas.

\begin{lemma}
  \label{lem:pE}
  If there exists $p\geq 2$ such that $\xi(s)=a_p s^p$, then
  $p_{x,\gamma}(E)=\1_E\big(\frac{x\xipone}{\xipone+\xippone}+h\gamma\big)$.
  Otherwise, for $x\in\R$ and $E\subset \R$,
  \begin{align}
    p_{x,\gamma}(E)=\frac{1}{\sqrt{2\pi} J}\int_{E}
    \exp\bigg(-\frac{N}{2J^2}
        \bigg(y-\bigg(\frac{\xipone}{\xipone+\xippone}x
            +h\gamma\bigg)\bigg)^2\bigg) \dd y,\label{p__expression}
  \end{align}
  where $J^2=\frac{\xione(\xipone+\xippone)-\xipone^2}{\xipone +\xippone}$
  is a positive number.
\end{lemma}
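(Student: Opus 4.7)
The plan is to reduce the claim to a classical conditional Gaussian distribution computation. Since $\sigma \cdot \mathbf{u}_N = \gamma$, we have $H_N^h(\sigma) = H_N(\sigma) + Nh\gamma$, so conditionally on $N^{-1}\partial_r H_N(\sigma) = x$ the variable $N^{-1} H_N^h(\sigma)$ differs from $N^{-1} H_N(\sigma)$ only by the deterministic constant $h\gamma$. The task therefore reduces to identifying the conditional law of $N^{-1} H_N(\sigma)$ given $N^{-1}\partial_r H_N(\sigma) = x$.

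Next I would invoke Lemma~\ref{lem:covariances_of_HN} (or, equivalently, differentiate \eqref{eq:covariance_H_N} once in the radial direction to obtain the cross-covariance) to identify the joint law of the pair $\bigl(H_N(\sigma), \partial_r H_N(\sigma)\bigr)$ as centred jointly Gaussian with variances $N\xione$ and $N(\xipone+\xippone)$ and covariance $N\xipone$. The standard formula for conditional Gaussians then gives that $H_N(\sigma)$ given $\partial_r H_N(\sigma) = Nx$ is Gaussian with mean $Nx\xipone/(\xipone+\xippone)$ and variance $N\bigl(\xione - \xipone^2/(\xipone+\xippone)\bigr) = NJ^2$. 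Dividing by $N$ and adding the constant $h\gamma$ yields the stated conditional law of $N^{-1} H_N^h(\sigma)$, with mean $x\xipone/(\xipone+\xippone) + h\gamma$ and variance $J^2/N$.

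The dichotomy in the statement corresponds to whether $J^2$ vanishes. I would establish the algebraic identity
\begin{equation*}
\xione(\xipone+\xippone) - \xipone^2 = \tfrac{1}{2}\sum_{p,q\ge 1}(p-q)^2 a_p a_q
\end{equation*}
by expanding both sides via \eqref{eq:xi_def} and symmetrizing the resulting double sum. Since $a_p \ge 0$, this shows $J^2 \ge 0$ with equality precisely when only a single $a_p$ is nonzero, i.e., when $\xi(s) = a_p s^p$ for some $p$. In that pure case the conditional law collapses to a Dirac mass at its mean, producing the indicator expression. In the mixed case $J^2 > 0$ and writing out the Gaussian density on $\R$ yields the claimed integral representation.

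The only real subtlety is the algebraic identity above, which cleanly separates the two regimes and pinpoints exactly when the conditional variance degenerates; once it is in hand the remainder is a routine application of Gaussian conditioning to a pair of scalar random variables.
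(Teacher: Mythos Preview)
Your proof is correct and follows essentially the same route as the paper: identify the joint Gaussian law of $(H_N(\sigma),\partial_r H_N(\sigma))$ via Lemma~\ref{lem:covariances_of_HN}, apply the standard conditional Gaussian formula, and then show $J^2\ge 0$ with equality exactly in the pure case. The only difference is in this last step: the paper phrases the inequality $\xione(\xipone+\xippone)\ge (\xipone)^2$ as Jensen's inequality applied to the probability weights $a_p/\sum_q a_q$, whereas you expand directly to obtain the identity $\xione(\xipone+\xippone)-(\xipone)^2=\tfrac12\sum_{p,q}(p-q)^2 a_p a_q$. These are of course the same computation (your identity is the variance expansion underlying Jensen), and your version has the minor advantage of making the equality case immediately transparent without a separate strictness argument.
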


\begin{proof}
  Using Lemma~\ref{lem:covariances_of_HN},
  \begin{equation}
    \E \dN H_N(\sigma)^2= N(\xipone+\xippone),\quad \E H_N(\sigma)^2=
    N\xione,
    \quad \E \partial_r H_N(\sigma) H_N(\sigma) =N \xipone.
  \end{equation}
  Using standard Gaussian conditioning formulas, this implies
  that, conditionally on $\dN H_N(\sigma)$,  $H_N(\sigma)$ is a Gaussian
  random variable with mean
  $\frac{\xipone}{\xipone+\xippone}\dN H_N(\sigma)$ and variance
  $N J^2$.
  By Jensen's inequality we have
  \begin{equation*}
  \begin{split}
      \xipone + \xippone &= \sum_{p\ge 1} p^2 a_p=\left(\sum_{p\ge 1} a_p\right) \frac{\sum_{p\ge 1} p^2 a_p}{\sum_{p\ge 1} a_p}\\
    &\ge  \left(\sum_{p\ge 1} a_p\right) \left(\frac{\sum_{p\ge 1} p
          a_p}{\sum_{p\ge 1} a_p }\right)^2
    = \frac{(\xipone)^2}{\xione},
  \end{split}
  \end{equation*}
  with equality only if $\xi(s)=a_p s^p$ for some $p\geq 2$. It follows
  that if $\xi$ takes this form, then $J=0$, and otherwise $J>0$. If $J=0$,
  then $H_N(\sigma)=\frac{\xipone}{\xipone+\xippone} \dN H_N(\sigma)$
  almost surely, and thus
$p_{x,\gamma}(E)=\1_E(\frac{x\xipone}{\xipone+\xippone}+h\gamma)$. If
$J^2>0$, we obtain \eqref{p__expression} as claimed.
\end{proof}

\begin{lemma}
  \label{lem:exponential_decay_complexity}
  Assume that $h^2> \xippone-\xipone$. Recall $x_*$ and
  $\gamma_*$ from \eqref{eq:etastar} and set
    $y_* =\sqrt{\xipone+h^2}$.
  Then for all closed sets $\Gamma\subset[-1,1]$ and $R,E\subset\R$ with
  $\pm(\gamma_*,x_*,y_*)\notin \Gamma\times R\times E$, we have
  $\lim_{N\to\infty}\E[\mathcal{N}_N(\Gamma, R, E)]= 0$.
\end{lemma}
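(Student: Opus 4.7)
First, I would apply Proposition~\ref{pro:exactN} together with Lemma~\ref{lem:expass} to write
\begin{equation*}
\E[\mathcal{N}_N(\Gamma, R, E)] = e^{N[\frac{1}{2}\ln(\xippone/\xipone) - h^2/(2\xipone)] + o(N)} \int_{\Gamma \times R} e^{NF(x,\gamma)(1+o(1))} p_{x, \gamma}(E) \, dx \, d\gamma.
\end{equation*}
By \eqref{eq:Fetastar}, the exponential prefactor multiplied by $e^{NF(x_*,\gamma_*)}$ combines into polynomial growth (as in the proof of Theorem~\ref{thm:sec_thm}(i), where Stirling, Lemma~\ref{lem:rhoapprox}(i), and the $2$D Laplace factor combine into $\Theta(1)$). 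Hence it is enough to show that, after subtracting $NF(x_*, \gamma_*)$ in the exponent, the integral decays as $e^{-cN}$. By \eqref{ineq: tailest}, integration may be restricted to a compact box $\mathcal{K} = [-1+\delta, 1-\delta] \times [-K, K]$ up to an $e^{-LN}$ error for arbitrary $L$.

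Next, by Proposition~\ref{pro:maxF}(i), $F$ attains its unique global maxima on $\mathcal{K}$ at $\pm(x_*, \gamma_*)$. Fix $\varepsilon > 0$ and set $B_\varepsilon^{\pm} = B_\varepsilon(\pm(x_*, \gamma_*))$. By compactness and continuity, $F \le F(x_*, \gamma_*) - \kappa(\varepsilon)$ on $\mathcal{K} \setminus (B_\varepsilon^{+} \cup B_\varepsilon^{-})$ for some $\kappa(\varepsilon) > 0$; since $p_{x, \gamma}(E) \le 1$, this region contributes at most $e^{-\kappa(\varepsilon)N/2}$, which vanishes. By the symmetry \eqref{eq:Fsym}, it remains to bound the $B_\varepsilon^{+}$ contribution.

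If $(x_*, \gamma_*) \notin \Gamma \times R$, closedness gives $B_\varepsilon^{+} \cap (\Gamma \times R) = \emptyset$ for $\varepsilon$ small enough, so this contributes zero. Otherwise $(x_*, \gamma_*) \in \Gamma \times R$, whence by hypothesis $y_* \notin E$. The key consistency identity, verified directly from \eqref{eq:etastar}, is
\begin{equation*}
\mu(x_*, \gamma_*) := \frac{x_* \xipone}{\xipone + \xippone} + h \gamma_* = \frac{\xipone + h^2}{\sqrt{\xipone + h^2}} = y_*,
\end{equation*}
where $\mu$ is the conditional mean from Lemma~\ref{lem:pE}. By closedness of $E$ and continuity of $\mu$, for $\varepsilon$ small enough one has $\operatorname{dist}(\mu(x, \gamma), E) \ge d_0 > 0$ on $B_\varepsilon^{+}$. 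Lemma~\ref{lem:pE} then yields $p_{x, \gamma}(E) = 0$ in the pure $p$-spin case and $p_{x, \gamma}(E) \le e^{-Nd_0^2/(2J^2)}$ otherwise, which gives the required exponential suppression on $B_\varepsilon^{+}$.

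The main obstacle is recognizing the consistency $\mu(x_*, \gamma_*) = y_*$ between the Kac-Rice maximizer and the asymptotic ground state energy predicted by Theorem~\ref{thm:main_thm}; once this is in hand, closedness of $E$ combined with Gaussian concentration at scale $N^{-1/2}$ automatically forces $p_{x, \gamma}(E)$ to be exponentially small on a whole neighbourhood of $(x_*, \gamma_*)$, which dominates the $\Theta(1)$ Laplace contribution there.
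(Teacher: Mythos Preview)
Your proof is correct and follows essentially the same approach as the paper's. The paper proceeds slightly more compactly by writing $p_{x,\gamma}(E)$ out as an integral over $y\in E$ via Lemma~\ref{lem:pE} and then treating the three-variable exponent $F(x,\gamma)-\tfrac{1}{2J^2}(y-\mu(x,\gamma))^2$ directly on the closed set $\Gamma\times R\times E$, noting (as you do) that its unique maximizers are $\pm(\gamma_*,x_*,y_*)$; your version keeps $p_{x,\gamma}(E)$ as a factor and handles the neighbourhood of $(x_*,\gamma_*)$ by a case split, which is equivalent. One minor remark: the prefactor you obtain from Lemma~\ref{lem:expass} is only $e^{o(N)}$, not literally polynomial, but since your integral bound is $e^{-cN}$ for a fixed $c>0$ this is harmless.
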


\begin{proof}
We first assume that $\xi(s) $ is not of the form $a_p s^p$.
  By Proposition~\ref{pro:exactN}, Lemmas~\ref{lem:pE}
  and~\ref{lem:expass},
  \begin{equation}
    \label{INTEGRAND_ER_Gamma}
    \begin{split}
      &\E\big[|\mathcal N_N(\Gamma ,R,E) |\big]                                                        \\
      & =
      \exp{\left(N\left(\frac{1}{2}\ln{\left(\frac{\xippone}
                {\xipone}\right)}-\frac{h^2}{2\xipone}+o(1)\right)\right)}
      \int_R\int_\Gamma
      \frac{e^{NG(x,\gamma)}}{(1-\gamma^2)^{\frac{3}{2}}}
      \rho_N\left( \frac{x + h \gamma}{\sqrt{2
            \xippone}} \right) p_{x,\gamma}(E) \dd \gamma \dd x
      \\ & \leq\exp{\left  (N\left (
            \frac{1}{2}\ln{\left(\frac{\xippone} {\xipone}\right)}
            -\frac{h^2}{2\xipone}+o(1)\right)\right)}
      \\ & \quad\times \int_E \int_R \int_\Gamma
      \exp{\left(N\left(F(x,\gamma)-\frac{1}{2J^2}
            \left(y-\left(\frac{\xipone x}{\xipone+\xippone}
                +h\gamma\right)\right)^2+o(1)\right)\right)} \dd \gamma \dd x \dd
      y,
    \end{split}
  \end{equation}
  By noting that
  $y_*=\frac{\xipone}{\xipone+\xippone}x_*+h\gamma_* $ and using
  Proposition~\ref{pro:maxF}(i), if
  $\pm\left(\gamma_*,x_*,y_*\right)\notin \Gamma\times R\times E$, then the
  maximum of the exponent in the integrand of \eqref{INTEGRAND_ER_Gamma}
  over $R\times \Gamma\times E$ is strictly smaller than
  $F(x_*,\gamma_*) = -\left( \frac{1}{2}\ln{\left(\frac{\xippone}
                {\xipone}\right)}-\frac{h^2}{2\xipone}\right)$ (recall \eqref{eq:Fetastar}),
  since $R\times \Gamma\times E$ is a closed set. Using \eqref{ineq: tailest} we see that the tail of the integral plays no role, and thus $\E[\mathcal{N}_N(\Gamma, R,E)]\to 0$.
  The proof in the case $\xi (x) = a_p s^p$ is similar and simpler and is
  left to the reader.

\end{proof}

We can now prove claims
\eqref{eq:main_res_GS_energy}--\eqref{eq:eval_hessian} of
Theorem~\ref{thm:main_thm}. Recall that this theorem deals with the
trivial regime, that is we assume  $h^2>\xippone-\xipone$ for the
rest of this section.

\begin{proof}[Proof of \eqref{eq:main_res_GS_energy}]
  Taking $\varepsilon >0$ and
  applying Lemma~\ref{lem:exponential_decay_complexity} with
  $E=\{y\in \R : ||y|-y_*|\ge \e\}$, $R=\R$ and $\Gamma=[-1,1]$ we  obtain
  \begin{equation*}
    \E[|\{\sigma\in S_{N-1}:\nabla_\sph
        H^h_N(\sigma)=0,
        ||N^{-1}H_{N}^h(\sigma)|-y_*|\ge\e\}|]\to 0.
  \end{equation*}
  Moreover, for any $\sigma \in S_{N-1}$ with $\sigma \cdot \uN = 0$, one
  has $\P(|N^{-1}H_{N}^h(\sigma)|>\e) \to 0$ as $N\to\infty$. As
  $\sigma^*$ is the global maximum of $H^h_N$, we thus have
  $N^{-1}H_N^h(\sigma^*)>-\varepsilon $ with probability tending to one. This implies
  \begin{equation}\label{radical convergence}
    \lim_{N\to\infty}\P(|N^{-1}H_{N}^h(\sigma^*)-y_*|>\e)= 0,
  \end{equation}
  which proves \eqref{eq:main_res_GS_energy}.
\end{proof}

\begin{proof}[Proof of \eqref{eq:main_res_overlap_with_ext_field}]
  By a similar argument as in the last proof, taking
  $\Gamma=[\gamma_*-\e,\gamma_*+\e]^c$ and $E=[y_*-\e,y_*+\e]$, so that
  $\Gamma \times \mathbb R \times E$ does not contain $\pm (\gamma_*,x_*,y_*)$,
  \begin{align*}
    \P&(|{\uN}\cdot \sigma^*-\gamma_*|>\e)\\
    &\le   \P(|{\uN}\cdot \sigma^*-\gamma_*|>\e,\,|N^{-1}H_{N}^h(\sigma^*)-y_*|\le \e)+\P(|N^{-1}H_{N}^h(\sigma^*)-y_*|>\e)\\
    & \le \E \left[ \mathcal{N}_N( \Gamma, \R ,E) \right]
    +\P(|N^{-1}H_{N}^h(\sigma^*)-y_*|>\e) \to 0,
  \end{align*}
  where in the last step we have used \eqref{radical convergence} and
  Lemma~\ref{lem:exponential_decay_complexity}.
  This proves \eqref{eq:main_res_overlap_with_ext_field}.
\end{proof}

\begin{proof}[Proof of \eqref{eq:rad_deriv}]
  Repeating the same argument, for
  $R=[x_*-\e,x_*+\e]^c$, $E=[y_*-\e,y_*+\e]$
  \begin{equation}
    \begin{split}
      \P&(|N^{-1}\dN H_{N}(\sigma^*)-x_*|>\e) \\
      &\le   \P(|N^{-1}\dN H_{N}(\sigma^*)-x_*|>\e,\,|N^{-1}H_{N}^h(\sigma^*)-y_*|\le \e)+\P(|N^{-1}H_{N}^h(\sigma^*)-y_*|>\e)\\
      &\le \E \left[ \mathcal{N}_N( [-1,1], R ,E) \right]+\P(|N^{-1}H_{N}^h(\sigma^*)-y_*|>\e)\to 0.
    \end{split}
  \end{equation}
  This implies that $N^{-1}\dN H_N(\sigma^*) \to x_*$ in probability.
  Recalling that
  $\dN H_N^h(\sigma)= \dN H_N(\sigma)+ N\,h \uN\cdot \sigma $ and using
  $ x_*+h\gamma_*=\frac{\xipone+\xippone+h^2}{\sqrt{\xipone+h^2}}$, we
  obtain \eqref{eq:rad_deriv}.
\end{proof}

To prove \eqref{eq:eval_hessian} we need a standard large deviation estimate for the largest eigenvalue of a GOE random matrix. For a matrix $A$ let $\lambda_\Max(A)$ denote the largest eigenvalue. Then
\begin{equation}
    \label{eq:lambdamaxLDGOE}
    \begin{array}{c}
    \text{for all }\e>0\text{ there is a }\delta>0 \text{ such that for $N$ large enough}\\
    \P(|\lambda_\Max(\GOE_{N}(N^{-1}))-\sqrt{2}|>\e)\leq
    e^{-\delta N},
    \end{array}
  \end{equation}
see e.g.~\cite[(2.6.31)]{anderson2010introduction}. We also need the following lemma.
\begin{lemma}
\label{lem:determinant_square} For all $\delta>0$ it holds for $N$ large enough and $\sqrt{2}+\delta \le x \le \delta^{-1}$ that
\begin{equation}\label{eq:determinant_square}
\mathbb{E}\left[\left|\det\left(x \Id_{N}+\GOE_{N}\left(N^{-1}\right)\right)\right|^2 \right]\le e^{\delta N}\mathbb{E}\left[\left|\det\left(x \Id_{N}+\GOE_{N}\left(N^{-1}\right)\right)\right|\right]^2.
\end{equation}
\end{lemma}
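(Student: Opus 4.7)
The plan is to combine the precise first-moment asymptotics available from Lemma~\ref{lem:detGOE} and Lemma~\ref{lem:rhoapprox}(i) with a Gaussian concentration-of-measure argument, exploiting that for $x$ bounded away from the spectral edge $\sqrt{2}$ the linear spectral statistic $\log|\det(xI_N+\GOE_N(N^{-1}))|$ has fluctuations of only $O(1)$ (not $O(N)$). From the two cited lemmas one obtains, uniformly in $x\in[\sqrt{2}+\delta,1/\delta]$, an asymptotic of the form
\begin{equation*}
\E[|\det(xI_N+\GOE_N(N^{-1}))|] = e^{N\mathcal{L}(x)+o(N)},
\end{equation*}
where $\mathcal{L}(x)=\int\log(x+y)\,d\mu_{sc}(y)$ is the log-potential of the semicircle law (coinciding with $\Omega(x):=x^2/2+\Phi(x)$ up to an additive constant); hence $\E[|\det|]^{2}=e^{2N\mathcal{L}(x)+o(N)}$, and it suffices to prove $\E[\det^{2}]\le e^{2N\mathcal{L}(x)+\delta N}$ for $N$ large.

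To bound the second moment on the bulk, choose a smooth bounded Lipschitz function $g:\R\to\R$ with $g=\log$ on $[\delta/2,\,\sqrt{2}+\delta/2+1/\delta]$, and set $\tilde f(A)=\sum_{i=1}^{N}g(x+\lambda_i(A))$. By the Hoffman--Wielandt inequality, $\tilde f$ is $\sqrt{N}\|g'\|_{\infty}$-Lipschitz in the Frobenius norm. Combined with Gaussian concentration for the entries of $\GOE_N(N^{-1})$ (which have variance $O(1/N)$), this gives a uniform sub-Gaussian MGF bound $\E[e^{2(\tilde f-\E\tilde f)}]\le C_\delta$ with $C_\delta$ independent of $N$. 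The weak convergence $\E\mu_N\to\mu_{sc}$ together with the choice of $g$ yields $\E\tilde f/N\to\mathcal{L}(x)$ uniformly in $x$. On the good event $G_N=\{\|A\|\le\sqrt{2}+\delta/2\}$, which by \eqref{eq:lambdamaxLDGOE} satisfies $\P(G_N^c)\le e^{-c(\delta)N}$, every factor $x+\lambda_i$ lies in the range where $g=\log$, so $\tilde f=\log|\det(xI+A)|$ and
\begin{equation*}
\E[\det^{2}\,\1_{G_N}]\le \E[e^{2\tilde f}]\le C_\delta\,e^{2N\mathcal{L}(x)+o(N)}=e^{o(N)}\E[|\det|]^{2}.
\end{equation*}

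The main obstacle is bounding $\E[\det^{2}\,\1_{G_N^c}]$: the naive estimate $|\det|\le(|x|+\|A\|)^{N}$ combined with \eqref{eq:lambdamaxLDGOE} via Cauchy--Schwarz is too weak when $\delta$ is small, because the tail rate $c(\delta)$ shrinks while the moments of $(|x|+\|A\|)^{kN}$ involve $\log(1/\delta)$. The refined argument conditions on $\lambda_\Max(A)=\mu>\sqrt{2}+\delta/2$: the remaining $N-1$ eigenvalues still approximate the semicircle, which gives $|\det(xI+A)|\approx(x+\mu)\,e^{N\mathcal{L}(x)+o(N)}$, and integrating against the density of $\lambda_\Max$ (decaying as $e^{-NI(\mu)}$ with $I(\mu)=\int_{\sqrt{2}}^{\mu}\sqrt{s^{2}-2}\,ds$) yields $\E[\det^{2}\,\1_{G_N^c}]=O(1)\cdot e^{2N\mathcal{L}(x)}$. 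A fully rigorous implementation relies on either eigenvalue rigidity/local laws for GOE or the Ben Arous--Guionnet LDP for the empirical spectral measure at scale $N^{2}$, which is strong enough to dominate any $O(N)$ fluctuation of the linear statistic $\sum_i\log|x+\lambda_i|$. A more self-contained alternative would be to establish an exact Pfaffian-type identity for $\E[\det(xI_N+\GOE_N(N^{-1}))^{2}]$ analogous to Lemma~\ref{lem:detGOE} and analyze it asymptotically using Lemma~\ref{lem:rhoapprox}.
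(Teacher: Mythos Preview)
Your proposal is incomplete: you yourself flag the main gap, namely the control of $\E[|\det|^{2}\,\1_{G_N^{c}}]$, and then only sketch possible remedies without carrying any of them out. The difficulty is real and stems from your choice of good event. Because your Lipschitz surrogate $g$ must coincide with $\log$ on the whole range of $x+\lambda_i$, you are forced to take the threshold for $G_N$ close to the spectral edge, at $\sqrt{2}+\delta/2$. Then $\P(G_N^{c})$ decays only like $e^{-c(\delta)N}$ with a rate $c(\delta)\to 0$, which cannot beat the crude bound $(|x|+\|A\|)^{2N}$ when $\delta$ is small. Your Gaussian concentration bound, being at speed $N$, gives no leverage on $G_N^{c}$.

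The paper's proof uses exactly the tool you name---the Ben Arous--Guionnet LDP for the empirical spectral measure at speed $N^{2}$---but deploys it in a way that avoids your obstacle altogether. The key difference is the truncation: instead of a Lipschitz $g$ matching $\log$ near the spectrum, the paper uses the two-sided truncation
\[
\ln_\kappa z = (-\ln\kappa)\vee(\ln z)\wedge(\ln\kappa)
\]
with $\kappa$ chosen \emph{large} (after fixing $\delta$). Since $\ln z\le\ln_\kappa z$ for all $z\le\kappa$, one has $|\det(xI+A)|^{2}\le e^{2N\int\ln_\kappa|x-\lambda|\,dL_N}$ on the event $\{|\lambda|_{\max}+|x|\le\kappa\}$, whose complement has probability $\le e^{-N(\kappa-|x|)^{2}/9}$. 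For $\kappa$ large relative to $1/\delta$, this tail is easily absorbed by the crude bound $(|x|+|\lambda|_{\max})^{2N}$. Concentration of $\int\ln_\kappa|x-\lambda|\,dL_N$ around $\int\ln_\kappa|x-\lambda|\,d\mu_{\mathrm{sc}}$ then follows from the LDP at speed $N^{2}$ (since $\ln_\kappa$ is bounded continuous), which dominates any $e^{O(N)}$ factor. A matching lower bound for $\E[|\det|]$ is obtained by the reverse inequality $\ln_\kappa z\le\ln z$ for $z\ge\kappa^{-1}$ on the high-probability event where $L_N$ is close to $\mu_{\mathrm{sc}}$ and $|\lambda|_{\max}\le\sqrt{2}+\delta/2$. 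The whole argument is short and self-contained; your Hoffman--Wielandt step is not needed.
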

\begin{proof}
Note that
$$\det\left(x \Id_{N}+\GOE_{N}\left(N^{-1}\right)\right) = \exp\left( N\int\ln{|\eta-\lambda|} \dd L_N (\lambda) \right),$$
where $L_{N}=\frac{1}{N}\sum_{i=1}^{N}\delta_{\lambda_i}$  is the empirical measure of the eigenvalues $(\lambda_i)_{i=1}^{N}$ of $\GOE_N(N^{-1})$. We follow \cite[Lemma~16]{subag2017complexity} in approximating $\ln$ by a bounded continuous function, and applying the the large deviation principle for the empirical spectral measure with speed $N^2$. For $\kappa>1$, define the function
  \begin{align*}
    \ln_{\kappa} x=
    \begin{cases}
      -\ln{\kappa},            & \text{if }  x<\kappa^{-1},            \\
      \ln{ x},                 & \text{if }  \kappa^{-1}\leq x<\kappa, \\
      \ln{\kappa},\qquad       & \text{if }  x\geq \kappa.
    \end{cases}
  \end{align*}
  Note that $\ln{x}\leq \ln_\kappa{x}$ for $x\leq \kappa$. Set
  $|\lambda|_\Max=\max_{1\leq i\leq N}|\lambda_i|$. For $x \le \delta^{-1}$ we have
  \begin{equation}
    \begin{split}
      \label{eq:iiiiiiiiii}
      &\E\left[\left|\det\left(x\,\Id_{N} +\GOE_{N}(N^{-1})\right)\right|^2\right]\\
      &=\E\left[ \exp{\left(2N\int\ln{|x-\lambda|} \dd L_{N}(\lambda)
      \right)}\right]\\
     &\le \E\left[ \exp{\left(2N \int\ln_{\kappa}{|x-\lambda|} \dd L_{N}(\lambda)
            \right)}+\exp{\left(2 N \ln{(|x|+|\lambda|_\Max)}
      \right)}\1_{\{|\lambda|_\Max+|x|>\kappa\}}\right]\\
      &\leq 2 \E\left[ \exp{\left(2N\int\ln_{\kappa}{|x-\lambda|} \dd L_{N}(\lambda)\right)}\right],
    \end{split}
  \end{equation}
where the last inequality follows by taking $\kappa$ large enough and using the estimate $\P( |\lambda|_{\max} \ge M )\le e^{-N {M^2}/{9}}$ (see Lemma~6.3 in \cite{arous2001aging}).

  We now apply the large deviation principle (with speed $N^2$) for the empirical spectral measure, see e.g.~\cite[Theorem~2.6.1]{anderson2010introduction}.
  Consider the set
    \begin{equation*}
      F=\left\{\mu\in M_1(\R):~ \left| \int \ln_\kappa|x-\lambda| \dd \mu(\lambda)-
        \int_{-\sqrt 2}^{\sqrt 2} (2\pi)^{-1}\ln_\kappa|x-\lambda| {\sqrt{2-\lambda^2}} \dd\lambda \right| > \frac{\delta}{8}\right\},
    \end{equation*}
    where $M_1(\R)$ stands for set of probability measures on $\R$. Since $\ln_\kappa(\cdot)$ is a bounded continuous function the large deviations principle implies that $\P( L_{N} \notin F) \le e^{-c'N^2}$ for some $c'>0$. Therefore
   the first
  expectation on the right-hand side of \eqref{eq:iiiiiiiiii} can be bounded from above
  \begin{equation}
    \label{first_term_estimate_in_square_determinante}
    \begin{split}
      \E&\left[ \exp{\left(2N\int\ln_{\kappa}{|x-\lambda|}
            \dd L_{N}(\lambda)\right)}\right]\\
      &\leq
      e^{N\delta/4}\exp{\left(2N\int_{-\sqrt{2}}^{\sqrt{2}} (2\pi)^{-1}\ln_{\kappa}{|x-\lambda|}
          {\sqrt{2-\lambda^2}} \dd\lambda \right)}+ e^{- c' N^2}\\
      &\leq 2 e^{N\delta/4}\left( \exp{\left(N\int_{-\sqrt{2}}^{\sqrt{2}}
          (2\pi )^{-1}
          \ln{|x-\lambda|}
          {\sqrt{2-\lambda^2}}
          \dd\lambda \right)}\right)^2.
    \end{split}
  \end{equation}
  The claim then follows since for $x > \sqrt{2} + \delta$ and $\kappa$ large enough
    \begin{equation}
    \begin{split}
      &\exp{\left(N\int_{-\sqrt{2}}^{\sqrt{2}}
          (2\pi )^{-1}
          \ln_\kappa {|x-\lambda|}
          {\sqrt{2-\lambda^2}}
          \dd\lambda \right)}\\
       &\le 2 \E\left[ \exp{\left(N\int_{-\sqrt{2}}^{\sqrt{2}}
          (2\pi )^{-1}
          \ln_\kappa{|x-\lambda|}
          {\sqrt{2-\lambda^2}}
          \dd\lambda \right)}1_{L_N \in F, |\lambda|_{\max} \le \sqrt{2} + \delta/2} \right]\\
        &\le 2 e^{N\delta/8} \E\left[ \exp{\left(N\int
              \ln_\kappa{|x-\lambda|}
              \dd L_N(\lambda) \right)}1_{L_N \in F, |\lambda|_{\max} \le \sqrt{2} + \delta/2} \right]\\
        &\le 2 e^{N\delta/8} \E\left[ \exp{\left(N\int
              \ln{|x-\lambda|}
              \dd L_N(\lambda) \right)} \right]\\
       & =2 e^{N\delta/8} \E\left[ \left|\det\left(x \Id_{N}+\GOE_{N}\left(N^{-1}\right)\right)\right| \right],
    \end{split}
  \end{equation}
  where we used the fact that $\ln_\kappa z \le \ln z$ for $z \ge \kappa^{-1}$ as well as $\P\left( L_N \notin F\right)\to 0$ and $\P\left( |\lambda|_{\max} > \sqrt{2} + \delta/2 \right)\to0$ (see \eqref{eq:lambdamaxLDGOE}).
\end{proof}

\begin{proof}[Proof of \eqref{eq:eval_hessian}]
  Define
  \begin{equation*}
    {\mathbf M}_{N-1}(\sigma):=\frac{1}{N\sqrt{2\xippone}}
    \nabla^2 H_N^h(\sigma)|_\sph,
  \end{equation*}
  so that $\mathbf M_{N-1}(\sigma ) \overset d = \GOE_{N-1}(N^{-1})$, by
  Lemma~\ref{lem:law_of_grads}(c).  Using \eqref{eq:covHess},
  \begin{align*}
    \nabla^2_\sph H_{N}^h(\sigma)&= -\dN
    H_{N}^h(\sigma)\,\Id_{N-1}+ \nabla^2
    H_N^h(\sigma)|_\sph\\
    &=-\dN H_{N}^h(\sigma)\,\Id_{N-1}+
    N\sqrt{2\xippone}{\mathbf M}_{N-1}(\sigma).
  \end{align*}
  Recalling \eqref{eq:rad_deriv},  to prove \eqref{eq:eval_hessian}
  it hence suffices to show that
  \begin{equation}
    \label{eq:lambdasigmastar}
    \lim_{N\to\infty}\lambda_\Max({\mathbf M}_{N-1}(\sigma^*)) =
    \sqrt{2},\quad
    \text{in probability}.
  \end{equation}

  To show \eqref{eq:lambdasigmastar}, we recall $\eta_*$ from
  \eqref{eq:etastar}, and define
  \begin{equation*}
    \kE_{\e}=\left\{\sigma \in S_{N-1} : |\lambda_\Max({\mathbf M}_{N-1}(\sigma))-\sqrt{2}|
      >\e,\,
      \left|\frac{\dN H_{N}^h(\sigma)}{\sqrt{2\xippone}\, N}
      -\eta_*\right|<\e\right\}.
  \end{equation*}
  By the Kac-Rice formula as in the proof of
  Proposition~\ref{pro:exactN}
  \begin{equation}
  \begin{split}
    \E&\left[\left|\left\{ \sigma\in \mathcal E_\varepsilon :
        \nabla_\sph
        H_{N}^h(\sigma)=0  \right\}\right|\right]\\
    &= \int_{S_{N-1}} \,f_{\nabla_\sph
      H^h_N(\sigma)}(0)\, \E\left[\left|{\rm
        det}\left(-\dN H_{N}^h(\sigma)\, \Id_{N-1}
        +N\sqrt{2\xippone}{\mathbf M}_{N-1}(\sigma)\right)\right|
      \1_{\mathcal E_\varepsilon }(\sigma)\right]\dd \sigma. \label{eqn:pppppppp}
      \end{split}
  \end{equation}
  To compute the expectation  inside the integral, recall that $\dN H_{N}^h(\sigma)$
  and ${\mathbf M}_{N-1}(\sigma)$ are independent by
  Lemma~\ref{lem:law_of_grads}(a). Hence,
  \begin{equation}
    \label{lambda_max_computation}
    \begin{split}
      \E&\left[\left|\det\left(-\dN
          H_{N}^h(\sigma)\, \Id_{N-1} + N\sqrt{2\xippone}
          {\mathbf M}_{N-1}(\sigma)\right)\right| \1_{\mathcal
          E_\varepsilon }(\sigma )\right]        \\
      &= (2\xippone N^2)^{\frac{N-1}{2}}  \E\left[\left|\det\left(-\frac{\dN
            H_{N}^h(\sigma)\, }{N\sqrt{2\xippone}}\Id_{N-1} +
          {\mathbf M}_{N-1}(\sigma)\right)\right| \1_{\kE_\e}(\sigma )\right]        \\
      & =(2\xippone N^2)^{\frac{N-1}{2}} \int_{\eta_*-\e}^{\eta_*+\e} \E\left[\left|\det\left(-\eta\,
          \Id_{N-1}+{\mathbf M}_{N-1}(\sigma)\right)\right|
        \1\{|\lambda_\Max({\mathbf M}_{N-1}(\sigma))-\sqrt{2}|>\e\}\right]
      \\&\qquad\times
      f_{\frac{\dN
          H_{N}^h(\sigma)}{N\sqrt{2\xippone}}}(\eta) \dd \eta      \\
      & \leq (2\xippone N^2)^{\frac{N-1}{2}} \left(\int_{\eta_*-\e}^{\eta_*+\e} \E\left[\left|\det\left(-\eta
            \Id_{N-1} +  {\mathbf M}_{N-1}(\sigma)
            \right)\right|^2\right]^{1/2} f_{\frac{\dN
            H_{N}^h(\sigma)}{N\sqrt{2\xippone}}}(\eta) \dd \eta \right)                \\
      & \qquad \times \P(|\lambda_\Max({\mathbf
            M}_{N-1}(\sigma))-\sqrt{2}|>\e)^{1/2},
    \end{split}
  \end{equation}
  where in the last step we used the Cauchy-Schwarz inequality.
  
  Using \eqref{eq:lambdamaxLDGOE} we have for some $\delta$ that
  \begin{equation}
    \label{eq:lambdamaxLD}
    \P(|\lambda_\Max({\mathbf M}_{N-1}(\sigma))-\sqrt{2}|>\e)\leq
    e^{-\delta N},
  \end{equation}
  (using $N-1$ in place of $N$ and multiplying both sides in the event of \eqref{eq:lambdamaxLDGOE} by a factor to deal with the small mismatch between matrix dimension $N-1$ and variance $N^{-1}$ of entries of ${\mathbf M}_{N-1}(\sigma)$ in \eqref{eq:lambdamaxLD}).
  Furthermore
      \begin{equation}\label{eq: det_square_estimate}
      \E\left[\left|\det\left(-\eta\,\Id_{N-1} +{\mathbf
            M}_{N-1}(\sigma)\right)\right|^2\right]\leq e^{N \delta/2 } \E\left[\left|
        \det\left(-\eta\,\Id_{N-1} +{\mathbf
            M}_{N-1}(\sigma)\right)\right|\right]^2,
    \end{equation}
for large enough $N$ by Lemma \ref{lem:determinant_square} since if $h^2>\xippone-\xipone$ then $\eta_*>\sqrt{2}$ by \eqref{etastar_at_least_root2}, dealing similarily with the mismatch of matrix dimension and variance in \eqref{eq: det_square_estimate}.
  Using \eqref{eq:lambdamaxLD} and \eqref{eq: det_square_estimate}, for all $N$ large
  enough,
  the right-hand side of \eqref{lambda_max_computation} is bounded by
  \begin{align*}
    e^{-N\delta/4  }& (2\xippone N^2)^{\frac{N-1}{2}}\int_{\eta_*-\e}^{\eta_*+\e} \E\left[\left|
      \det\left(-\eta\,\Id_{N-1} + {\mathbf
          M}_{N-1}(\sigma)\right)\right|\right]f_{\frac{\dN
        H_{N}^h(\sigma)}{N\sqrt{2\xippone}}}(\eta) \dd \eta \\
    &\leq e^{- N\delta /4} \E\left[\left|\det\left(- \dN
        H_{N}^h(\sigma) \Id_{N-1}+ N \sqrt{2\xippone} {\mathbf
          M}_{N-1}(\sigma)\right)\right|\right].
  \end{align*}
  Plugging this into \eqref{eqn:pppppppp}, we obtain for $N$ large enough
  \begin{align*}
    \P\left(\sigma^* \in E_\varepsilon \right)&\le
    \E\left[\left|\left\{ \sigma\in \mathcal E_\varepsilon :
        \nabla_\sph
        H_{N}^h(\sigma)=0  \right\}\right|\right]\\
    &\leq e^{-  N\delta /4}	\int \E\left[\left|\det\left(- \dN
        H_{N}^h(\sigma) + N \sqrt{2\xippone} {\mathbf
          M}_{N-1}(\sigma)\right)\right|\right] f_{\nabla_\sph
      H_{N}^h(\sigma)}(0)\dd \sigma\\
    &= e^{-\delta /4 N}\E[\mathcal{N}_N] \xrightarrow{N\to\infty}0,
  \end{align*}
  since $\lim_{N\to\infty}\E[\mathcal{N}_N]= 2$.
  Therefore for any $\e>0$
  \begin{equation*}
    \P(|\lambda_\Max({\mathbf M}_{N-1}(\sigma^*))-\sqrt{2}|>\e)
    \leq \P\left(\sigma^*\in \mathcal E_\varepsilon  \right)+\P\left(\left|\frac{\dN H_{N}^h(\sigma^*)}{\sqrt{2\xippone}}- \eta_*\right|\geq \e\right)
    \xrightarrow{N\to\infty}0,
  \end{equation*}
  by the previous display and \eqref{eq:rad_deriv}. This proves
  \eqref{eq:lambdasigmastar} and thus
  \eqref{eq:eval_hessian}.
\end{proof}

{{\bf Acknowledgements}: We thank Valentina Ros for useful
    discussions about the work \cite{ros2019complex}, and Antti Knowles
    and Gaultier Lambert for their helpful advice regarding the random
    matrix estimates used in this article.}
\appendix

\section{Random matrix estimates}.

In the first part of the appendix, we prove several auxiliary results about GOE random matrices
that were used in the main part of the paper.

\begin{proof}[Proof of Lemma~\ref{lem:detGOE}]
  The proof builds on the argument in  Lemma~3.3 in
  \cite{auffinger2013random}. Recall that
  $\lambda^{N,a}_1\geq \cdots\geq \lambda_{N}^{N,a}$ denote the ordered
  eigenvalues of $\GOE_{N}(a)$. The distribution $Q_{n,a}$
  of $(\lambda_i^{N,a})_{i\leq N}$ can be written expicitly,
  see \cite[Theorem 3.3.1]{mehta2004random}:
  \begin{align}\label{explicit formula ev}
    Q_{N,a}(\dd \lambda)=\frac{N!}{Z_{N}(a)}e^{-\frac{1}{2a}\sum_{i=1}^N
      \lambda_i^2}\,\Delta_{N}(\lambda)\, \mathbf{1}\{\lambda_1< \cdots <
      \lambda_N\} \,\prod_{i=1}^N \dd \lambda_i,
  \end{align}
  where (see \cite[(3.3.10)]{mehta2004random})
  \begin{equation}
    Z_{N}(a)=(2\pi)^{N/2} a^{N(N+1)/4} \prod_{j=1}^N
    \frac{\Gamma\left(1+\frac{j}{2}\right)}{\Gamma\left(\frac{3}{2}\right)},
  \end{equation}
  and $\Delta_N(\lambda)=\prod_{1\leq i<j\leq N} |\lambda_i-\lambda_j|$
  is  the van der Monde determinant.  We write
  $Z_N=Z_N(N^{-1})$ and $Z_{N-1}'=Z_{N-1}(N^{-1})$ and define
  ${\mathbf{T}_{N}}=\{(x_i)_{i=1}^N\subset \R^N:~x_1<\cdots<x_N\}$.
  Then,
  \begin{equation}
    \label{middle_expression_of_determinant_GOE}
    \begin{split}
      \E[\det&(x\Id_{N-1}+\GOE_{N-1}(N^{-1}))]
       =\int \prod_{i=1}^{N-1} |x-\lambda_i |\, Q_{N-1,N^{-1}}(\mathrm d
        \lambda)                                                            \\
      & =\frac{(N-1)!}{Z_{N-1}'}\int \prod_{i=1}^{N-1} \left|x-\lambda_i
      \right|e^{-\frac{N}{2}\sum_{i=1}^{N-1}\lambda_i^2}\,
      \Delta_{N-1}(\lambda) \mathbf{1}_{\mathbf{T}_{N-1}}(\lambda)
      \,\prod_{i=1}^{N-1} \dd \lambda_i                                   \\
      & =\sum_{j=1}^N \frac{(N-1)!}{Z_{N-1}'}\int \prod_{i=1}^{N-1}
      \left|x-\lambda_i
      \right|\,e^{-\frac{N}{2}\sum_{i=1}^{N-1}\lambda_i^2}\,
      \Delta_{N-1}(\lambda)                                                       \\
      & \qquad\qquad\qquad\qquad \times \mathbf{1}_{\{\lambda_1<\cdots
        < \lambda_{j-1}< x < \lambda_j < \cdots < \lambda_{N-1}\}}
      \,\prod_{i=1}^{N-1} \dd \lambda_i,
    \end{split}
  \end{equation}
  with the convention that $\lambda_0=-\infty$ and
  $\lambda_N=\infty$.
  We note that
  $\Delta_{N-1}(\lambda)\,\prod_{i=1}^{N-1}
    |x-\lambda_i|=\Delta_{N}(\nu)$, with
  $\nu=\nu(\lambda,x)=(\lambda_1,\dots, \lambda_{j-1},x,\lambda_{j}, \dots, \lambda_{N})$.
  Having this in mind, since the dirac delta function $\delta(x-y)$ enable us to
  exchange $x$ and $y$ freely,
  \eqref{middle_expression_of_determinant_GOE} is equal to
  \begin{align}
    & \qquad \sum_{j=1}^N \frac{(N-1)!}{Z_{N-1}'}\int
    \delta(x-\nu_{j}) \,
    \exp{\left(-\frac{N}{2}\sum_{i\in\{1,\cdots,N\}\backslash\{j\}}\nu_i^
        2\right)}\,\Delta_{N}(\nu)\mathbf{1}_{\mathbf{T}_{N}}(\nu) \,\prod_{i=1}^{N}
    \dd \nu_i\notag       \\
    & =\frac{Z_N}{N\,Z_{N-1}'} e^{\frac{N}{2}x^2} \sum_{j=1}^N  \frac{N!}{Z_N}
    \int \delta(x-\nu_{j}) \,
    \exp{\left(-\frac{N}{2}\sum_{i=1}^N
        (\nu_i)^2\right)}\,\Delta_N(\nu)\,
   \mathbf{1}_{\mathbf{T}_{N}}(\nu) \,\prod_{i=1}^{N} \dd
    \nu_i\notag   \\
    & =\frac{Z_{N}}{Z_{N-1}'} e^{\frac{N}{2}x^2}   \int
    \left[\frac{1}{N}\sum_{j=1}^N \delta(x-\nu_{j})\right] Q_{N,N^{-1}}(\dd
      \nu)\notag\\
    & =\frac{Z_{N}}{Z_{N-1}'} e^{\frac{N}{2}x^2}   \int
        \left[\frac{1}{N}\sum_{j=1}^N \delta(x-\lambda_{j})\right] Q_{N,N^{-1}}(\dd
        \lambda),
  \label{asymptotic-formula}
  \end{align}
  where in the last line now $\lambda \in \mathbb{R}^N$. Since
  $\int_A\left(\sum_{i=1}^N\delta\left(x-\lambda_i\right)\right)
  \dd x= |\{1\leq i\leq N|~\lambda_i\in A\}|$, we have
  \begin{align*}
    \mu_{N,N^{-1}}(A)&=
    \E\left[\int_A \frac{1}{N}\sum_{i=1}^N\delta(x-\lambda_i^{N,N^{-1}}) \dd
    x \right]\\
    &= \int \int_A \left[\frac{1}{N}\sum_{j=1}^N \delta(x-\lambda_{j})\right]\dd x\,
    Q_{N,N^{-1}}(\dd \lambda) \\
    &=\int_A \int \left[\frac{1}{N}\sum_{j=1}^N \delta(x-\lambda_{j})\right]
    Q_{N,N^{-1}}(\dd \lambda) \dd x,
    \end{align*}
which implies
  \aln{\label{rhoN formula}
  \rho_N(x)=\rho_{N,N^{-1}}(x)=\int \left[\frac{1}{N}\sum_{j=1}^N
      \delta(x-\lambda_{j})\right] Q_{N,N^{-1}}(\dd \lambda).
      }
  Thus, the right-hand side of \eqref{asymptotic-formula} is equal to
  $\frac{Z_N}{Z'_{N-1}}e^{\frac{N}{2}x^2}\rho_N(x).$

  Note that
  \begin{align*}
    \frac{Z_{N}}{Z_{N-1}'}&= \sqrt{2\pi} N^{-N/2}
    \frac{\Gamma(1+\frac{N}{2})}{\Gamma\left(\frac{3}{2}\right)}\\
    &=\sqrt{2}\, N^{-(N-2)/2}\, \Gamma\left(\frac{N}{2}\right),
  \end{align*}
where we have used  $\Gamma(3/2)=\sqrt{\pi}/2$ and $\Gamma(1+x)=x\Gamma(x)$, which completes the proof.
\end{proof}


To prove Lemma~\ref{lem:rhoapprox} we use a formula for $\rho_N$ in terms of Hermite polynomials. Let $\phi_n(x)=(2^n n! \sqrt{\pi})^{-\frac{1}{2}}\mathcal{H}_n(x) e^{-\frac{x^2}{2}}$, where $(\mathcal{H}_n(x))_{n\geq 0}$ are the Hermite polynomials.

\begin{lemma}
  It holds that
  \begin{equation}\label{eq: rho A B}
     \rho_N(x)=N^{-1/2}(A_N(x)+B_N(x))
  \end{equation}
    where
\begin{equation}\label{eq: A def}
    A_N(x)=\sum_{i=0}^{N-1} \phi_i(\sqrt{N}x)^2=N\phi_N(\sqrt{N}x)^2-\sqrt{N(N+1)}\phi_{N-1}(\sqrt{N}x)\phi_{N+1}(\sqrt{N}x),
\end{equation}
and
\begin{equation}\label{eq: B def}
    B_N(x)=S_N(x)+\alpha_N(x),\\
\end{equation}
for
\begin{equation}\label{eq: SN def}
    S_N(x)=\sqrt{\frac{N}{2}}\phi_{N-1}(\sqrt{N}x)J_N(x),\\
\end{equation}
and
\begin{equation}\label{eq: alpha def}
    \alpha_N(x)=
    \begin{cases}
      \phi_{N-1}(\sqrt{N}x)\left(\int^\infty_{-\infty} \phi_{N-1}(t)\dd t\right)^{-1} &\text{if $N$ is odd},\\
      0&\text{if $N$ is even}
    \end{cases}
\end{equation}
where
\begin{equation}
  \label{eq: JN def}
    J_N(x)=\int^\infty_{-\infty} \frac{{\rm sgn}(\sqrt{N}x-t)}{2}\phi_N(t)\dd t
    = \begin{cases}
    -  {\rm sgn}(x) \sqrt{N}\int^\infty_{|x|} \phi_N(\sqrt{N}\,t)\dd t &\text{if $N$ is odd,}\\
    {\rm sgn}(x) \sqrt{N}\int_0^{|x|} \phi_N(\sqrt{N}\,t)\dd t &\text{if $N$ is even}.
    \end{cases}
  \end{equation}
\end{lemma}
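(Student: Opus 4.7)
The plan is to derive this as the classical formula for the GOE one-point correlation function, following Mehta's method \cite{mehta2004random}. From \eqref{rhoN formula} and \eqref{explicit formula ev}, after the change of variables $\lambda_i \mapsto \lambda_i/\sqrt{N}$, the measure $Q_{N,N^{-1}}$ becomes (up to a Jacobian factor) the joint eigenvalue law of the standard GOE with joint density proportional to $\prod_{i} e^{-x_i^2/2}\,\Delta_N(x)$ on $\{x_1<\cdots<x_N\}$. This reduces the lemma to the identity $R_1^{\mathrm{std}}(\sqrt{N}x) = A_N(x)+B_N(x)$, where $R_1^{\mathrm{std}}$ denotes the one-point correlation function of the standard GOE.

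First, I would apply de Bruijn's integration identity to rewrite the ordered integral $\int_{x_1<\cdots<x_N}\Delta_N(x)\prod e^{-x_i^2/2}\,dx$ as a Pfaffian. Carrying this out requires a basis of polynomials skew-orthogonal with respect to the antisymmetric inner product $\langle f,g\rangle = \tfrac12\iint\sgn(x-y)f(x)g(y)\,e^{-(x^2+y^2)/2}\,dx\,dy$, which for the GOE can be constructed from pairs of Hermite polynomials of consecutive degrees (see \cite[Ch.~7]{mehta2004random}). A standard Pfaffian formula then expresses the $k$-point correlation function in terms of a $2\times 2$ matrix kernel; specializing to $k=1$ with coincident arguments yields $R_1^{\mathrm{std}}$.

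When the resulting $2\times 2$ kernel is written out explicitly, the diagonal contribution collapses via the skew-orthogonal normalization to $\sum_{i=0}^{N-1}\phi_i(\sqrt{N}x)^2$, which gives the first form of $A_N$. The off-diagonal contribution involves the integral operator $(\epsilon f)(x)=\tfrac12\int\sgn(x-t)f(t)\,dt$ applied to $\phi_N$, producing precisely $S_N(x) = \sqrt{N/2}\,\phi_{N-1}(\sqrt{N}x)J_N(x)$. The alternative form of $A_N$ in terms of $\phi_N^2$ and $\phi_{N-1}\phi_{N+1}$ then follows from the confluent Christoffel-Darboux identity for Hermite functions, combined with the recurrence $\phi_n'(x)=\sqrt{n/2}\,\phi_{n-1}(x)-\sqrt{(n+1)/2}\,\phi_{n+1}(x)$, which telescopes the $N$-term sum into just three consecutive Hermite functions.

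The main technical obstacle is the parity of $N$. The skew-orthogonal polynomial construction naturally groups polynomials in pairs of consecutive degree, so the Pfaffian representation works cleanly when $N$ is even. When $N$ is odd, the highest-degree polynomial is unpaired, and completing the Pfaffian structure forces one to augment the basis by a non-polynomial ``reference function'' (essentially, the constant function integrated against the Gaussian weight). The projection of this auxiliary function onto the relevant entry of the $2\times 2$ kernel contributes precisely the correction $\alpha_N(x) = \phi_{N-1}(\sqrt{N}x)\big/\int\phi_{N-1}(t)\,dt$, whereas for even $N$ no such augmentation is required and $\alpha_N$ vanishes. Verifying that this correction appears with exactly the stated normalization, and handling the associated bookkeeping in the Pfaffian expansion, is the most delicate part of the argument.
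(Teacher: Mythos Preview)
Your proposal is correct and takes essentially the same approach as the paper: both rely on Mehta's derivation of the GOE one-point function via the Pfaffian/skew-orthogonal polynomial machinery. The paper's own proof is simply a citation to the relevant formulas in \cite[(7.2.19), (7.2.27), (7.2.28), (7.2.30), (7.2.32) and p.~511]{mehta2004random}, together with the remark that the second equality for $A_N$ is on p.~511 of Mehta and that the alternative form of $J_N$ follows from the parity of $\phi_N$; you have sketched out in more detail what those cited computations actually contain.
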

\begin{proof}
  This follows from \cite[(7.2.19), (7.2.27), (7.2.28), (7.2.30), (7.2.32) and pp 511]{mehta2004random} (after translating to our normalization). The second equality of \eqref{eq: A def} can be found on \cite[page 511]{mehta2004random}, and the last identity is due to $\phi_N$ and $\mathcal{H}_N$ being even functions for $N$ even and odd functions for $N$ odd.
\end{proof}

The proof of Lemma~\ref{lem:rhoapprox} is then based on applying the following bounds for Hermite polynomials.

\begin{lemma}
Fix $\delta>0$.
\begin{enumerate}
\item Uniformly for $x\in [0,\sqrt{2}(1-\delta)]$ we have
\begin{equation}\label{sc asymptotics}
  N^{-1/2} A_N(x) \to \frac{1}{2\pi}\sqrt{2-x^2}.
\end{equation}

\item Uniformly for $x\in [0,\sqrt{2}(1-\delta)]$ we have
\aln{\label{inside asymptotics}
  \phi_N(\sqrt{N}x)=O(N^{-1/4})
  }
\item Uniformly for $x\in [\sqrt{2}(1+\delta), \infty)$, we have
\aln{\label{outside asymptotics}
  \phi_N(\sqrt{N}x)=\frac{e^{N\Phi(x)}g(x)}{\sqrt{4\pi \sqrt{2N}}}\left(1+o(1)\right)\text{ where } g(x)=\left|\frac{x-\sqrt{2}}{x+\sqrt{2}}\right|^{1/4}+\left|\frac{x+\sqrt{2}}{x-\sqrt{2}}\right|^{1/4}.
  }
    \item  Uniformly for $x\in[\sqrt{2}(1-\delta),\sqrt{2}(1+\delta)]$,
\aln{\label{middle asymptotics}
  \phi_N(\sqrt{N}x)&=(2N)^{-1/4}\left( (x+\sqrt{2})(\sqrt{2}N^{2/3})^{1/4} |\hat{f}_N(x)|^{1/4}{\rm Ai}(f_N(x))(1+o(1)))\right.\nonumber\\
  &\qquad\qquad\left.- (x+\sqrt{2})^{-1}(\sqrt{2}N^{2/3})^{-1/4}  |\hat{f}_N(x)|^{-1/4} {\rm Ai}'(f_N(x))(1+o(1)) \right),
}
where ${\rm Ai}(x)$ is the Airy function, and $f_N(x)=\sqrt{2} N^{2/3} (x-\sqrt{2}) \hat{f}_N(x)$  with an analytic function $\hat{f}_N$ such that, if $\delta$ is small enough then there are constants $c < C$ such that $0 < c \le \hat{f}_N(x) < C < \infty$ uniformly in  $x\in [\sqrt{2}(1-\delta),\sqrt{2}(1+\delta)]$.
\item It holds that
\aln{\label{integral hermite fcn}
\int_{0}^\infty \phi_N(x) \dd x\sim (2N)^{-1/4},\,\int_{\R} \phi_{N-1}(x) \dd x =
\begin{cases}
  2(2N)^{-1/4}(1+o(1))& \text{if $N$ is odd}\\
  0& \text{if $N$ is even.}
\end{cases}
  }
\item Uniformly for $x\in\mathbb{R}$
\begin{equation}
\int_{-\infty}^x \phi_N(x) \dd x = O(N^{-1/4}),
\end{equation}
and
\ben{\label{estimate JN}
  \sup_{x\geq 0}|J_N(x)|=O(N^{-1/4}).
  }
\item There exists $c>0$ such that for $N$ large enough,
\ben{\label{(4.16)}
\int_{\sqrt{2}(1+\delta)}^\infty|\phi_N(\sqrt{N} x)|\dd x\leq e^{-c N}.
}
\end{enumerate}
\end{lemma}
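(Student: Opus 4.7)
The plan is to derive all seven bounds from the classical Plancherel--Rotach asymptotics for Hermite functions $\phi_N(\sqrt{N} x)$, combined with the Christoffel--Darboux identity and a few elementary integral identities. The Plancherel--Rotach asymptotics describe $\phi_N(\sqrt{N} x)$ on three overlapping regions: the bulk $|x|\le\sqrt{2}(1-\delta)$ (oscillatory Liouville--Green regime), a neighbourhood $|x-\sqrt{2}|\le\delta$ of the soft edge (Airy transition), and the tail $|x|\ge\sqrt{2}(1+\delta)$ (exponentially decaying WKB regime). Parts (2), (3), and (4) will follow as direct consequences of these three expansions; part (3) additionally requires verifying that the leading WKB phase matches the function $\Phi(x)$ of \eqref{eq:phi_def}, which is a standard turning-point calculation.

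For part (1) the approach is to substitute the bulk asymptotic $\phi_N(\sqrt{N}x)\sim c\,N^{-1/4}(2-x^2)^{-1/4}\cos\theta_N(x)$ for explicit phases $\theta_N$ into the Christoffel--Darboux form of $A_N$ provided by the second equality in \eqref{eq: A def}. Using the phase relations $\theta_{N+1}(x)-\theta_{N-1}(x)\to 2\arccos(x/\sqrt{2})$ (up to lower-order corrections) and expanding via trigonometric identities, the rapidly oscillating cross term between $N\phi_N^2$ and $\sqrt{N(N+1)}\phi_{N-1}\phi_{N+1}$ cancels, leaving the limit $N^{-1/2}A_N(x)\to\frac{1}{2\pi}\sqrt{2-x^2}$ uniformly on $[0,\sqrt{2}(1-\delta)]$.

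Part (5) will be established from the Hermite generating function evaluated at $x=0$: $\sum_{n\ge 0}\mathcal{H}_n(0)\,t^n/n!=e^{-t^2}$ yields the closed form $\mathcal{H}_{2k}(0)=(-1)^k(2k)!/k!$ and $\mathcal{H}_{2k+1}(0)=0$, and the integral of $\phi_{N-1}$ over $\R$ can be computed explicitly via the Gaussian identity $\int_\R e^{2xt-t^2}e^{-x^2/2}\dd x=\sqrt{2\pi}\,e^{t^2}$; Stirling's formula then gives the asserted $(2N)^{-1/4}$ asymptotic. For part (6) the plan is to split the integration domain at $\pm\sqrt{2}(1\pm\delta)$: on the tail, use (3) together with the quadratic decay \eqref{eq:PhiBound} of $\Phi$; on the edge region (width $O(N^{-2/3})$), the integrand is $O(N^{-1/4})$ by (4); on the bulk, use (5) for the total integral up to the edge, together with cancellation from the oscillatory asymptotic of (2) applied to the cumulative contribution.

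Part (7) is an immediate consequence of (3): the integral is bounded above by a constant multiple of $N^{-1/4}\int_{\sqrt{2}(1+\delta)}^\infty e^{N\Phi(x)}\dd x$, and since $\Phi(x)\le\Phi(\sqrt{2}(1+\delta))<0$ with quadratic decay by \eqref{eq:PhiBound}, a Laplace-type estimate yields the $O(e^{-cN})$ bound. The main obstacle will be the careful derivation of the edge asymptotics in part (4): constructing the auxiliary function $f_N$ via the Liouville--Green transformation, verifying the claimed upper and lower bounds on $\hat f_N$, and matching the Airy behaviour smoothly to the bulk and tail expansions so that the three regimes together control $\phi_N$ uniformly on all of $\R$.
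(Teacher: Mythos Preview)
Your plan is sound in outline, but it differs substantially from the paper's proof, which is essentially a citation proof: each of the seven items is identified as a special case of a known result from the literature on orthogonal-polynomial asymptotics, chiefly Deift--Gioev (\emph{Universality at the edge...}, 2007). Part (1) is dispatched by observing that $N^{-1/2}A_N(x)$ is exactly the GUE expected spectral density and quoting a uniform semicircle convergence result; parts (2)--(7) are read off from the Deift--Gioev uniform asymptotics for $\phi_N$ in the bulk, at the edge, and in the tail, after specialising their general parameters to the Hermite case. No derivations are carried out.

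Your approach, by contrast, is to rederive these Plancherel--Rotach-type expansions essentially from scratch (Liouville--Green transformation for the edge, generating-function identities for the integrals, phase-cancellation in Christoffel--Darboux for the bulk density). This is legitimate and more self-contained, but it is far more work than what the paper does, and some of your sketches are optimistic. In particular: for (1), the phase-cancellation argument you outline is correct but delicate to make uniform; the paper sidesteps this entirely by recognising $N^{-1/2}A_N$ as the GUE density. For (5), your generating-function computation handles $\int_\R\phi_{N-1}$ cleanly when $N$ is odd, but $\int_0^\infty\phi_N$ for odd $N$ (where $\phi_N$ is odd and the full-line integral vanishes) needs a separate argument you have not indicated. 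For (6), the ``cancellation from the oscillatory asymptotic'' in the bulk is the genuinely nontrivial step and would need to be spelled out; the paper again just cites Deift--Gioev. If your goal is to match the paper, the honest summary is that this lemma is treated there as a compendium of known facts, not as something proved.
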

\begin{proof}$ $

\begin{enumerate}
    \item The density of the expected empirical spectral distribution for the GUE ensemble (i.e. the object corresponding to $\rho_N$ for this ensemble) is precisely $N^{-1}A_N(x)$ \cite[(6.2.10)]{mehta2004random}. It is well-known that this density (whether for GUE or GOE) converges to the semi-circle law density $\frac{1}{2\pi}\sqrt{2-x^2}$ point-wise, and \cite[Theorem, page 16]{lindseyasymptotics} shows for the GUE that this convergence is uniform on compact subsets of $(-\sqrt{2},\sqrt{2})$.
\end{enumerate}

 The remaining estimates are from \cite{deift2007universality}, where they are given for general orthogonal polynomials in terms of the quantities $c_N, d_N,h_N(x)$ \cite[(2.3),(2.4),(2.6)]{deift2007universality}. Results for the standard Hermite polynomials are obtained by setting (in  the notation of \cite{deift2007universality}) $m=1$, $\kappa_2=1$, $\kappa_k=0$ for $k\neq 2$. In this special case  $c_N=\sqrt{2N}$, $d_N=0$, $h_N=4$ by \cite[pp 1501, Remark 3.]{deift1999strong}. To obtain the estimates for our normalization of the GOE the variable $x$ in the formulas of \cite{deift2007universality} should furthermore be replaced by $x/\sqrt{2}$.

\begin{enumerate}
    \setcounter{enumi}{1}
 \item
 This follows directly from \cite[pp 38 penultimate display]{deift2007universality}.
 \item This follows from \cite[pp 28 first display]{deift2007universality} using \eqref{eq: phi deriv}.
 \item This is due to \cite[(4.9)-(4.10) and points (1), (2), (4), (5) on page 29]{deift2007universality}.
 \item This is due to \cite[(4.14)]{deift2007universality} and the odd-/evenness of $\phi_N$.
 \item The first claim is due to \cite[display after (4.15)]{deift2007universality} and the second follows immediately using \eqref{eq: JN def}.
 \item This is due to \cite[(4.16)]{deift2007universality}.
\end{enumerate}
\end{proof}
When using \eqref{middle asymptotics} we will also use the asymptotics of the Airy function and its derivative.
\begin{lemma}{\cite[pp 448--449]{Abramowitz}}
  It holds that as $y\to \infty$,
  \aln{
  \begin{split}\label{asymptotics of Airy infty}
   &{\rm Ai}(y)\sim \frac{1}{2\sqrt{\pi}y^{1/4}} e^{-\frac{2}{3}y^{3/2}},\\
   &{\rm Ai}'(y)\sim -\frac{y^{1/4}}{2\sqrt{\pi}} e^{-\frac{2}{3}y^{3/2}},
   \end{split}\\
   \begin{split}\label{asymptotics of Airy -infty}
   &{\rm Ai}(-y)= \frac{1}{\sqrt{\pi}y^{1/4}} \sin{\left(\frac{2}{3}y^{3/2}+\frac{\pi}{4}\right)}+o(|y|^{-1/4}),\\
   &{\rm Ai}'(-y)= -\frac{y^{1/4}}{\sqrt{\pi}} \cos{\left(\frac{2}{3}y^{3/2}+\frac{\pi}{4}\right)}+o(|y|^{1/4}).
    \end{split}
  }
\end{lemma}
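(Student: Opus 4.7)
The plan is to derive these four asymptotic formulas via the method of steepest descent applied to the classical integral representation
\begin{equation*}
  {\rm Ai}(x) = \frac{1}{2\pi}\int_{-\infty}^\infty e^{i(xt + t^3/3)}\,dt,
\end{equation*}
which can be verified to solve the Airy ODE $u'' = xu$ with the correct normalization. For $y$ large and positive I would rescale via $t = \sqrt{y}\,s$ to obtain
\begin{equation*}
  {\rm Ai}(y) = \frac{\sqrt{y}}{2\pi}\int_{-\infty}^\infty e^{y^{3/2}\psi(s)}\,ds,
  \qquad \psi(s) = i\big(s + s^3/3\big),
\end{equation*}
so that for large $y$ the integral is dominated by the saddle points $\psi'(s) = i(1+s^2) = 0$, i.e.\ $s = \pm i$. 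Deforming the contour into the upper half plane through $s_0 = i$ along the steepest-descent direction, and using $\psi(i) = -2/3$ and $\psi''(i) = -2$, a Gaussian saddle-point approximation gives the leading term $\tfrac{1}{2\sqrt{\pi}\,y^{1/4}} e^{-(2/3) y^{3/2}}$. For ${\rm Ai}'(y)$ I would differentiate under the integral, inserting a factor $it$ which evaluates to $-\sqrt{y}$ at the saddle $t = i\sqrt{y}$, and repeat the steepest-descent analysis to obtain the claimed prefactor $-y^{1/4}/(2\sqrt{\pi})$.

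For the $y\to -\infty$ regime (i.e.\ ${\rm Ai}(-y)$ with $y > 0$), the rescaled phase is
\begin{equation*}
  {\rm Ai}(-y) = \frac{\sqrt{y}}{2\pi}\int_{-\infty}^\infty e^{i y^{3/2} (s^3/3 - s)}\,ds,
\end{equation*}
and now the two saddle points $s = \pm 1$ lie on the real line with purely imaginary phase, so one is in the oscillatory regime of stationary phase rather than exponential decay. Each saddle contributes a term of amplitude $y^{-1/4}$ and phase $\pm \tfrac{2}{3}y^{3/2}$; accounting for the $e^{\pm i\pi/4}$ orientation factors picked up by rotating the contour into the respective steepest-descent directions and summing the two contributions would produce the cosine/sine asymptotics, with the universal phase shift $\pi/4$ arising from the quadratic saddle. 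The same differentiation trick handles ${\rm Ai}'(-y)$.

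The main obstacle is the rigorous justification of the contour deformation and the uniform control of the error terms, particularly in the oscillatory regime where interference between the two saddle contributions must be tracked to obtain the stated $o(|y|^{-1/4})$ and $o(|y|^{1/4})$ remainders. A cleaner alternative that sidesteps the contour bookkeeping is to apply the WKB (Liouville--Green) ansatz directly to the ODE $u'' = yu$, use parabolic-cylinder comparison functions near the turning point $y = 0$ to fix the connection constants, and match to the oscillatory regime for $y < 0$. Either route is classical and is recorded in the standard reference cited in the statement; for the present paper it suffices to appeal to that reference.
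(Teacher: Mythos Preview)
Your sketch of the steepest-descent/stationary-phase derivation is standard and essentially correct, but it goes well beyond what the paper does: the paper gives no proof of this lemma at all and simply cites \cite[pp~448--449]{Abramowitz}. Your closing sentence (``for the present paper it suffices to appeal to that reference'') is in fact the entire content of the paper's treatment. So there is nothing to compare against; your proposal is strictly more than what appears in the paper.
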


Due to the symmetry $\rho_N(x)=\rho_N(-x)$ it suffices to consider $x\geq 0$ in the proof of Lemma~\ref{lem:rhoapprox}. We furthermore consider $\e>0$ arbitrary, and chose a sufficiently small $\delta>0$ such that
 \aln{\label{choice of delta}
 \inf_{x\in[\sqrt{2}(1-\delta),\sqrt{2}(1+\delta)]}\Phi(x)>-\e/2.
 }
The claims of Lemma~\ref{lem:rhoapprox} then follow from the following two estimates.

    \begin{equation}\label{rhoN asympt 2}
      \rho_N(x)=
      \frac{\exp{(N\,\Phi(x))}}{2\sqrt{\pi\,N}\,(x^2-2)^{\frac 1 4}
        \,(|x|+\sqrt{x^2-2})^{\frac{1}{2}+o(1)}} \text{ uniformly for $x \in [\sqrt{2}(1+\delta),\infty)$},
    \end{equation}
    and
   \begin{equation}
      \label{Upper_bound_of_rho_N 2}
       e^{-N\frac \e 2} \leq \rho_N(x)\leq e^{N\frac{\e}{2}},\, \forall x\in\ [0, \sqrt{2} + \delta),
    \end{equation}
(\eqref{rhoN asympt 2} implies \eqref{Upper_bound_of_rho_N} in the range $[\sqrt{2}(1+\delta),\infty)$ since $$ |\ln( 2\sqrt{\pi\,N}\,(x^2-2)^{\frac 1 4}
        \,(|x|+\sqrt{x^2-2}))| \le  \e |\Phi(x)| N ,$$ uniformly for $x$ in the range, for $N$ large enough).
The proof of \eqref{Upper_bound_of_rho_N 2} is further subdivided into 4 subcases:
\begin{enumerate}
    \item $x \in [0,\sqrt{2} - \delta)$
    \item $x \in [\sqrt{2} - \delta,\sqrt{2} - N^{-4/7})$
    \item $x \in [\sqrt{2} - N^{-4/7},\sqrt{2} + N^{-4/7})$
    \item $x \in [\sqrt{2} + N^{-4/7},\sqrt{2} + \delta)$
\end{enumerate}

In the range $[\sqrt{2} + \delta),\infty)$ the term $B_N(x)$ is dominant, and is estimated using \eqref{outside asymptotics} to obtain \eqref{rhoN asympt 2}. In case 4 the term $B_N(x)$ remains dominant, but is now estimated instead using \eqref{middle asymptotics}.

In case 1 the term $A_N(x)$ is dominant and is estimated with \eqref{sc asymptotics}. In case 2 the term $A_N(x)$ remains dominant and is now estimated instead using \eqref{middle asymptotics}.

Finally in the the intermediate case 3 the terms $A_N(x)$ and $B_N(x)$ are of similar order and for the upper bound crudely bounding the Hermite polynomial terms using \eqref{middle asymptotics} suffices, while for the lower bound we use a different method involving the formula \eqref{explicit formula ev} to compare $\rho_N$ in this range with $\rho_N$ in the range $[\sqrt{2} - \delta,\sqrt{2} - N^{-4/7})$.


\begin{proof}[Proof of \eqref{rhoN asympt 2}]
If $N$ is even we have $J_N(x)=\sqrt{N}\int^\infty_0 \phi_N(\sqrt{N} x)\dd x -\sqrt{N}\int^\infty_x \phi_N(\sqrt{N} x)\dd x\sim (2N)^{-1/4}$ uniformly by \eqref{eq: JN def}, \eqref{integral hermite fcn} and  \eqref{(4.16)}, and thus by \eqref{eq: B def}-\eqref{eq: alpha def}
\begin{equation}\label{eq: B est}
B_N(x) \sim \frac{(2N)^{1/4}}{2} \phi_{N-1}(\sqrt{N}x) \text{ uniformly for } x\ge \sqrt{2}+\delta.
\end{equation}
For odd $N$, $J_N(x)$ decays exponentially by \eqref{eq: JN def} and \eqref{(4.16)}. 
Hence, $S_N(x) \ll \phi_{N-1}(\sqrt{N}x)$ by \eqref{eq: SN def}. On the other hand $\alpha_N(x) \sim  \frac{(2N)^{1/4}}{2} \phi_{N-1}(\sqrt{N}x)$ by \eqref{eq: alpha def} and \eqref{integral hermite fcn}, so \eqref{eq: B est} holds also for odd $N$.

We now derive an estimate for $\phi_{N-1} (\sqrt{N}x)$  from the estimate \eqref{outside asymptotics} for $\phi_{N} (\sqrt{N}x)$. To this end define the function $F(s)=s^{-1}\Phi(\sqrt{s} x)$.  Then, by the mean value theorem, there exists $\alpha\in[1,\frac{N}{N-1}]$ such that  $$\frac{1}{N-1}F'(\alpha)=F\left(\frac{N}{N-1}\right)-F(1)=\frac{N-1}{N}\Phi\left(\sqrt{\frac{N}{N-1}}\, x\right)-\Phi(x).$$
We note that $$F'(\alpha)=-\alpha^{-2}\ln{\left(\frac{\sqrt{\alpha}x+\sqrt{\alpha x^2-2}}{\sqrt{2}}\right)}=-(1+o(1))\ln{\left(\frac{x+\sqrt{ x^2-2}}{\sqrt{2}}\right)},$$
where $o(1)$ converges to $0$ as $\alpha\rightarrow 1$ or equivalently $N\to\infty$  uniformly for $x>\sqrt{2}(1+\delta)$. Since, $(|x+\sqrt{2}|^{1/2}+|x-\sqrt{2}|^{1/2})^2=2(x+\sqrt{x^2-2})$, we obtain $$g(x)=\frac{|x+\sqrt{2}|^{1/2}+|x-2|^{1/2}}{(x^2-2)^{1/4}}=\frac{\sqrt{2(x+\sqrt{x^2-2})}}{(x^2-2)^{1/4}}.$$
 Hence, by \eqref{outside asymptotics},
\aln{\label{eq:phiNminus1assymp}
\phi_{N-1}
(\sqrt{N}x)&=\phi_{N-1}
\left(\sqrt{N-1} \sqrt{\frac{N}{N-1}}x\right)\notag\\
&\sim \frac{e^{(N-1)\Phi(\sqrt{\frac{N}{N-1}}x)}g(x)
}{\sqrt{4\pi \sqrt{2(N-1)}}}\notag\\
&= \frac{e^{N\Phi(x)+
\frac{N}{N-1}F'(\alpha)}g(x)}{\sqrt{4\pi \sqrt{2(N-1)}}}= \frac{e^{N\Phi(x)}}{\sqrt{\pi \sqrt{2N}}(x^2-2)^{\frac 1 4}(x+\sqrt{x^2-2})^{\frac{1}{2}+o(1)}}.
}
From this we immediately obtain
\al{
B_N(x)&= \frac{(1+o(1)) e^{N\Phi(x)}}{2\sqrt{\pi}(x^2-2)^{1/4}(|x|+\sqrt{x^2-2})^{\frac 1 2 +o(1)}}\\
&= \frac{e^{N\Phi(x)}}{2\sqrt{\pi}(x^2-2)^{1/4}(|x|+\sqrt{x^2-2})^{\frac 1 2 +o(1)}}.
}
Recalling \eqref{eq: rho A B} it thus only remains to show that $A_N(x) =  o(B_N(x))$. By applying
 \eqref{outside asymptotics} for $\phi_{N-1},\phi_{N}, \phi_{N+1}$ we get from \eqref{eq: A def}
$$A_N(x)=N\phi_N(\sqrt{N}x)^2-\sqrt{N(N+1)}\phi_{N-1}(\sqrt{N}x)\phi_{N+1}(\sqrt{N}x)=O(N^2 e^{2N\Phi(x)}),$$
which implies $A_N(x) = o(B_N(x))$, since $\Phi(x) \le -\varepsilon/2$ for $x \ge \sqrt{2} + \delta$.
\end{proof}

We next move to the region $[\sqrt{2}+N^{-4/7},\sqrt{2}+\delta)$. The argument is essentially the same as for $[\sqrt{2}(1+\delta),\infty)$ except for using \eqref{middle asymptotics} instead of \eqref{outside asymptotics}.

\begin{proof}[Proof of \eqref{Upper_bound_of_rho_N 2} for $x\in  [\sqrt{2}+N^{-4/7},\sqrt{2}+\delta)$.]
By \eqref{middle asymptotics} and \eqref{asymptotics of Airy infty}, we obtain
\aln{\label{aymp herm fcn middle}
\phi_N(\sqrt{N} x)=  \exp\left(-\frac{2^{7/4}}{3} N(x-\sqrt{2})^{3/2}\hat{f}_N(x)^{3/2}+O(\ln(N))\right)
}
uniformly on $\sqrt{2}(1+N^{-4/7}) \leq |x|\leq \sqrt{2}(1+\delta)$.
Hence, together with \eqref{(4.16)}, it is easy to see  that $\int_{|x|}^\infty \phi_N(\sqrt{N}t)\dd t$ decays faster than any polynomial of $N$.

Hence, for $N$ even, by \eqref{integral hermite fcn}, we have
$$J_N(x)=\sqrt{N}\left(\int_0^{\infty} \phi(\sqrt{N} x) \dd x-\int_{|x|}^\infty \phi(\sqrt{N} x) \dd x\right)\sim (2N)^{-1/4},$$
as in the proof for $x \in [\sqrt{2}+\delta,\infty)$ above,
so that still
\begin{equation}\label{eq: B_N est 2}
    B_N(x) \sim \frac{(2N)^{1/4}}{2} \phi_{N-1}(x).
\end{equation}
For $N$ odd we obtain $\alpha_N(x) \sim \frac{(2N)^{1/4}}{2} \phi_{N-1}(x)$ by \eqref{eq: alpha def} and \eqref{integral hermite fcn}. Also since \eqref{eq: JN def} gives that $|J_N(x)|=\left|\sqrt{N}\int_{|x|}^\infty \phi_N(\sqrt{N} t) \dd t\right|$ decays faster than any polynomial of $N$ we obtain from \eqref{eq: SN def} that $S_N(x) =o(\alpha_N(x))$, by \eqref{integral hermite fcn}, so that \eqref{eq: B_N est 2} holds also for odd $N$.

 By \eqref{eq: A def} and  \eqref{aymp herm fcn middle} we get \al{
 |A_N(x)|&=\exp\left(-\frac{2^{11/4}}{3} N(x-\sqrt{2})^{3/2}\hat{f}_N(x)^{3/2}+O(\ln(N))\right)\\ &\ll B_{N}(x)
 =\exp\left(-\frac{2^{7/4}}{3} B_N(x-\sqrt{2})^{3/2}\hat{f}_N(x)^{3/2}+O(\ln(N))\right) \to 0.}
 Putting things together with \eqref{choice of delta},  we have
$$e^{-N\e/2}\leq  \frac{1}{2} N^{-1/2}B_{N}(x)\leq \rho_N(x)\leq 2N^{-1/2}B_{N}(x)\leq 1\leq e^{N\e/2},$$
concluding the proof.
\end{proof}

\begin{proof}[Proof of \eqref{Upper_bound_of_rho_N 2} for $x\in  [0,\sqrt{2}-\delta)$.]
It holds that $N^{-1/2} A_N(x) = e^{O(1)}$ uniformly in this interval by \eqref{sc asymptotics}. Using \eqref{inside asymptotics} as well as  \eqref{integral hermite fcn} and \eqref{estimate JN}, we obtain $S_N=O(1)$ and $\alpha_N(x)=O(1)$, so that $N^{-1/2}B_N(x) = o(1)$, which gives the claim.
\end{proof}

\begin{proof}[Proof of \eqref{Upper_bound_of_rho_N 2} for $x\in  [\sqrt{2}-\delta,\sqrt{2}-N^{-4/7})$.] By \eqref{middle asymptotics} and \eqref{asymptotics of Airy -infty}, we have 
\aln{\label{upper hermite}
  \phi_{n}(\sqrt{N} x)=O(N^{-1/4} (x-\sqrt{2})^{-1/4} ) = o(1),\,\text{ for }n\in\{N-1,N,N+1\},
}
uniformly on $x\in [\sqrt{2}(1-\delta),   \sqrt{2}(1-N^{-4/7})]$.
Hence $|A_N(x)|,\,|S_N(x)|,\,|\alpha_N(x)|=O(N^2)$ by \eqref{eq: A def}, \eqref{eq: SN def}, \eqref{estimate JN}, \eqref{eq: alpha def} and \eqref{integral hermite fcn}. The upper bound follows directly.

For the lower bound we use that
\al{
  A_N(x) \geq \sum_{k=\lf \frac{x^2 N}{2}\rf}^{\lf \frac{x^2 N}{2}+N^{\frac{1}{3}-\frac{1}{84}}\rf} \phi_k(\sqrt{N}x)^2
}
since $ \frac{x^2 N}{2}+N^{\frac{1}{3}-\frac{1}{84}}\leq N$.
For
$k\in \{\lf \frac{x^2 N}{2}\rf,\cdots, \lf \frac{x^2 N}{2}+N^{\frac{1}{3}-\frac{1}{84}}\rf\}$ we have
$$\sqrt{\frac{N}{k}}\, x = \sqrt{ \frac{Nx^2}{Nx^2/2 + O(N^{1/3-1/84})}} =\sqrt{2}+O(N^{-\frac{2}{3}-\frac{1}{84}}),$$ which implies for $f_k$ from \eqref{middle asymptotics} that $f_k\left(\sqrt{\frac{N}{k}}\, x\right)=o(1)$, and therefore by that estimate and since ${\rm Ai}(0)>0$ as well as
${\rm Ai}'(0)<0$
we have
\al{
\phi_k(\sqrt{N} x) = \phi_k\left(\sqrt{k}\,\left(\sqrt{\frac N k}\, x\right)\right)
\geq  c' N^{-\frac{1}{12}} {\rm Ai}\left(f_k\left(\sqrt{\frac{N}{k}}x\right)\right)\geq c'' N^{-\frac{1}{12}}, }
with some constants $c',c''>0$. Since $ \frac{x^2 N}{2}+N^{\frac{1}{3}-\frac{1}{84}}\leq N$ this implies
\al{
  A_N(x)\ge N^{\frac{1}{3}-\frac 1 {84}} N^{-\frac{1}{6}}=c N^{\frac{13}{84}},
}
for some $c>0$.
Furthermore $S_N(x)=O(N^{1/7})$ in this interval by \eqref{eq: SN def}, \eqref{estimate JN} and \eqref{upper hermite}. Similarly $\alpha_N(x)=O(N^{1/7})$ by \eqref{eq: alpha def}, \eqref{integral hermite fcn} and \eqref{upper hermite}. Since $N^{\frac{13}{84}}\gg N^{\frac 1 7}$, we obtain that $|S_N(x)|,|\alpha_N(x)|\ll A_N(x)$ uniformly on $x\in \mathcal{I}_N$ and hence,
\begin{equation}\label{eq: rho lb}
    \rho_N(x) \geq N^{-1} \text{ for } N \text{ large enough and } x\in [\sqrt{2}(1-\delta),\sqrt{2}(1-N^{-4/7})].
\end{equation}
\end{proof}

\begin{proof}[Proof of \eqref{Upper_bound_of_rho_N 2} for $x\in  [\sqrt{2}-N^{-4/7},\sqrt{2}-N^{-4/7})$.]
We write $a_N=N^{-4/7}$ and consider the region $x\in [\sqrt{2}(1-a_N),\sqrt{2}(1+a_N)]$. Since for $f_N$ as in \eqref{middle asymptotics} it holds $f_N(x)= O(N^c)$, we have  $|{\rm Ai}(f_N(x))|,\,|{\rm Ai}'(f_N(x))|=O(N^c)$ ,  uniformly for $x\in[\sqrt{2}(1-a_N),\sqrt{2}(1+a_N)]$ by \eqref{asymptotics of Airy infty} and \eqref{asymptotics of Airy -infty}. Hence $\phi(\sqrt{N} x)= O(N^c)$ uniformly using \eqref{middle asymptotics}. This implies that $|A_N(x)|,\,|S_N(x)|,\,|\alpha_N(x)|= O(N^c)$ using \eqref{eq: A def}, \eqref{eq: SN def}, \eqref{estimate JN}, \eqref{eq: alpha def}, \eqref{integral hermite fcn}. Therefore,
$\rho_N(x) \leq e^{N\e/2}$,
uniformly on $x\in [\sqrt{2}-a_N,\sqrt{2}+a_N]$ and for $N$ large enough by \eqref{eq: rho A B}. This gives the upper bound bound.

To obtain the lower bound we compare $\rho_N(x)$ to $\rho_N(y)$ for $y \le \sqrt{2} - N^{-4/7}$ (a case already covered above) as follows. From \eqref{explicit formula ev} and \eqref{rhoN formula} we have
\begin{equation}
   \rho_N(x) =\frac{1}{N}\sum_{j=1}^N \frac{N!}{Z_{N}} \int_{\mathbf{T}_{N}}  \delta(x_j-x) e^{-\frac{N}{2}\sum_{i=1}^N
      x_i^2}\,\Delta_N({\bf{x}})\,\prod_{i=1}^N \dd x_i.
\end{equation}
For any $z$ we can employ the change of variable $x_i\to x_i+x-z$ to obtain
\begin{equation}
   \rho_N(x)= \frac{(N-1)!}{Z_{N}}\sum_{j=1}^N \int_{\mathbf{T}_{N}}  \delta(x_j-z) e^{-\frac{N}{2}\sum_{i=1}^N
      (x_i+x-z)^2}\,\Delta_N({\bf{x}})\,\prod_{i=1}^N \dd x_i,
\end{equation}
noting that $\Delta_N((x_i))=\Delta_N((x_i+x-z))$. Integrating both sides over $z$ this gives
\begin{align*}
   \rho_N(x)&= (\sqrt{2}a_N)^{-1} \frac{(N-1)!}{Z_{N}}\sum_{j=1}^N \int_{\sqrt{2}(1-2a_N)}^{\sqrt{2}(1-a_N)}\dd z \int_{\mathbf{T}_{N}}  \delta(x_j-z) e^{-\frac{N}{2}\sum_{i=1}^N
      (x_i+x-z)^2}\,\Delta_N({\bf{x}})\,\prod_{i=1}^N \dd x_i\\
      &\geq \frac{(N-1)!}{ Z_{N}}\sum_{j=1}^N  \int_{\sqrt{2}(1-2a_N)}^{\sqrt{2}(1-a_N)}\dd z \\
      &\qquad\qquad\times \int_{\mathbf{T}_{N}} \delta(x_j-z) e^{-\frac{N}{2}\sum_{i=1}^N
     x_i^2- N(x-z)\sum x_i-\frac{N^2}{2}(x-z)^2 }\,\Delta_N({\bf{x}})\prod_{i=1}^N \dd x_i,
\end{align*}
Let $D_N = \left\{(x_i)_{i=1}^N:~\left|\sum_{i=1}^N x_i\right|\leq N a_N\right\}$.  Since $x\in [\sqrt{2}-a_N,\sqrt{2}+a_N]$ we have $|z-x|\leq 4 a_N$. Hence we can further bound from below by
\begin{align*}
      &\quad  e^{-c N^2 a_N^2} \frac{(N-1)!}{Z_{N}}\sum_{j=1}^N \int_{\sqrt{2}(1-2a_N)}^{\sqrt{2}(1-a_N)}\dd z  \int_{\mathbf{D}_N\cap \mathbf{T}_{N}}  \delta(x_j-z) e^{-\frac{N}{2}\sum_{i=1}^N
      x_i^2 }\,\Delta_N({\bf{x}})\, \prod_{i=1}^N \dd x_i\\
      &=  e^{-c N^2 a_N^2}\mathbb{E}\left[\frac 1 N \#\left\{i\in\{1,\cdots,N\}:\lambda^N_i\in[\sqrt{2}(1-2a_N),\sqrt{2}(1-a_N)]\right\}\mathbf{1}_{D_N}(\lambda^N)\right]\\
      &\geq e^{-c N^2 a_N^2}\left(  \int_{\sqrt{2}(1-2a_N)}^{\sqrt{2}(1-a_N)} \rho_N(z)\dd z-   \P\left(\left|\sum_{i=1}^N\lambda_i^N\right|> N a_N\right)\right),
\end{align*}
where  $\lambda^N=(\lambda_i^N)_{i=1}^N$ is the eigenvalues of ${\rm GOE}_N(N^{-1})$ as before. Using \eqref{eq: rho lb} we then have
$$\int_{\sqrt{2}(1-2a_N)}^{\sqrt{2}(1-a_N)} \rho_N(z)\dd z\geq  N^{-1} (\sqrt{2}a_N)\geq N^{-2}.$$
On the other hand, for ${\rm GOE}_N(N^{-1})=(A_{ij})_{1\leq ij\leq N}$,
\al{
   \P\left(\left|\sum_{i=1}^N\lambda_i^N\right|> N a_N\right)&= \P\left(\left|\sum_{i=1}^N A_{ii}\right|> N a_N\right)= O\left(e^{-\frac{N^2 a_N^2}{2}}\right),
}
since $(A_{ii})$ are centered Gaussian random variables of variance $N^{-1}$.
Therefore, we obtain
$$\rho_N(x)\geq \frac{1}{2 N^{2}} e^{-c N^2 a_N^2}=  \frac{1}{2 N^{2}} e^{-c N^{\frac{6}{7}}}\geq e^{-\e N}.$$
\end{proof}

\section{Covariances of the Hamiltonian}
The next lemma gives the covariances of the Hamiltonian $H_N$ (without
  the external field). For its proof see \cite[Lemma~1]{AB13} or
\cite[Appendix~A]{arous2020geometry}.

\begin{lemma}
  \label{lem:covariances_of_HN}
  For $1\leq i\leq j\leq N-1$, $1\leq \ell\leq k\leq N-1$ and $\sigma\in
  S_{N-1}$, we have:
  \begin{align*}
    \E[H_N(\sigma)H_N(\sigma)]&=N,\\
    \E[\partial_i H_N(\sigma)H_N(\sigma)]&=0,\\
    \E[\partial_{ij} H_N(\sigma)H_N(\sigma)]&=0,\\
    \E[\partial_i H_N(\sigma) \partial_\ell H_N(\sigma)]&=N\,\xipone\,\delta_{i\ell},\\
    \E[\partial_{ij} H_N(\sigma) \partial_\ell H_N(\sigma)]&=0,\\
    \E[\partial_{ij} H_N(\sigma)\partial_{\ell k}
      H_N(\sigma)]&=N\,\xione\,\delta_{i\ell}\delta_{j k}(1+\delta_{ij}),\\
    \E[\dN H_N(\sigma)\dN H_N(\sigma)]&=N\,(\xipone+\xippone),\\
    \E[H_N(\sigma)\dN H_N(\sigma)]&=N\,\xipone,\\
    \E[\partial_i H_N(\sigma)\dN H_N(\sigma)]&=0,\\
    \E[\partial_{ij} H_N(\sigma) \dN H_N(\sigma)]&=0.
  \end{align*}
\end{lemma}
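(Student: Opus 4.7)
The plan is to compute every covariance in the lemma by differentiating the bivariate covariance kernel $K(x,y) = N\xi(x\cdot y)$ and then evaluating at $x = y = \sigma$ in the adapted orthonormal frame. Since $H_N$ is almost surely smooth on the open ball $\{|\sigma| < \sqrt{\radconv}\}$ (as noted below \eqref{eq:covariance_H_N}), standard results for smooth Gaussian fields give
\begin{equation*}
\E\bigl[\partial^\alpha_x H_N(x)\,\partial^\beta_y H_N(y)\bigr] = \partial^\alpha_x \partial^\beta_y K(x,y)
\end{equation*}
for any multi-indices $\alpha,\beta$. The derivatives $\partial_i$ appearing in the lemma are directional derivatives along $\mathbf{e}_i(\sigma)$; since $\sigma$ is fixed throughout, I would freeze the frame $(\mathbf{e}_i(\sigma))_{i=1}^N$ as a constant orthonormal basis of $\mathbb R^N$ and treat each $\partial_i$ as a constant-coefficient directional derivative, so that the displayed identity applies verbatim after expressing $K$ in this basis.

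Writing $s = x\cdot y$ and using the chain rule, the relevant partials of $\xi(x\cdot y)$ are
\begin{align*}
\partial_{x_i}\xi(s) &= \xi'(s)\, y_i, \\
\partial_{x_i}\partial_{x_j}\xi(s) &= \xi''(s)\, y_i y_j, \\
\partial_{x_i}\partial_{y_\ell}\xi(s) &= \xi''(s)\, x_\ell y_i + \xi'(s)\,\delta_{i\ell}, \\
\partial_{x_i}\partial_{x_j}\partial_{y_\ell}\xi(s) &= \xi'''(s)\, x_\ell y_i y_j + \xi''(s)(\delta_{i\ell} y_j + \delta_{j\ell} y_i), \\
\partial_{x_i}\partial_{x_j}\partial_{y_\ell}\partial_{y_k}\xi(s) &= \xi^{(4)}(s)\, x_\ell x_k y_i y_j + \xi'''(s)\, T_{ij\ell k}(x,y) + \xi''(s)(\delta_{i\ell}\delta_{jk} + \delta_{ik}\delta_{j\ell}),
\end{align*}
where $T_{ij\ell k}$ collects four cross terms, each of which carries at least one factor $x_m$ or $y_m$.

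Finally I would specialize to $x = y = \sigma$ and use that in the chosen frame $\sigma$ has components $\sigma_m = \delta_{mN}$. For the identities in the lemma with all free indices in $\{1,\ldots,N-1\}$, every factor $x_m$ or $y_m$ appearing above vanishes and only the pure Kronecker terms survive: this yields $\E[\partial_i H_N(\sigma)\partial_\ell H_N(\sigma)] = N\xipone\,\delta_{i\ell}$, $\E[\partial_{ij}H_N(\sigma)\partial_\ell H_N(\sigma)] = 0$, and $\E[\partial_{ij}H_N(\sigma)\partial_{\ell k}H_N(\sigma)] = N\xippone(\delta_{i\ell}\delta_{jk}+\delta_{ik}\delta_{j\ell})$, which on the ordered index range $i\le j$, $\ell\le k$ collapses to $N\xippone\,\delta_{i\ell}\delta_{jk}(1+\delta_{ij})$. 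For the identities involving $\dN = \partial_N$, plugging in $\sigma_N = 1$ gives $\E[\dN H_N(\sigma)H_N(\sigma)] = N\xipone$ and $\E[\dN H_N(\sigma)\dN H_N(\sigma)] = N(\xipone+\xippone)$, while $\E[\partial_i H_N(\sigma)\dN H_N(\sigma)] = 0$ and $\E[\partial_{ij}H_N(\sigma)\dN H_N(\sigma)] = 0$ because the surviving terms still carry a vanishing factor $\sigma_i$ or $\sigma_i\sigma_j$; finally $\E[H_N(\sigma)H_N(\sigma)] = K(\sigma,\sigma) = N\xione$. The computation is entirely mechanical and I anticipate no serious obstacle; the only bookkeeping that requires attention is the combinatorics of the Kronecker deltas in the four-fold derivative, together with the reduction of $\delta_{i\ell}\delta_{jk}+\delta_{ik}\delta_{j\ell}$ to $\delta_{i\ell}\delta_{jk}(1+\delta_{ij})$ on the ordered index range.
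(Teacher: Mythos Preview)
Your proposal is correct and is essentially the same approach as the paper's: the paper's proof is the one-line citation ``Use \cite[(5.5.4)]{adler2009random} with $H=H_N$ and $C(\sigma,\tau)=\xi(\sigma\cdot\tau)$'', which is exactly the interchange-of-expectation-and-differentiation identity you invoke, and your write-up simply carries out the chain-rule computation explicitly. Incidentally, your values $\E[H_N(\sigma)^2]=N\xione$ and $\E[\partial_{ij}H_N(\sigma)\partial_{\ell k}H_N(\sigma)]=N\xippone\,\delta_{i\ell}\delta_{jk}(1+\delta_{ij})$ disagree with the lemma as printed but are the correct ones, consistent with how the lemma is actually used in the paper (see Lemma~\ref{lem:law_of_grads}(c) and Lemma~\ref{lem:pE}); the printed $N$ and $\xione$ in those two lines are typos.
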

\begin{proof}
  Use \cite[(5.5.4)]{adler2009random} with $H=H_N$ and
  $C(\sigma,\tau)=\xi(\sigma\cdot \tau)$.
\end{proof}
\printbibliography
\end{document}